\documentclass[11pt]{amsart}
\usepackage{times}
\usepackage[T1]{fontenc}
\usepackage{amssymb, amsthm, amsmath}
\usepackage{mathrsfs}

\usepackage[active]{srcltx}

\usepackage{todonotes}

\usepackage{setspace}
\usepackage{geometry,soul}

\usepackage{hyperref}

\DeclareMathOperator{\acl}{acl}
 
\DeclareMathOperator{\gl}{GL} 
\DeclareMathOperator{\aut}{Aut} \DeclareMathOperator{\id}{Id}

\DeclareMathOperator{\eq}{eq}

\newtheorem{introtheorem}{Theorem}

\newtheorem{theorem}{Theorem}[section]

\newtheorem{claim}{Claim}[theorem]

\newtheorem{corollary}[theorem]{Corollary}

\newtheorem{fact}[theorem]{Fact}
\newtheorem{lemma}[theorem]{Lemma}

\newtheorem{proposition}[theorem]{Proposition}


\newtheorem*{gen-dif}{\fbox{{\large A}} \hypertarget{Agen-dif}{Gen-Dif}}





\newtheorem*{min-balln}{\fbox{{\large A}} \hypertarget{Amin-ball}{Cballs}}

\theoremstyle{definition}
\newtheorem{definition}[theorem]{Definition}
\newtheorem{example}[theorem]{Example}
\newtheorem{remark}[theorem]{Remark}

\newtheorem{notation}[theorem]{Notation}


\newcommand{\Nn}{{\mathbb{N}}}

\newcommand{\Zz}{{\mathbb {Z}}}

\newcommand{\m}{\textbf{m}}
\newcommand{\bk}{\textbf{k}}


\newcommand{\CK}{{\mathcal K}}
\newcommand{\CN}{{\mathcal N}}

\newcommand{\CM}{{\mathcal M}}

\newcommand{\CC}{{\mathcal C}}

\newcommand{\CO}{{\mathcal O}}

\newcommand{\CG}{{\mathcal G}}

\newcommand{\0}{\emptyset}

\newcommand{\rest}{\upharpoonright}

\renewcommand{\phi}{\varphi}

\newcommand{\ad}{\mathrm{Ad}}

\def\qp{\mathbb Q_p}

\def\dpr{\mathrm{dp\text{-}rk}}
\def\sub{\subseteq}

\newenvironment{claimproof}[1][\proofname]
  {%
    \proof[#1]%
  }
  {%
    \endproof%
  }

\title{Definably semisimple groups interpretable in $p$-adically closed fields}
\author{Yatir Halevi}
\address{Department of Mathematics\\ University of Haifa\\ 199 Abba Khoushy Avenue \\ Haifa \\Israel}
 \email{ybenarih@campus.haifa.ac.il}

\author{Assaf Hasson}
\address{Department of Mathematics, Ben Gurion University of the Negev, Be'er-Sheva 84105, Israel}
\email{hassonas@math.bgu.ac.il}

\author{Ya'acov Peterzil}
\address{Department of Mathematics, University of Haifa, Haifa, Israel}
\email{kobi@math.haifa.ac.il}

\date{\today}

\oddsidemargin .8cm
\evensidemargin .8cm

\setcounter{tocdepth}{1}

\begin{document}

\thanks{The first author was partially supported by ISF grant No. 555/21 and 290/19. The second author was supported by ISF grant No. 555/21. The third author was supported by ISF grant No. 290/19.}

\begin{abstract}
    Let $K$ be a $p$-adically closed field and $G$ a group interpretable in $K$. We show that if $G$ is definably semisimple (i.e. $G$ has no definable infinite normal abelian subgroups) then there  exists a finite normal subgroup $H$ such that $G/H$ is definably isomorphic to a $K$-linear group. The result remains true in models of $\mathrm{Th}(\mathbb{Q}_p^{an})$.
 
\end{abstract}

\maketitle

\section{Introduction}

A $p$-adic semi-algebraic set is a finite boolean combination of sets of the form $\{x \in K^n: \exists y f(x)=y^m\}$ for $m\in \Nn$, $m\ge 2$ and $f\in \qp[x]$. By Macintyre's theorem, \cite{Macp-adic}, these are exactly the definable sets in the valued field $\mathbb Q_p$. Unlike the situation over the reals,  the class of $p$-adic semi-algerbaic sets is not closed under quotients by semi-algerbaic equivalence relations. For example, neither the value group, $\Gamma=K^\times /\CO^\times $, nor the set of closed $0$-balls, $K/\CO$, is semi-algebraic. A set obtained as such a quotient by a definable equivalence relation is called {\em interpretable}.

In \cite{HaHaPeVF} we showed, among others, that an  infinite field interpretable in a $p$-adically closed field $\CK$ is definably isomorphic to a finite extension of $K$. In particular, it follows that every infinite field interpretable in $\CK$ is, in fact, definable (and can be realised as a subfield of the ring $M_n(K)$ for a suitable $n$). In the present paper, we prove an analogous result for definably semisimple groups interpretable in $\CK$. Our main result is: 

\begin{introtheorem}\label{T: intro1}
Let $\CK$ be a  $p$-adically closed field  and let $G$ be an interpretable  definably semisimple group in $\CK$. Then there exists a finite normal subgroup  $H\trianglelefteq G$, defined over the same parameters as $G$, such that  $G/H$
is definably isomorphic to a definable $K$-linear group. 
\end{introtheorem}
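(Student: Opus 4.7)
The plan is to follow the strategy used for semisimple groups in the o-minimal setting (Peterzil--Pillay--Starchenko) and to produce a definable linear representation $\rho \colon G \to \gl_n(K)$ whose kernel is finite. Given such a $\rho$, setting $H := \ker \rho$ proves the theorem, since the definable semisimplicity hypothesis forbids any infinite normal abelian---in particular central---subgroup of $G$. Concretely, the representation one aims to construct is the adjoint representation of $G$ on its Lie algebra.

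The first and hardest step is to equip $G$ with a $p$-adic analytic-manifold structure defined in $\CK$. A priori $G$ is only interpretable, so one has to realize a neighborhood of the identity as a definable subset of some $K^n$ on which multiplication and inversion become $p$-adic analytic. The natural way is to apply the technology of the authors' earlier paper on interpretable fields (where an interpretable infinite field is shown to be definable and isomorphic to a finite extension of $K$), together with a detailed analysis of the interpretable quotients such as $\Gamma$ and $K/\CO$, in order to build definable coordinate charts near $e$; the $\mathbb{Q}_p^{an}$-extension of the result is natural because this is where the required local analytic structure is manifestly available. Once this is done, the Lie algebra $\mathfrak{g} := T_e G$, with bracket coming from the adjoint action, is a well-defined $K$-vector space of dimension $n := \dim G$.

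Next, $\mathfrak{g}$ must be shown to be semisimple. Any nonzero abelian ideal $\mathfrak{a} \trianglelefteq \mathfrak{g}$ would integrate, via a definable local exponential near $e$, to a definable abelian subgroup of $G$; closing under left translates and $G$-conjugation then produces a global infinite definable normal abelian subgroup, contradicting the hypothesis. With $\mathfrak{g}$ semisimple, the adjoint representation $\ad \colon G \to \gl(\mathfrak{g}) \cong \gl_n(K)$ is a definable homomorphism whose kernel is $Z(G)$; being a definable normal abelian subgroup, $Z(G)$ must be finite by the hypothesis. Setting $H := Z(G)$---which is defined over the same parameters as $G$, since the center is $\emptyset$-definable from $G$---yields the required finite normal subgroup with $G/H$ definably embedded in $\gl_n(K)$. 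The principal obstacle throughout is the first stage: equipping an interpretable group with a definable $p$-adic analytic structure is exactly where the $p$-adic imaginaries machinery (and the analytic expansion in the $\mathbb{Q}_p^{an}$ case) must do most of the work, while the remaining Lie-theoretic steps proceed in close analogy with the real case.
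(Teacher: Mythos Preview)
Your high-level plan---use the adjoint representation and set $H=\ker(\ad)$---is exactly what the paper does. But two of your key steps fail in the $p$-adic setting, and you have misidentified where the real difficulty lies.

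\textbf{The claim $\ker(\ad)=Z(G)$ is false.} This equality holds for \emph{connected} real Lie groups because a conjugation whose derivative at $e$ is the identity must fix a neighbourhood of $e$, and connectedness propagates this globally. Over $p$-adically closed fields groups are totally disconnected, and the paper explicitly cites a recent example of Gl\"ockner of a $1$-dimensional $p$-adic Lie group with $\ker(\ad)=G$ yet no open normal abelian subgroup. So one cannot conclude that $\ker(\ad)$ is abelian (hence finite) by identifying it with the centre. The paper instead proves a weaker, more delicate statement: there is a finite-index normal $G_1\trianglelefteq G$ with $\ker(\ad\restriction G_1)\le Z(G_1)$. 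This uses that if $D_e(\tau_g)=\id$ then $\tau_g$ fixes an \emph{open} neighbourhood of $e$ (a local Lie-theoretic fact transferred from $\mathbb F$), whence $\dim C_G(g)=\dim G$; then---and this is where $K$-purity is essential---one concludes $[G:C_G(g)]<\infty$ and applies Baldwin--Saxl.

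\textbf{The ``abelian ideal integrates to a normal abelian subgroup'' step is also broken}, for the same disconnectedness reason. An abelian ideal $\mathfrak a\trianglelefteq\mathfrak g$ gives at best a \emph{local} abelian subgroup; closing it under $G$-conjugation has no reason to preserve commutativity when $G$ is not connected. The paper never attempts to show $\mathfrak g$ is semisimple and does not need this.

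\textbf{You have the hard step in the wrong place.} Obtaining local charts at $e$ is not the obstacle: Johnson's admissible topology already gives, for any interpretable $G$, a definable local homeomorphism with an open subset of $K^{\dim G}$. The genuine work is ruling out infinite $0$-dimensional pieces of $G$ coming from $\Gamma$ or $K/\CO$---that is, proving $G$ is $K$-pure. This is the content of the paper's Theorem \ref{T: main into}(2): if $G$ is locally (almost) strongly internal to $\Gamma$ or $K/\CO$ one constructs, by separate arguments in each sort, an infinite definable normal abelian subgroup, contradicting semisimplicity. Only after $K$-purity is established does the $\dim C_G(g)=\dim G\Rightarrow [G:C_G(g)]<\infty$ implication hold (Lemma \ref{l: pure 0dim}), and this is precisely what makes $\ker(\ad)$ finite.
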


In this paper, by a definably semisimple group, we mean an infinite group admitting no infinite definable normal abelian subgroups.  

We have been informed by J. Gismatullin,  I. Halupczok and D. Macpherson that in a recent unpublished work they characterise simple groups \emph{definable}  in certain henselian valued fields (including $p$-adically closed fields and their $1$-h-minimal analytic expansions).  Their work seems to combine with the present one to characterise definably simple groups interpretable in $p$-adically closed fields.

Observe that while the theory of $p$-adically closed fields has uniform finiteness for definable families of subsets of the valued field sort, the same is not true for the other sorts.  E.g., neither the value group, $\Gamma$, nor the sort of closed $0$-balls, $K/\CO$, has uniform finiteness. In this situation the notion of definable semisimplicity need not a-priori be elementary. It is one of the consequences of the present work, Corollary \ref{C: ss is fo}, that if $G$ is an interpretable definably semisimple group, in a model $\CK$ (namely the group $G(\CK)$ has no definable infinite normal abelian subgroups) then $G$ is definably semisimple in any elementary extension of $\CK$.

The proof of Theorem \ref{T: intro1} is a case analysis based on new invariants associated with interpretable groups, introduced in \cite{HaHaPeGps}, as well as some tools from Johnson's recent work, \cite{JohnTopQp}, on the topology associated with such groups. Elimination of imaginaries in $p$-adically closed fields was proved in \cite{HrMarRid}, however, as in both \cite{HaHaPeGps} and \cite{JohnTopQp}, the proof here avoids the general theory of elimination of imaginaries. 

Recall that in \cite[Lemma 7.10]{HaHaPeGps} we have shown that any set $X$ interpretable in $K$ is locally almost strongly internal to either $K$, $K/\CO$ or to $\Gamma$. I.e.,  there is an infinite definable $Y\sub X$ (strictly speaking, $Y$ is also interpretable) and a definable finite-to-one function from $Y$ into $D^n$ (some $n$) where $D$ is one of the above three sorts. We say that $X$ is {\em $D$-pure} if $D$ is the unique such sort, in every elementary extension of $\CK$.

The key ingredient (Theorem \ref{T:johnson question} below) in the proof of our main theorem is the following:
\begin{introtheorem} \label{T: main into}
    Let $G$ be an infinite group interpretable in a  $p$-adically closed field  $\CK$. 
    \begin{enumerate}
        \item Assume that  $G$ is $K$-pure and locally abelian with respect to Johnson's admissible topology (see Section \ref{ss: topologies}), namely there exists an open neighborhood of $e$ where all elements commute. Then there exists a definable abelian  normal $G_1\trianglelefteq G$ of finite index.
        \item If $G$ is not $K$-pure  then there exists a definable infinite normal abelian subgroup $N\trianglelefteq G$.
    \end{enumerate}
    In any case, if $G$ is locally abelian then it contains a definable infinite normal abelian subgroup.
\end{introtheorem}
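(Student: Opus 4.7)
The ``in any case'' conclusion is immediate from parts~(1) and~(2): if $G$ is $K$-pure and locally abelian, part~(1) yields a definable normal abelian $G_1\trianglelefteq G$ of finite index, which is infinite since $G$ is; if $G$ is not $K$-pure, part~(2) provides the desired subgroup directly, with no need of local abelianity. So I focus on (1) and (2).

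For part~(1), the natural plan is an iterated-centralizer construction. Let $U$ be the open neighborhood of $e$ furnished by local abelianity, on which all pairs commute. Set $C:=Z_G(U)$; as $U$ is abelian each $u\in U$ lies in $C$, so $U\subseteq C$, and hence $C$ is an open definable subgroup. The key observation is that any $u\in U$ commutes with every $c\in C$ (by the very definition of $C$) and also lies in $C$ itself, so $U\subseteq Z(C)$. Consequently $A:=Z(C)$ is a definable abelian subgroup containing the open set $U$, hence itself open. The hard step is to upgrade ``open'' to ``of finite index'' for $A$: this should follow by combining $K$-purity with Johnson's admissible topology framework from \cite{JohnTopQp} (cf.~Section~\ref{ss: topologies}), which equips $G$ with a definable $p$-adic manifold structure and, crucially, with a definable bounded open subgroup $G_0$ of finite index. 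Inside the compact-like $G_0$ the open subgroup $A\cap G_0$ has finite index, and hence so does $A$ in $G$. Finally, the normal core $\bigcap_{g\in G}gAg^{-1}$ is a finite intersection of conjugates of $A$ and is therefore a definable, abelian, normal subgroup of finite index, as required.

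For part~(2), I plan to exploit the non-$K$-pure hypothesis via \cite[Lemma 7.10]{HaHaPeGps}: passing to a sufficiently saturated elementary extension if needed, there is an infinite definable subset $Y\subseteq G$ locally almost strongly internal to $\Gamma$ or to $K/\CO$. Using the interpretable-group invariants of \cite{HaHaPeGps} together with the descending chain condition on definable subgroups provided by dp-rank, the plan is to extract a canonical definable infinite subgroup $N\le G$ carrying the same $\Gamma$- or $K/\CO$-internal structure. Since $\Gamma$ is an ordered abelian group and $K/\CO$ is built from the abelian additive group $(K,+)$, any definable group locally almost strongly internal to one of these sorts is itself abelian. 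Normality is obtained either from the canonicity of the construction, or by passing to the normal core, which is again internal of the same type and therefore abelian.

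The main obstacle in (1) is establishing finite index for the open abelian subgroup $A$; this requires the full strength of Johnson's admissible-topology results together with $K$-purity (mere openness being too weak in the $p$-adic setting). In (2), the principal difficulty is promoting an infinite definable subset with $\Gamma$- or $K/\CO$-internality into an honest definable subgroup while preserving that internality, and then ensuring normality in $G$.
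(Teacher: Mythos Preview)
Your double-centralizer construction in part~(1) is essentially the paper's own argument, but your justification for the ``hard step'' is wrong. There is no result in \cite{JohnTopQp} providing a ``definable bounded open subgroup $G_0$ of finite index'' in an arbitrary interpretable group, and no ``compact-like'' behaviour is available: already $(K,+)$ has no proper open subgroup of finite index. The correct mechanism uses $K$-purity \emph{directly}. For $u\in U$ we have $U\subseteq C_G(u)$, so $\dim C_G(u)=\dim G$, whence the conjugacy class $u^G\cong G/C_G(u)$ has dimension~$0$; by $K$-purity (Lemma~\ref{l: pure 0dim}) it is therefore finite, i.e.\ $[G:C_G(u)]<\infty$. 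Baldwin--Saxl (using NIP) then gives $[G:C_G(U)]<\infty$, and repeating the argument with $C_G(U)$ in place of $U$ yields $[G:C_G(C_G(U))]<\infty$, so your $A=Z(C_G(U))$ has finite index. This is precisely Corollary~\ref{C:dugaldetal} plus Fact~\ref{Baldwin Saxl}, applied twice; your compactness heuristic plays no role and cannot be salvaged.

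Part~(2) contains a genuine error: it is simply false that a definable group locally almost strongly internal to $\Gamma$ or $K/\CO$ must itself be abelian. The paper's own Example~\ref{E:counter}, $G=\CO^\times\ltimes K/\CO$, is locally strongly internal to $K/\CO$ and non-abelian; a semidirect product of $\Gamma$ by a finite group acting by $x\mapsto -x$ gives a $\Gamma$-example. Nor does ``passing to the normal core'' help, since there is no reason the core of an arbitrary internal subgroup should be infinite. The paper's actual proof is substantially harder and splits into cases. For $\Gamma$ (Proposition~\ref{P:not semisimple Gamma}) one shows that conjugation restricted to a suitable finite-index subgroup of $\Zz^n$ yields only finitely many homomorphisms, forcing a finite-index subgroup $G_1$ with $\nu_\Gamma\vdash Z(G_1)$. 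For $K/\CO$ (Proposition~\ref{P:not semisimple K/O}) one exploits the Pr\"ufer torsion structure of $K/\CO$ to build a normal abelian subgroup containing $\nu_{K/\CO}$. Neither argument resembles your plan, and your proposed route does not lead to a proof.
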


 In fact, our results are more precise than stated, as we can give non-trivial lower bounds on the dp-rank of the normal abelian subgroups obtained in clauses (1) and (2). 

The main results of  this paper go through, essentially unaltered, to models of $\mathrm{Th}(\qp^{an})$ (or, indeed, to models of analytic expansions of finite extensions of $\qp$. See \cite{DenvdDr} for details). For ease of exposition, and in order to avoid technicalities, we have opted to work in $p$-adically closed fields. This is discussed in more detail in the remarks concluding the paper. 

\vspace{.2cm}

\noindent{\bf Previous work} 
We note recent work on interpretable groups in $p$-adically closed fields, by Johnson, \cite{JohnTopQp}, also together with  Yao,  \cite{JohnYao}, \cite{JohnYaoAbelian},  and with Guerrero, \cite{JohnGue}.  Further work is needed in order to understand the relation between our methods and the model theoretic tools studied there,  such as definable compactness, finitely satisfiable generics (fsg), definable $f$-generics (dfg), etc.

\vspace{.2cm}

\noindent\emph{Acknowledgement} We would like to thank J. Gismatullin, I. Halupczok and D. Macpherson  for sharing with us their unpublished work on simple groups definable in certain henselian fields. We also thank D. Macpherson for several conversations and useful suggestions, and E. Sayag for directing us to some useful references.  

\section{Preliminaries}

\subsection{Notation, conventions and basic definitions}\label{ss:notation}
Throughout, structures are denoted by  calligraphic capital letters, $\CM$, $\CN$, $\CK$ etc., and their respective universes by the corresponding Latin letters, $M$, $N$ and $K$. 

Tuples from a structure $\CM$ are always assumed to be finite, and are denoted by small Roman characters $a,b,c,\dots$. We apply the standard model theoretic abuse of notation writing $a\in M$ for $a\in M^{|a|}$. Variables will be denoted $x,y,z,\dots$ with the same conventions as above. We do not distinguish notationally between tuples and variables belonging to different sort, unless some ambiguity can arise. Capital Roman letters $A,B,C,\ldots$ usually denote small subsets of parameters from $\CM$. As is standard in model theory, we write $Ab$ as a shorthand for $A\cup \{b\}$. In the context of definable groups we will, whenever confusion can arise, distinguish between, e.g., $Agh:=A\cup\{g,h\}$ and $A\,g\!\cdot \! h:=A\cup \{g\!\cdot\! h\}$. 

By a partial type we mean a consistent collection of formulas. Two partial types, $\rho_1, \rho_2$ are equal, denoted $\rho_1=\rho_2$, if they are logically equivalent, i.e., if they have the same realizations in some sufficiently saturated elementary extension.

A \emph{$p$-adically closed field} is a field elementarily equivalent to  $\qp$ or to a finite extension of $\qp$  in the  language of valued fields, see, e.g., \cite{PreRoq} for more information.
\textbf{We let  $\CK$ denote a $(2^{\aleph_0})^+$-saturated  $p$-adically closed field}, and $K$ will always be its  valued field sort. Unless specifically written otherwise, we will always work in $\CK^{\eq}$.  
{\bf Henceforth, by ``definable'' we mean ``definable in $\CK^{\eq}$ using parameters'', unless specifically mentioned otherwise}. In particular, we shall not use ``interpretable'' anymore. A more detailed review of standard definitions and notation can be found in \cite[\S 2]{HaHaPeGps}.

We use freely properties of dp-rank (such as sub-additivity, invariance under finite-to-finite correspondences, invariance under automorphisms etc.), see the preliminaries sections of \cite{HaHaPeVF},\cite{HaHaPeGps} for a more detailed discussion. For an $A$-definable set $X$, an element $c\in X$  is \emph{$A$-generic} (in $X$), or \emph{generic over $A$},  if $\dpr(c/A)=\dpr(X)$.

For any valued field, $(K,v)$, we denote by $\CO_K$ (or just $\CO$ if the context is clear) its valuation ring. Its maximal ideal $\m_K$ (or $\m$) and $\bk_K:=\CO_K/\m_K$ (or just $\bk$) its residue field. The value group is $\Gamma_K$ (or just $\Gamma$). A closed ball in $K$ is a set of the form $B_{\geq \gamma}(a):=\{x\in K: v(x-a)\geq \gamma\}$ and likewise $B_{>\gamma}(a)$ for open balls. We will use the fact that $v$ extends naturally to $K/\CO\setminus \{0\}$ (by $v(a/\CO):=v(a)$ for any $a\notin \CO$), and use the same notation $B_{>\gamma}(x)$ and $B_{\ge \gamma}(x)$ for $x\in K/\CO$ in the obvious way. We will, however, reserve \textbf{the term ``ball'' in $K/\CO$ only to  such sets where $\gamma<\Zz$}.   A ball in $K^n$ (or in $(K/\CO)^n$) is an $n$-fold  product of $K$-balls (or $(K/\CO)$-balls) of {\bf equal radii}. 

As $\CK$ is a $p$-adically closed field, it is elementarily equivalent to some finite  extension,  $\mathbb{F}$, of $\mathbb{Q}_p$.  By saturation, we may assume that $(K,v)$ is an elementary extension of $(\mathbb{F},v)$.  Since its value group $\Gamma_{\mathbb{F}}$ is isomorphic to $\mathbb{Z}$, as ordered abelian groups,  we identify $\Gamma_{\mathbb{F}}$ with $\mathbb{Z}$ and view it as a prime (and minimal) model for $\Gamma$. We denote $\Zz_{Pres}$ the structure $(\Zz, +, <)$.

\begin{remark}
In \cite[\S3]{HaHaPeGps} we study the structure of $K/\CO$ in $p$-adically closed fields. In this context, it was helpful to work in a saturated model, expanding the language by constants for all  elements of (a copy of) $\mathbb{F}$.

Although the saturated model $\CK$ plays an important role in many of our proofs, the main theorems  of the present paper do not have any saturation assumptions. Thus,  a copy of $\mathbb F$ cannot be expected to exist in all our models (let alone be named). Whenever needed (especially in Section \ref{ss: lsi to K/O}), as part of the proof,  we bridge this gap in the assumptions. 
\end{remark}

\subsection{Some specialised terminology} 
By \cite{DoGoLi},  $p$-adically closed fields are dp-minimal, so every definable set in $\CK=\CK^{eq}$ has finite dp-rank. Using this basic fact, we remind some terminology from \cite{HaHaPeGps} that will be used throughout the paper: 

 The sorts $K, \Gamma$ and $K/\CO$ are referred to as the \emph{distinguished sorts}  (of $\CK$).  Given $\CK_0\equiv \CK$ a set $S$, definable in $\CK_0$, is \emph{locally almost strongly internal to a distinguished sort $D$} if {\bf in a sufficiently saturated elementary extension}
  there is a definable infinite set $X\sub S$ and a definable finite-to-one map $f: X\to D^n$, for some $n\in \Nn$. The set $X$ is called \emph{almost strongly internal to $D$}. If we can find a definable injection, $f:X\to D^n$, then $S$ is \emph{locally strongly internal to $D$} and $X$ is \emph{strongly internal to $D$}. By \cite[Lemma 7.10]{HaHaPeGps} every infinite set definable in $\CK$ is locally almost strongly internal to at least one of the distinguished sorts.

 A $D$-critical subsets of $S$ is a $K$-definable $D$-strongly internal  $X\sub S$ of maximal dp-rank.  The \emph{$D$-rank\footnote{In \cite{HaHaPeGps} this was called the $D$-critical rank of $S$.}} of $S$ is the dp-rank of any $D$-critical $X\sub S$.  Almost $D$-critical sets (and the corresponding almost $D$-rank) are defined by replacing ``$D$-strongly internal'' with ``almost $D$-strongly internal''. 
A definable set $S$ is \emph{$D$-pure} if it is locally almost strongly internal to $D$ but not to any other distinguished sort.   \\

 In several  of the results we need from  \cite{HaHaPeGps} it is assumed that the definable group $G$ is an \emph{(almost) $D$-group}, for some distinguished sort $D$.  The definition of $D$-groups, \cite[Definition 4.23]{HaHaPeGps}, is rather technical and not explicitly used anywhere in the present text, so we omit it. For the present text it suffices to know that, by \cite[Fact 4.25]{HaHaPeGps}, every definable group in $\CK$ which is locally almost strongly internal to $D$ is an almost $D$-group.  More importantly, by  \cite[Proposition 4.35]{HaHaPeGps} any infinite definable group $G$, locally almost strongly internal to $D$, contains a finite normal subgroup, $H$,  defined over the same parameters as $G$, such that  $G/H$ is a $D$-group. Every $D$-group is in particular locally strongly internal to $D$.

\subsection{The infinitesimal group $\nu_D$}

 One of the main results of \cite{HaHaPeGps} (Theorem 7.11) associates to any $D$-group a canonical type definable subgroup $\nu_D$ over $K$, strongly internal to $D$. The construction was done abstractly, but the current paper requires a more concrete description, that we give below. As the proof is not directly related to the main results of the present paper we differ it to  Appendix \ref{AS:nu}. Before doing so, we remind the following definitions. 

Recall our convention that a {\em ball} in $(K/\CO)^n$ is an $n$-fold product of valuative subballs of $K/\CO$ of (negatively) infinite and equal radii. Similarly, in $\Gamma$ we have:
\begin{definition}[\text{\cite[Definition 3.2]{OnVi}}]\label{D: boxes}
    A \emph{generalised box} around $a:=(a_1,\dots, a_n)\in \Gamma^n$ is the product of $n$ sets of the form $(b_i,c_i)\cap \{x_i: x_i-a_i\in P_{m_i}\}$ where both intervals $(b_i,a_i)$ and $(a_i,c_i)$ are infinite and $P_{m_i}$ is the predicate for $m_i$-divisibility. 
\end{definition}

Since $K/\CO$ and $\Gamma$ do not eliminate $\exists^\infty$ (see Section \ref{section 2.4}) the notions of balls in $(K/\CO)^n$ as well as that of  generalized boxes in $\Gamma^n$ are not definable in families. \\
 
 We can now state the lemma describing the construction of  $\nu_D$ in terms of balls (and generalised boxes): 
 
 \begin{lemma}\label{L: nu}
    Let $G$ be a $D$-group definable in $\CK$, for $D=K,\Gamma$, or $K/\CO$ and $n$ the $D$-rank of $G$. Then there exists a symmetric $D$-critical set, $X\sub G$,  with $\nu_D\vdash X$ and $f: X\to D^n$ a definable injection, such that: 
    \begin{enumerate}
        \item For $D=K$, $f(X)$ is a ball around $0$ and 
        \[
        \nu_K=\{f^{-1}(U): U\sub K^n \text{ a ball around $0$ }\}.
        \]
        \item For $D=\Gamma$,  $f(X)$ is a generalized box around $0$ and 
        \[
        \nu_\Gamma=\{f^{-1}(U): U\sub K^n \text{ a generalised box around $0$ }\}.
        \]
         Moreover, for every $x,y\in X$, if $xy^{\pm 1}\in X$ then $f(xy^{\pm 1})=f(x)\pm f(y)$.
         
        \item For $D=K/\CO$,   $X \le G$ is a subgroup,  $f$ is a definable injective group homomorphism, with $f(X)$ a ball around $0$, and 
        \[
        \nu_{K/\CO}=\{f^{-1}(U): U\sub K^n \text{ a ball around $0$ }\}.
        \]
    \end{enumerate}
 \end{lemma}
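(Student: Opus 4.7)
The plan is to unwind the abstract construction of $\nu_D$ from \cite[Theorem 7.11]{HaHaPeGps} using a concrete chart on $G$ and then identify, in each sort, the $K$-definable neighborhoods of $0$ in $D^n$ of maximal dp-rank.

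First I would fix any $D$-critical subset $Y\subseteq G$ of dp-rank $n$ with a definable injection $g:Y\to D^n$ (which exists by the definition of $D$-group), left-translate $Y$ by $g_0^{-1}$ for $g_0\in Y$ generic to arrange $e\in Y$, compose $g$ with a translation so that $g(e)=0$, and replace $Y$ by $Y\cap Y^{-1}$ to make it symmetric. This produces a symmetric $D$-critical $Y$ with $g:Y\to D^n$ definable and injective, $e\in Y$, $g(e)=0$. By \cite[Theorem 7.11]{HaHaPeGps}, $\nu_D$ is the $K$-type-definable subgroup concentrating at $e$ consisting of the elements of maximal dp-rank in every $K$-definable neighborhood of $e$ that is almost strongly internal to $D$; pushing forward through $g$, $g_*\nu_D$ is the partial type over $K$ of elements of $D^n$ realizing the maximal dp-rank $n$ in every $K$-definable subset of $g(Y)$ containing $0$.

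The core step is then to identify $g_*\nu_D$ sort by sort. In $K^n$, a routine dp-minimality argument (elements of valuatively unbounded coordinates have strictly smaller dp-rank) shows that a $K$-definable set containing $0$ has full dp-rank $n$ iff it contains a ball around $0$; hence $g_*\nu_K$ is exactly the filter of $K$-balls around $0$. In $\Gamma^n$, the analogue of this, using Presburger cell decomposition and the $p$-adically closed structure on $K$, is that the $K$-definable sets of dp-rank $n$ around $0$ are exactly the supersets of generalised boxes in the sense of Definition \ref{D: boxes}; this gives (2). In $(K/\CO)^n$, using the analysis of $K/\CO$ carried out in \cite[\S 3]{HaHaPeGps} (in particular the description of $K$-definable subsets and the requirement that the radius be $<\Zz$) one again obtains that the $K$-definable neighborhoods of $0$ of dp-rank $n$ are exactly the $(K/\CO)$-balls around $0$, giving (3). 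In each case I would then shrink $Y$ to a $D$-critical $X$ so that $f:=g\!\restriction\! X$ is a bijection of $X$ onto a single ball/generalised box and $\nu_D\vdash X$.

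For the ``moreover'' part of (2), I would pull the group operation back through $f$ to a partial definable operation $\oplus$ on a generalised box and observe that, on realizations of $g_*\nu_\Gamma$ inside $\Gamma^n$, the operation $\oplus$ agrees with coordinatewise $+$ (since both give a type-definable group law on the same set of realizations, and Presburger is rigid enough that any type-definable group law on a neighborhood of $0$ in $\Gamma^n$ is coordinatewise addition, up to shrinking). Compactness then pushes this equality to a definable generalised sub-box. For (3), the stronger conclusion that $X$ itself is a subgroup and $f$ a homomorphism uses that $(K/\CO,+)$ is \emph{already} a definable group, so a group chunk argument together with the fact that the image $f(X)$ is a ball (hence a subgroup of $(K/\CO)^n$) lets $f$ be chosen as a definable homomorphism from a definable subgroup $X\le G$ of dp-rank $n$ realising $\nu_{K/\CO}$. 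The main obstacle I expect is precisely the passage from infinitesimal data to data on a definable neighborhood: the equality of $\oplus$ and $+$ in (2), and the upgrade of $X$ to a genuine subgroup in (3), both require combining the type-definability of $\nu_D$ with the rigidity of group laws on $\Gamma^n$ and $(K/\CO)^n$ respectively, and this rigidity is where the concrete structure of the distinguished sorts enters in an essential way.
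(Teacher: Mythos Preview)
Your description of $\nu_D$ as ``elements realising maximal dp-rank in every $K$-definable neighbourhood of $e$'' is slightly off: $e$ is in $\dcl(\emptyset)$, so one cannot speak of $e$ being generic over parameters. The paper (and \cite[Proposition~5.8]{HaHaPeGps}) always works with $\nu_X(c)$ at a \emph{generic} $c$ and then translates by $c^{-1}$; your first paragraph silently translates the generic away and then tries to run the genericity description at $e$, which does not literally make sense. This is fixable, but you should be aware of it.

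More substantively, your route to the additive/homomorphism structure in (2) and (3) is different from the paper's and is where the real work hides. The paper does \emph{not} derive ``$f(xy^{\pm1})=f(x)\pm f(y)$'' for $\Gamma$ by a rigidity argument, nor does it obtain the definable subgroup $X\le G$ for $K/\CO$ via a group-chunk construction. Instead it quotes \cite[Theorem~7.11(2),(3)]{HaHaPeGps} directly: for $\Gamma$ this already gives a definable injection of $\nu_\Gamma$ into $(\Gamma^r,+)$ as a group map (and compactness then yields the additivity on a definable $X$); for $K/\CO$ it already gives a \emph{definable subgroup} $H\le G$ embedding into $((K/\CO)^r,+)$, so no group-chunk step is needed at all. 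The remaining work in the paper is only to lower $r$ to $n$ (via \cite[Proposition~4.6]{SimWal} for $K$, Presburger cell decomposition for $\Gamma$, and a separate projection lemma for subgroups of $(K/\CO)^r$), and then to identify $\nu_D$ with the ball/box filter, which is done exactly as you suggest.

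Your alternative for (2) --- that any type-definable group law near $0$ in $\Gamma^n$ is coordinatewise addition by Presburger rigidity --- is plausible and could be made to work, but it is essentially a reproof of \cite[Theorem~7.11(2)]{HaHaPeGps}. Your alternative for (3) is more problematic: ``$f(X)$ is a ball, hence a subgroup of $(K/\CO)^n$, so a group chunk argument upgrades $X$ to a subgroup of $G$'' is not a step you can take for free. A type-definable group does not automatically extend to a definable one, and the passage from $\nu_{K/\CO}$ to an honest definable subgroup of $G$ isomorphic to a ball is precisely the content of \cite[Theorem~7.11(3)]{HaHaPeGps}. So either cite that result, as the paper does, or be prepared to reprove it.
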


For the proof, see Appendix \ref{AS:nu}.  

\subsection{Dimension$^{eq}$}\label{section 2.4}
We recall that the valued field sort $K$ of $\mathcal K$ is a geometric structure. I.e., $\acl$ satisfies Steinitz exchange (e.g., \cite[Corollary 6.2]{HasMac}) and the quantifier  $\exists^\infty$ can be eliminated.

This last property, referred to sometimes as \emph{uniform finiteness}, means that given a definable family $\phi(x,y)$ and an $\aleph_0$-saturated model $\CM$ the set $\{b\in M: |\phi(M,b)|<\infty \}$ is definable. Gagelman, \cite{Gagelman}, extends  the $\acl$-dimension on $K$  to $\CK^{eq}$ as follows:  Given a definable equivalence relation $E$ on $K^n$ set 

\[\dim^{eq}(a_E/A)=\max\{\dim(b/A)-\dim[a]:b\in [a]\},\] where $\dim:=\dim_{\acl}$,
the $E$-equivalence class of $a$ is $[a]\sub K^n$ and   $a_E:=a/E\in K^n/E$.
For $Y\sub X/E$ defined over $A$, we define
\[\dim^{eq}(Y)=\max\{\dim^{eq}(a_E/A):a_E\in Y\}.\]

For a concise summary of the properties of $\dim^{eq}$ we refer to \cite[\S 2]{JohnTopQp}. In the present text we will mostly use additivity of $\dim^{eq}$: 
\[
\dim^{eq}(a,b/A)=\dim^{eq}(a/Ab)+\dim^{eq} (b/A).
\]
Note that $\dim^{eq}$ coincides with $\dim_{\acl}$ on definable subsets of $K^n$, and on tuples in $K$ over parameters from $\CK$.  For ease of notation, we use $\dim$ instead of $\dim^{eq}$ for imaginary elements as well.

Note also that for definable subsets of $K^n$ dimension is the same as dp-rank (\cite[Theorem 0.3]{Simdp}), a fact that we use without further mention. We also use the fact that, by sub-additivity of the dp-rank, it follows immediately from the definitions that $\dim(X)\leq \dpr(X)$ for any definable set $X$ in $\CK^{eq}$.  Note, also,  that $\dim(D)=0$ for $D=\Gamma$ and $D=K/\CO$.

Since dimension is preserved under definable finite-to-one functions, it follows that if $X$ is locally almost strongly internal to $K$ then $\dim(X)>0$. The converse follows from the fact that $K$ is the only distinguished sort of positive dimension. The equivalence can also be stated as follows.

\begin{lemma}\label{l: pure 0dim}
    A definable set $S$ is $K$-pure if and only if every definable $0$-dimensional  $X\sub S$ is finite. 
\end{lemma}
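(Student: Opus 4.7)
The plan is to unwind the definitions and invoke the three facts highlighted in the paragraph immediately preceding the statement: (i) \cite[Lemma 7.10]{HaHaPeGps} says every infinite definable set is locally almost strongly internal to some distinguished sort; (ii) dimension is preserved under definable finite-to-one maps; (iii) $\dim(K)>0$ while $\dim(\Gamma)=\dim(K/\CO)=0$. The lemma really just rephrases the ``$K$ is the only positive-dimensional distinguished sort'' observation in the language of subsets.

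For the forward direction, I would assume $S$ is $K$-pure and take a $0$-dimensional definable $X\subseteq S$. If $X$ were infinite, \cite[Lemma 7.10]{HaHaPeGps} would yield (in a sufficiently saturated elementary extension) an infinite definable $Y\subseteq X$ and a definable finite-to-one $f\colon Y\to D^n$ for some distinguished $D$. If $D=\Gamma$ or $D=K/\CO$ then $Y$ witnesses that $S$ is locally almost strongly internal to a sort other than $K$, contradicting $K$-purity. If $D=K$, then $f(Y)\subseteq K^n$ is infinite, so $\dim(f(Y))\geq 1$; by finite-to-one invariance of $\dim^{eq}$ this forces $\dim(Y)\geq 1$, contradicting $\dim(Y)\leq \dim(X)=0$.

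For the converse, assume every definable $0$-dimensional $X\subseteq S$ is finite; I may further assume $S$ is infinite (otherwise the statement is vacuous on the purity side and requires no argument beyond noting the convention). Apply \cite[Lemma 7.10]{HaHaPeGps} to $S$ itself: it is locally almost strongly internal to some distinguished $D$, witnessed by $Y\subseteq S$ and $f\colon Y\to D^n$ finite-to-one. If $D\in\{\Gamma,K/\CO\}$ then $\dim(Y)=\dim(f(Y))=0$, giving an infinite $0$-dimensional definable subset of $S$, contradicting the hypothesis. Hence $D=K$, and the same argument rules out local almost strong internality to $\Gamma$ or $K/\CO$ in any elementary extension (the hypothesis being first-order preserved, since $\dim^{eq}$ is definable in families of subsets of $K^n$), so $S$ is $K$-pure.

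There is essentially no obstacle here: the only mild subtlety is the ``in every elementary extension'' clause in the definition of $K$-pure, which is handled by observing that the hypothesis ``every $0$-dimensional definable subset is finite'' transfers across elementary extensions, and, conversely, that local almost strong internality descends to $\CK$ from any extension by saturation of $\CK$.
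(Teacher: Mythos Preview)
Your proof is correct and follows essentially the same approach as the paper's: both directions hinge on applying \cite[Lemma 7.10]{HaHaPeGps} (the paper cites the closely related Lemma 7.19) together with the invariance of $\dim^{\mathrm{eq}}$ under finite-to-one maps and the fact that $K$ is the only distinguished sort of positive dimension. The only difference is cosmetic: the paper argues both implications by contraposition, while you argue the forward one directly; the case split on $D=K$ versus $D\neq K$ and the ensuing dimension count are identical.

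One small remark: your closing paragraph about transfer to elementary extensions is unnecessary. The paper's standing convention is that $\CK$ is $(2^{\aleph_0})^+$-saturated, and local almost strong internality is already phrased in terms of a sufficiently saturated extension, so the ``in every elementary extension'' clause is absorbed by working in $\CK$ itself. The paper's own proof accordingly says nothing about this point.
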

\begin{proof} 
    Assume that  $X\sub S$ is infinite and $0$-dimensional. By \cite[Lemma 7.19]{HaHaPeGps},  $X$ (and hence also $S$) is locally almost strongly internal to some distinguished sort $D$. Namely, there is a definable infinite $X_1\sub X$ and $f:X_1\to f(X_1)\sub D^n$ finite-to-one.  Since $\dim(X_1)\ge \dim (f(X_1))$,  necessarily $\dim (f(X_1))=0$ with $f(X_1)$ infinite. Hence,  $D\neq K$, so $S$ is not $K$-pure. 
    
    For the converse, assume that $S$ is not $K$-pure, witnessed by an infinite $X\sub S$ and $f:X\to D^n$ finite-to-one, for $D\neq K$.  Since $\dim(D)=0$ for  $D\neq K$,  it follows that  $\dim(f(X))=0$ and hence $\dim(X)=0$.   So, $X$ is infinite and $0$-dimensional.
\end{proof}

\subsection{Topologies}\label{ss: topologies}

By a definable topology on a definable set $X$, we mean a uniformly definable basis of open sets.   In \cite[Corollary 5.14]{HaHaPeGps}  we show that if $G$ is locally strongly internal to $K$ then $G$ can be endowed with a definable Hausdorff group topology, $\tau_K$.  In \cite{JohnTopQp}, Johnson equips, more generally,  any interpretable group $G$  with a (possibly discrete) definable Hausdorff group topology. Johnson's so-called \emph{admissible topology} is defined by the two following properties:
\begin{enumerate}
    \item  There is a definable manifold $Y$ and  a definable surjective continuous open map $Y\to G$. 
    \item  For every $a\in G$ there is a neighbourhood $U\ni a$ and a definable homeomorphism from $U$ onto an open subset of some $K^n$ (possibly, the one-point space $K^0$).
\end{enumerate}
See, \cite[Theorem 4.29]{JohnTopQp} for the details. In \cite[Theorem 5.10]{JohnTopQp} Johnson proves that any interpretable group $G$ admits a unique admissible group topology.   As can be readily seen from (2) above, if $\dim(G)=0$ then Johnson's \emph{admissible} topology is discrete. We will see below that if $\dim(G)>0$
then the $\tau_K$-topology coincides with the admissible topology.

Recall that the \emph{(almost) $K$-rank of $G$} is the maximal dp-rank of a definable $X\sub G$ (almost) strongly internal to $K$.  The equivalence of (2) and (3) below allows us to avoid some technical issues that may occur in groups which are  locally {\em almost} strongly internal to $K$ but not locally strongly internal to $K$.  The proof is, in essence, a restatement of  Johnson's results: 
\begin{lemma}\label{L: lasi=lsi=dim>0}
    Let $G$ be an infinite definable group in $\CK$. The following natural numbers are equal: 
    \begin{enumerate}
        \item $\dim(G)$,
        \item The $K$-rank of $G$,
        \item The almost $K$-rank of $G$.
    \end{enumerate}
    
    In particular, $G$ is locally strongly internal to $K$ if and only if it is locally almost strongly internal to $K$ if and only if $\dim(G)>0$.
\end{lemma}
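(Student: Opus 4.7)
The plan is to prove the cycle of inequalities
\[
\textrm{$K$-rank}(G) \;\leq\; \textrm{almost $K$-rank}(G) \;\leq\; \dim(G) \;\leq\; \textrm{$K$-rank}(G),
\]
from which the ``in particular'' clause is immediate: if any of the three equal quantities is positive, all are, so local strong internality, local almost strong internality, and $\dim(G) > 0$ coincide.

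The first inequality is immediate from the definitions, since any strongly internal subset is almost strongly internal. For the second, let $X \sub G$ witness almost strong internality to $K$, via a definable finite-to-one map $f \colon X \to K^n$. Invariance of dp-rank under finite-to-finite correspondences gives $\dpr(X) = \dpr(f(X))$; the fact that dp-rank and $\dim$ coincide on definable subsets of $K^n$ gives $\dpr(f(X)) = \dim(f(X))$; and preservation of $\dim$ under definable finite-to-one maps in $\CK^{\eq}$ gives $\dim(X) = \dim(f(X))$. Hence $\dpr(X) = \dim(X) \leq \dim(G)$, so the almost $K$-rank of $G$ is bounded by $\dim(G)$.

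The substantive direction is $\dim(G) \leq \textrm{$K$-rank}(G)$, which I would obtain from Johnson's admissible topology. By property (2) of that topology (\cite[Theorem 4.29]{JohnTopQp}), every $a \in G$ possesses a definable open neighborhood $U_a$ together with a definable homeomorphism $f_a \colon U_a \to V_a$, where $V_a \sub K^{n_a}$ is open. Each such $f_a$ is in particular a definable injection, so $U_a$ is strongly internal to $K$ with $\dpr(U_a) = \dpr(V_a) = n_a$. Since $\CK$ is dp-minimal, $\dpr(G) < \infty$, and therefore the values $n_a$ are bounded; let $N$ denote their maximum. A chart of dimension $N$ immediately gives $\textrm{$K$-rank}(G) \geq N$.

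It remains to argue that $N = \dim(G)$. The inequality $N \leq \dim(G)$ is clear since each $U_a \sub G$ has $\dim(U_a) = n_a$. For the reverse, partition $G$ into the (definable) strata $G_n = \{a \in G : \text{the chart at $a$ has dimension } n\}$ for $n = 0, \ldots, N$; by construction each $G_n$ is locally definably homeomorphic to an open subset of $K^n$, and hence has $\dim(G_n) \leq n$. Since $G = \bigcup_{n=0}^N G_n$ is a finite union, $\dim(G) = \max_n \dim(G_n) \leq N$. A chart $U_a$ of dimension $N = \dim(G)$ then witnesses $\textrm{$K$-rank}(G) \geq \dim(G)$, completing the cycle. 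The main obstacle I anticipate is the rigorous justification of this last step: extracting from Johnson's pointwise statement a definable stratification of $G$ by chart dimension (or, equivalently, a uniformly definable family of charts), so that the standard additivity of $\dim$ can be applied to the finite union $G = \bigcup_n G_n$. Once this uniformity is in hand, the remainder of the argument is bookkeeping.
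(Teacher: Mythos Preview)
Your cycle of inequalities and the treatment of the first two are exactly as in the paper. The divergence is in the last step, $\dim(G)\le K\text{-rank}(G)$, and the obstacle you flag is real: the pointwise clause (2) of the admissible topology does not by itself give you a definable stratification by chart dimension, and the parameters needed to define the chart at a generic point $a$ may well involve $a$ itself, so the naive ``generic point lies in a chart'' argument does not immediately bound $\dim(a/A)$.

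The paper bypasses all of this by citing a sharper statement from Johnson, namely \cite[Corollary~4.37]{JohnTopQp}: there exists a definable open $U\subseteq G$ and a definable homeomorphism $U\to V$ with $V\subseteq K^n$ open and $n=\dim(G)$. This single chart of the correct dimension immediately witnesses $K\text{-rank}(G)\ge\dim(G)$, and the cycle closes. Your stratification argument, even if made rigorous, would essentially be reproving this corollary. (One small simplification you missed: since the admissible topology is a group topology, translating a chart at any point gives a chart of the same dimension at every other point, so the chart dimension is constant and your strata $G_n$ are either empty or all of $G$. This removes the need for a stratification, but you are still left with showing that this constant equals $\dim(G)$, which is exactly what Corollary~4.37 provides.)
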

\begin{proof}

    Since $\dim(X)\le \dim(G)$ for all $X\sub G$, and since dimension is preserved under definable finite-to-one functions, it is clear that $\dim(G)$ is an upper bound on the (almost) $K$-rank of $G$ (note that in $K^m$ dimension is the same as dp-rank). On the other hand, by \cite[Corollary 4.37]{JohnTopQp} there exists a definable local homeomorphism of some open (in the admissible topology) subset of $G$ with an open subset of $K^n$ for $n=\dim(G)$. This local homeomorphism witnesses that the $K$-rank of $G$ (and therefore also its almost $K$-rank) is, at least, $n$. The result follows. 
\end{proof}

\begin{remark}
 The equivalence of (1) and (2)  holds in certain expansions of real closed valued fields and algebraically closed valued fields, studied in \cite{HaHaPeGps}.  We postpone the proof to a subsequent paper.
\end{remark}

We now turn to showing that Johnson's admissible topology is the same as our $\tau_K$-topology. Note, before proceeding, that by the previous lemma, the $\tau_K$-topology is defined for any definable $G$ with $\dim(G)>0$, and as a matter of convention we may define $\tau_K$ to be the discrete topology if $\dim(G)=0$. 

\begin{lemma}\label{L:topologies coincides}
Let $G$ be an infinite definable group in $\CK$. Then $\tau_K$ coincides with the (unique) admissible $G$-topology.
\end{lemma}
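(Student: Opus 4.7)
The plan is to show that $\tau_K$ itself satisfies Johnson's definition of admissibility and then conclude by the uniqueness part of \cite[Theorem 5.10]{JohnTopQp}.

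First, I would dispense with the case $\dim(G) = 0$. By Lemma \ref{L: lasi=lsi=dim>0}, $G$ has no infinite definable subset (almost) strongly internal to $K$, so condition (2) in the definition of admissibility forces the local charts to go into $K^0$; hence every point is isolated, i.e. the admissible topology is discrete. Since $\tau_K$ is discrete by convention in this case, the two topologies coincide.

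Assume now $\dim(G) = n > 0$. By \cite[Corollary 5.14]{HaHaPeGps}, $\tau_K$ is a definable Hausdorff group topology on $G$. Combining this with Lemma \ref{L: nu}(1), I would fix a $K$-critical $X \subseteq G$ and a definable injection $f\colon X\to K^n$ whose image is a ball around $0$, and a $\tau_K$-open neighborhood $V$ of $e$ contained in $X$ on which $f$ restricts to a definable $\tau_K$-homeomorphism onto an open ball in $K^n$. To verify admissibility condition (2) for $\tau_K$, I take an arbitrary $a\in G$: then $aV$ is a $\tau_K$-open neighborhood of $a$ (left translation being a $\tau_K$-homeomorphism because $\tau_K$ is a group topology), and $y\mapsto f(a^{-1}y)$ is a definable homeomorphism from $aV$ onto the open ball $f(V)\subseteq K^n$. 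Condition (2) is thus established.

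For condition (1), I would take $Y = G$ itself, equipped with the atlas of charts $\{(aV, f_a) : a\in G\}$ where $f_a(y) := f(a^{-1}y)$. The transition maps are definable, and the compatibility of the charts follows from continuity of multiplication and inversion in the group topology $\tau_K$; this makes $Y$ a definable manifold in Johnson's sense, and the identity map $Y\to G$ is trivially a definable surjective continuous open map. With (1) and (2) verified, $\tau_K$ is admissible, and by \cite[Theorem 5.10]{JohnTopQp} it must coincide with the unique admissible topology on $G$.

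The main subtlety, and the step I expect to require the most care, is the verification of condition (1), since it depends on the precise technical notion of \emph{definable manifold} used in \cite{JohnTopQp}; the point to check is that the uniformly definable family of charts $\{f_a\}_{a\in G}$ obtained by translating $f|_V$ really does satisfy Johnson's atlas conditions. If identifying $G$ itself as a definable manifold proves awkward, a cleaner substitute would be to take $Y$ to be a disjoint union of translates $\{g_i V\}$ of $V$ ranging over a definable set of coset representatives that cover $G$ (together with the canonical projection to $G$); the verification that this is a manifold is then immediate from the single chart $f$ on $V$, while surjectivity, continuity, and openness of the projection follow from $e\in V$ and from $\tau_K$ being a group topology.
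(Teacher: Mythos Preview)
Your approach is correct in outline but takes a genuinely different route from the paper's. The paper does not verify that $\tau_K$ is admissible; instead it compares the two topologies directly at a single point. Concretely: since both are group topologies it suffices to show they agree near one point. The paper takes the chart $f:X\to U\subseteq K^n$ provided by \cite[Corollary 4.37]{JohnTopQp} for the \emph{admissible} topology, picks $d\in X$ generic over the defining parameters, and observes (via \cite[Proposition 5.6]{HaHaPeVF}) that the filter of definable admissible neighbourhoods of $d$ is exactly $\nu_X(d)$. But $\nu_X(d)=\nu_K\cdot d$ by definition, and this is also the $\tau_K$-neighbourhood filter at $d$ by right-invariance of $\tau_K$. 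Hence the two topologies coincide at $d$, and therefore everywhere.

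What your approach buys is a more conceptual statement (``$\tau_K$ is admissible''), at the cost of having to engage with Johnson's precise definition of \emph{definable manifold} --- as you yourself flag. The paper's route sidesteps this entirely: it never needs to produce a manifold or an atlas, only to identify the neighbourhood filter at a generic point, which is a one-line consequence of the construction of $\nu_K$. A minor additional point: your extraction of a $\tau_K$-open $V\ni e$ on which $f$ is a $\tau_K$-homeomorphism is not literally contained in Lemma~\ref{L: nu}(1) (which only gives the injection and the description of $\nu_K$); it requires going back to how $\tau_K$ was built in \cite{HaHaPeGps}. This is not a serious gap, but it is another place where the paper's argument is cleaner, since it reads the chart off the admissible side rather than the $\tau_K$ side.
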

\begin{proof}
    We may assume that $\dim(G)>0$, since otherwise both topologies are discrete.  Denote by $\tau_A$ the admissible topology. Since both topologies are group topologies, it is enough to show that they agree on a neighbourhood of a point.

    By \cite[Corollary 4.37]{JohnTopQp} there exists a definable subset $X\sub G$   and a $B$-definable $\tau_A$-homeomorphism $f:X\to U$ for some open $U\sub K^n$, where $n=\dim(G)$. If $d\in X$ is generic over $B$ then the collection of all definable $\tau_A$-open neighbourhoods of $d$ is exactly $\nu_X(d)$  (see \cite[Proposition 5.6]{HaHaPeVF}. By definition, $\nu_X(d)=\nu_K\cdot d$, and since the $\tau_K$-topology is right-invariant, the $\tau_K$-topology at $d$ equals the $\tau_A$-topology. Hence, the two topologies are equal.
\end{proof}

It follows from the above, together with \cite[Proposition 5.6]{HaHaPeGps} that $\nu_K$ is the infinitesimal subgroup of $G$, with respect to the admissible topology. I.e., it is the partial type given by the collection of all $K$-definable admissible neighbourhoods of $e$ (compare also to Lemma \ref{L: nu}). 

\section{Local analysis of definable groups}
We keep the convention that  $\CK=\CK^{\eq}$ is a sufficiently saturated elementary extension of a field $\mathbb{F}$ isomorphic to a finite extension of  $\qp$. Throughout, unless explicitly stated otherwise, we let $G$ be a $D$-group, for one of the distinguished sorts $D$. By this we mean, in particular, that $G$ is locally strongly internal to $D$ and that $D$ admits a type-definable subgroup $\nu_D$ as provided by \cite[Proposition 5.8]{HaHaPeGps} (see also Lemma \ref{L: nu}). In the proof of  Theorem \ref{T: intro1} and Theorem \ref{T: main into} we will reduce the general problem to the situation described above. 

This section is divided into subsections according to whether $D$ is $K$, $\Gamma$ or $K/\CO$. 
We show that if $G$ is locally strongly internal to either $\Gamma$ or $K/\CO$ then $G$ contains an infinite definable normal abelian subgroup (we also give a lower bound on its dp-rank). In the remaining case, where $G$ is $K$-pure, we endow $G$ with a $K$-differential structure and show  that $\ker(\ad)(G_1)\le  Z(G_1)$ for some definable normal subgroup $G_1\trianglelefteq G$ of finite index, where $\ad$ is the adjoint representation of $G_1$.

\subsection{Groups locally strongly internal to $K/\CO$}\label{ss: lsi to K/O}
Let $G$ and $\CK$ be as above and assume that $G$ is a $K/\CO$-group (so locally strongly internal to $K/\CO$).

\begin{fact}\label{F: torsion}
Let $\CK_0\equiv \CK$, $\CK_0$ not necessarily saturated. Then
\begin{enumerate}
    \item  $\mathrm{Tor}(K_0/\CO_0)=\{a\in K_0/\CO_0:v(a)\in \mathbb{Z}\}$.
    \item $\mathrm{Tor}(K_0/\CO_0)$ is a finite direct sum of Pr\"ufer $p$-groups and is isomorphic to $\mathbb{F}/\CO_{\mathbb{F}}$. In particular, $\mathrm{Tor}(K_0/\CO_0)$ is a $p$-group. 
    \item Every ball in $(K_0/\CO_0)^n$ centred at $0$ contains $\mathrm{Tor}(K_0/\CO_0)^n$ and the $p^k$-torsion points are 
    exactly the points $b\in (K/\CO)^n$ with $v(b)\geq -k$.

\end{enumerate}
\end{fact}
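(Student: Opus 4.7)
The plan is to address the three parts in sequence, with the main technical content concentrated in~(2). For~(1), the argument is essentially a Presburger elementarity computation. A nonzero class $a = x + \CO_0$ satisfies $m \cdot a = 0$ exactly when $v(x) \geq -v(m)$, and $v(m) \in \Zz$ is a standard non-negative integer for every $m \in \Zz_{>0}$. Hence being torsion is the same as $v(x)$ being bounded below by some standard negative integer. Since $\Gamma_{K_0} \equiv \Zz_{Pres}$, any interval $[-N, 0] \subseteq \Gamma_{K_0}$ for a standard $N$ contains exactly $N+1$ elements, all of them standard, so $v(x)$ must lie in $\Zz$. The converse is immediate on taking $m = p^k$ with $k$ large enough.

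For~(2), I would use that $\mathbb{F}$ is a prime model of $\mathrm{Th}(\CK)$ in the valued field language, so it elementarily embeds into $K_0$; identify $\mathbb{F}$ with its image in $K_0$. The inclusion induces a homomorphism $\iota \colon \mathbb{F}/\CO_{\mathbb{F}} \to K_0/\CO_0$, injective since $\mathbb{F} \cap \CO_0 = \CO_{\mathbb{F}}$ by elementarity, whose image consists of torsion elements by~(1) applied inside $\mathbb{F}$. The crux is surjectivity onto the torsion: given $a = x + \CO_0$ with $v(x) = -N \in \Zz$, produce $y \in \mathbb{F}$ with $v(x - y) \geq 0$ by an $N$-step iterative approximation relative to a uniformizer $\pi \in \mathbb{F}$ of valuation $1$, using the canonical identification $\bk_{K_0} \cong \bk_{\mathbb{F}}$ (both equal to the same finite field, since $\bk_{\mathbb{F}}$ is finite and $\bk_{K_0} \equiv \bk_{\mathbb{F}}$): at each step, lift the residue of the renormalised error term to an element of $\CO_{\mathbb{F}}$, gaining one unit of valuation per step. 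Once the isomorphism $\mathrm{Tor}(K_0/\CO_0) \cong \mathbb{F}/\CO_{\mathbb{F}}$ is in place, the structure claim is pure algebra: $\mathbb{F} \cong \qp^d$ as a $\qp$-vector space with $d = [\mathbb{F} : \qp]$, and $\CO_{\mathbb{F}}$ is a free $\zp$-module of rank $d$, whence $\mathbb{F}/\CO_{\mathbb{F}} \cong (\qp/\zp)^d$, a direct sum of $d$ Pr\"ufer $p$-groups, and in particular a $p$-group.

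For~(3), a ball in $K_0/\CO_0$ centred at $0$ has, by the convention of Section~\ref{ss:notation}, the form $B_{\geq \gamma}(0)$ (or its open analogue) with $\gamma < \Zz$; it therefore contains every element whose valuation exceeds $\gamma$, and by~(1) this includes all torsion, giving $\mathrm{Tor}(K_0/\CO_0)^n \subseteq B_{\geq \gamma}(0)^n$. The $p^k$-torsion characterisation reads off from~(1) and the computation $p^k a = 0 \Leftrightarrow v(a) \geq -v(p^k)$, applied coordinatewise. The main obstacle throughout is the iterative approximation step in~(2), where the specific features of $\mathbb{F}$---completeness and finiteness of the residue field---are essential; parts~(1) and~(3) are then formal consequences of Presburger elementarity together with the convention on balls in $K/\CO$.
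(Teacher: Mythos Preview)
Your arguments for (1) and (3) are correct. The gap is in (2): the claim that $\mathbb{F}$ is a prime model of $\mathrm{Th}(\CK)$ in the valued field language is false. Take $\mathbb{F}=\qp$: the relative algebraic closure of $\Qq$ in $\qp$ (equivalently, the henselisation of $(\Qq,v_p)$) is a countable $p$-adically closed field, hence by model completeness a proper elementary substructure of $\qp$, into which the uncountable field $\qp$ cannot embed. The paper flags exactly this point in the remark at the end of Section~\ref{ss:notation}: a copy of $\mathbb{F}$ cannot be expected to exist in an arbitrary $\CK_0\equiv\CK$. Consequently your map $\iota\colon\mathbb{F}/\CO_{\mathbb{F}}\to K_0/\CO_0$ need not exist, and the surjectivity step has nothing to stand on.

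The paper circumvents this by first working in the saturated $\CK$, where $\mathbb{F}$ \emph{does} embed by saturation, invoking \cite[Lemma~3.1]{HaHaPeGps} there, and then observing that each $p^k$-torsion subgroup of $K/\CO$ is finite and hence contained in $\acl(\emptyset)$; since $\acl(\emptyset)$ is the same in every model, this transfers all three clauses to any $\CK_0$. Your digit-expansion idea can in fact be salvaged without the embedding: choose a uniformiser $\pi\in K_0$ and a set of residue-field representatives in $\CO_0$ (both available in any model), run your approximation to see that $B_{\ge -N}(0)/\CO_0$ is finite of order $|\bk|^N$, and compute its $p$-rank directly to identify $\mathrm{Tor}(K_0/\CO_0)$ abstractly with $(\qp/\zp)^d$ for $d=[\mathbb{F}:\qp]$. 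That would give a self-contained proof not relying on the external citation, but as written the prime-model assertion is a genuine error, not a mere gap in exposition.
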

\begin{proof}
For our saturated $\CK$, clause (2) is \cite[Lemma 3.1]{HaHaPeGps}(3) (and the discussion preceding it), since in this situation we can embed $\mathbb{F}$, into $\CK$ and then $\mathrm{Tor}(\CK/\CO)=\mathbb{F}/\CO_{\mathbb{F}}$. Clause (1) follows from \cite[Lemma 3.1]{HaHaPeGps}(3) and clause (3) follows from the structure of the Pr\"ufer group.  It follows (from the basic properties of the Pr\"ufer group) that every proper subgroup (and in particular, the subgroup of $p^{k}$-torsion points) is a finite subgroup.  Thus, $\mathrm{Tor}(K/\CO)\sub \acl(\0)$, and because $\CK$ is saturated enough,  the results remain true in $\CK_0$. 
\end{proof}

\begin{lemma}\label{L:full subgroups have the same torsion}
Let $G$ be a definable $K/\CO$-group. Let $H_1, H_2\leq G$ be definable subgroups, and  $f_i:H_i\to (K/\CO)^n$ ($i=1,2$) definable group embeddings whose respective images are  open balls  in $(K/\CO)^n$, where $n$ is the $K/\CO$-rank of $G$. Then $\mathrm{Tor}(H_1)=\mathrm{Tor}(H_2)=f_1^{-1}(\mathbb{F}/\mathcal{O}_{\mathbb{F}})$ and $\dpr(H_1\cap H_2)=n$.
\end{lemma}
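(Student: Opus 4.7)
The torsion calculation is a direct consequence of the setup. Since $f_i\colon H_i \to (K/\CO)^n$ is a group homomorphism, $f_i(H_i)$ is an open ball containing $0=f_i(e_G)$, hence centred at $0$. By Fact~\ref{F: torsion}(3) every ball around $0$ in $(K/\CO)^n$ contains $(\mathbb{F}/\CO_{\mathbb{F}})^n$; injectivity of $f_i$ then gives $\mathrm{Tor}(H_i)=f_i^{-1}((\mathbb{F}/\CO_{\mathbb{F}})^n)$, which by Fact~\ref{F: torsion}(2) is a divisible abelian group (isomorphic to $(\mathbb{F}/\CO_{\mathbb{F}})^n$).

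The crux of the argument will be the canonicity of $\nu_{K/\CO}$. Each pair $(H_i,f_i)$ fits the conclusion of Lemma~\ref{L: nu}(3) (as $H_i$ is a $K/\CO$-critical subgroup with an injective group homomorphism onto a ball around $0$), so the partial types
\[
\pi_i:=\{f_i^{-1}(U): U\sub (K/\CO)^n \text{ a ball around } 0\}
\]
both axiomatise $\nu_{K/\CO}$, and so by canonicity (\cite[Theorem~7.11]{HaHaPeGps}) have the same realisations in $\CK$. For $t\in \mathrm{Tor}(H_i)$, each coordinate of $f_i(t)$ has valuation in $\Zz\cup\{\infty\}$, whereas balls around $0$ in $(K/\CO)^n$ have radii strictly below $\Zz$; hence $f_i(t)\in U$ for every such $U$, so $t$ realises $\pi_i$, and therefore also $\pi_{3-i}$. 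Taking $U=f_{3-i}(H_{3-i})$ (itself a ball around $0$) then forces $t\in H_{3-i}$. Combined with $\mathrm{Tor}(H_i)\sub H_i$, this yields $\mathrm{Tor}(H_1),\mathrm{Tor}(H_2)\sub H_1\cap H_2$, and hence $\mathrm{Tor}(H_1)=\mathrm{Tor}(H_1\cap H_2)=\mathrm{Tor}(H_2)$.

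For the dp-rank equality, the same canonicity argument provides, by compactness applied to the formula $f_1^{-1}(f_1(H_1))\in \pi_1$, a ball $U$ around $0$ with $f_2^{-1}(U)\sub H_1$: then $f_2^{-1}(U)\sub H_1\cap H_2$ is a definable subgroup mapping bijectively via $f_2$ onto $U$, which has dp-rank $n$, so $\dpr(H_1\cap H_2)\ge n$; the reverse inequality is immediate. The main subtlety throughout will be the invocation of canonicity: confirming that any $K/\CO$-critical subgroup equipped with the data of Lemma~\ref{L: nu}(3) yields the same partial type $\nu_{K/\CO}$, which is where \cite[Theorem~7.11]{HaHaPeGps} does the work.
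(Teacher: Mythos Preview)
Your argument is correct, and it hinges on the same core fact as the paper's proof: the canonicity of $\nu_{K/\CO}$. The organisation, however, is reversed. The paper first invokes only the weak consequence $\nu_{K/\CO}\vdash H_i$ (Remark~\ref{R: nu lives on any definable subgroup witnessing}) to get $\dpr(H_1\cap H_2)=n$, and then uses a structural fact about $(K/\CO)^n$ (\cite[Lemma~3.6]{HaHaPeGps}: a full-rank definable subgroup has interior, hence contains a ball around $0$) to place $(\mathbb F/\CO_{\mathbb F})^n$ inside $f_i(H_1\cap H_2)$, yielding the torsion equality. You instead invoke the stronger identity $\pi_i=\nu_{K/\CO}$ for \emph{each} pair $(H_i,f_i)$---this is exactly Lemma~\ref{F: nu in K/O} in the appendix, which applies to any definable injective group homomorphism $f:H\to (K/\CO)^n$ with $\dpr(H)=n$, not just the particular $X,f$ whose existence is asserted in Lemma~\ref{L: nu}(3)---and then read off both conclusions: torsion elements realise $\nu_{K/\CO}$ directly, and compactness produces a ball inside $H_1\cap H_2$. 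Your route is a bit more direct and avoids the extra citation to \cite[Lemma~3.6]{HaHaPeGps}; the paper's route has the minor advantage of needing only $\nu_{K/\CO}\vdash H_i$ rather than the full description of $\nu_{K/\CO}$ in terms of each $f_i$. One small citation point: the claim that $\pi_i=\nu_{K/\CO}$ for arbitrary $(H_i,f_i)$ is Lemma~\ref{F: nu in K/O}, not Lemma~\ref{L: nu}(3) itself (which is an existence statement); your appeal to ``canonicity via \cite[Theorem~7.11]{HaHaPeGps}'' is morally right but the precise statement you need is the appendix lemma.
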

\begin{proof}
The assumptions and the conclusions are invariant under naming new constants, so we may assume that $\mathbb{F}$ is named in $\CK$ and so we may apply the results from \cite{HaHaPeGps}.

By the construction of $\nu_{K/\CO}$ (see Appendix B and Remark \ref{R: nu lives on any definable subgroup witnessing}) we have $\nu_{K/\CO}\vdash H_i$, $i=1,2$, hence   $\nu_{K/\CO}\vdash H_1\cap H_2$. By Lemma \ref{L: nu}(3), this implies that $\dpr(H_1\cap H_2)=n$. 

 Since $f_i(H_i)$ is an open ball, for $i=1,2$, it follows from  Fact \ref{F: torsion} that $\mathrm{Tor}(H_i)=f_i^{-1}((\mathbb{F}/\CO_{\mathbb{F}})^n)$. As $\dpr(H_1\cap H_2)=n$ also $\dpr(f_i(H_1\cap H_2))=n$ for $i=1,2$, so  by  \cite[Lemma 3.6]{HaHaPeGps} $f_i(H_1\cap H_2)$ has non-empty interior, thus contains a sub-ball of $(K/\CO)^n$. Therefore, (since it is a group) it also contains a ball centred at $0$. Thus, $(\mathbb F/\CO_{\mathbb F})^n\sub f_i(H_1\cap H_2)$ and hence $f_i^{-1}((\mathbb F/\CO_{\mathbb F})^n)\sub H_1\cap H_2$. We conclude \[\mathrm{Tor}(H_1)=f_1^{-1}((\mathbb F/\CO_{\mathbb F})^n)=f_2^{-1}((\mathbb F/\CO_{\mathbb F})^n)=\mathrm{Tor}(H_2),\] as needed.
\end{proof}

The next result implies that if $G$ is locally  $K/\CO$-strongly internal, then $G$ is not definably semisimple in any model over which $G$ is defined: 

\begin{proposition}\label{P:not semisimple K/O}
Let $\CK_0\prec \CK$ be an elementary substructure, $G$ a $\CK_0$-definable  $K/\CO$-group in $\CK$. Then there is a $K_0$-definable normal abelian $N\trianglelefteq G$ such that $\nu_{K/\CO}\vdash N$. In particular, $\dpr(N)$ is at least the $K/\CO$-rank of $G$. 
\end{proposition}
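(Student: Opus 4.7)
The plan is to build $N$ as the center of the centralizer of the canonical torsion subgroup inside $G$, with the main ingredients being Lemma~\ref{L:full subgroups have the same torsion} (canonicity of the torsion, which will make it $G$-normal) and Baldwin--Saxl in NIP (which turns the centralizer into a single $K_0$-definable subgroup). First I would apply Lemma~\ref{L: nu}(3) to obtain a definable abelian subgroup $X \le G$ together with a definable group isomorphism $f \colon X \to f(X) \sub (K/\CO)^n$ onto an open ball around $0$, where $n$ is the $K/\CO$-rank of $G$ and $\nu_{K/\CO} \vdash X$. Set $T := f^{-1}((\mathbb{F}/\CO_{\mathbb{F}})^n) = \mathrm{Tor}(X)$, which by Fact~\ref{F: torsion} is an abelian divisible $p$-group whose elements lie in $\acl(\emptyset)$. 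For every $g \in G$, the conjugate $gXg^{-1}$ admits the definable group isomorphism $y \mapsto f(g^{-1}yg)$ onto the \emph{same} ball $f(X)$; Lemma~\ref{L:full subgroups have the same torsion} then gives $\mathrm{Tor}(gXg^{-1}) = T$, whence $gTg^{-1}=T$. In particular $T$ and each finite layer $T[p^k]$ are $G$-normal, and $K_0$-invariance together with finiteness forces each $T[p^k]$ to be $K_0$-definable.

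Next I would apply Baldwin--Saxl to the uniformly definable family of centralizers $\{C_G(t)\}_{t\in G}$, given by the single formula $xy=yx$. This produces a finite $A\sub T$ with $C_G(T)=\bigcap_{t\in T}C_G(t)=C_G(A)$; since $A$ is finite it lies in some $T[p^k]$, giving $C_G(T)=C_G(T[p^k])$. Hence $C_G(T)$ is $K_0$-definable, and it is normal in $G$ because $T[p^k]$ is $K_0$-definable and $G$-normal. Define $N := Z(C_G(T))$: this is $K_0$-definable, abelian, and normal in $G$ (the center is characteristic in the normal subgroup $C_G(T)$). The inclusion $T \sub N$ is automatic: $T$ is abelian, so $T\sub C_G(T)$, and by the defining property of $C_G(T)$ every element of $C_G(T)$ commutes with $T$, so $T\sub Z(C_G(T))=N$.

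To conclude, I would unwind Lemma~\ref{L: nu}(3) together with Fact~\ref{F: torsion}(3): in the saturated $\CK$, an element $y\in K/\CO$ lies in every ball $B_{\ge\gamma}(0)$ with $\gamma<\Zz$ iff $v(y)$ is bounded below by every infinitely negative $\gamma$, iff $v(y)\in\Zz$ or $y=0$; hence the intersection of all balls around $0$ in $(K/\CO)^n$ equals $(\mathbb{F}/\CO_{\mathbb{F}})^n$, and the realisation set of $\nu_{K/\CO}$ is precisely $T$. Therefore $T \sub N$ yields $\nu_{K/\CO} \vdash N$, and the dp-rank bound follows automatically from $\dpr(\nu_{K/\CO})\ge n$. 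The main obstacle I expect is the Baldwin--Saxl step: aligning the single-parameter family of centralizers with the $p^k$-torsion layers to extract a single $K_0$-definable $C_G(T)$ from the descending tower $C_G(T[p])\supseteq C_G(T[p^2])\supseteq\cdots$, and ensuring one can indeed take a finite $A\sub T$ small enough to sit inside one $T[p^k]$.
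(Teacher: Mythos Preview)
Your approach differs substantially from the paper's and is, in outline, more economical. The paper first builds a $\CK$-definable normal abelian $H=\bigcap_{g\in G}H_0^g$ (for $H_0$ a subgroup isomorphic to a ball), places $H$ in a $\CK_0$-definable family $\{H_t\}_{t\in T}$ of normal abelian subgroups, forms $G(r)=\bigcap\{H_t:\eta(t)\le r\}$, and takes $N=\bigcup_{r<0}G(r)$; the $\CK_0$-definability of $N$ comes from this definable-family machinery. You instead exploit the canonicity of $T=\mathrm{Tor}(X)$ (via Lemma~\ref{L:full subgroups have the same torsion}) to obtain $G$-normality and $\aut(\CK/K_0)$-invariance in one stroke, and then use Baldwin--Saxl on the finite layers $T[p^k]$ to produce the single $K_0$-definable group $N=Z(C_G(T))$. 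This bypasses the family bookkeeping entirely. Your anticipated obstacle, the Baldwin--Saxl step, is in fact harmless: the chain $C_G(T[p])\supseteq C_G(T[p^2])\supseteq\cdots$ is a descending chain of uniformly definable subgroups, so it stabilises at some finite stage and $C_G(T)=C_G(T[p^m])$.

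The genuine gap is your final inference: from ``the realisation set of $\nu_{K/\CO}$ in $\CK$ equals $T$'' and $T\sub N$ you conclude $\nu_{K/\CO}\vdash N$. This does not follow as stated. The partial type $\nu_{K/\CO}$ is over all of $K$, and $\CK$ is not saturated over its own universe; thus the $\CK$-realisation set of $\nu_{K/\CO}$ lying inside $N(\CK)$ does \emph{not} force $N$ to be a logical consequence of the type. Concretely, by Lemma~\ref{L: nu}(3) and compactness, $\nu_{K/\CO}\vdash N$ amounts to $f^{-1}(B_{\ge r}(0))\sub N$ for a single $r<\Zz$, whereas you have only secured this for $r\in\Zz$. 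The repair is a one-line overspill: the condition $\chi(r)\equiv\bigl[f^{-1}(B_{\ge r}(0))\sub N\bigr]$ is a formula in $r$ over the finitely many parameters of $f$ and $N$, and $\chi(-k)$ holds for every $k\in\Nn$ because $f^{-1}(B_{\ge -k}(0))=X[p^k]\sub T\sub N$ (Fact~\ref{F: torsion}(3)); by saturation of $\CK$ the type $\{\chi(r)\}\cup\{r<-k:k\in\Nn\}$ is realised, yielding the required $r<\Zz$. This is exactly the compactness step the paper performs for its $G(r_0)$, and once it is inserted your argument is complete and the dp-rank bound follows.
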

\begin{proof}
  As before, we may assume that $\mathbb{F}$ is named in $\CK$ and so we may apply the results of \cite{HaHaPeGps}. First, we show that an infinite normal abelian subgroup of $G$ is definable in $\CK$ and then we construct one that is $\CK_0$-definable as needed.
  
  By Lemma \ref{L: nu}(3) we can find a definable subgroup $H_0$,  $\nu_{K/\CO}\le H_0\leq G$,  that is definably isomorphic to a an open ball in $(K/\CO)^n$ centred at $0$, for $n$ the $K/\CO$-critical rank of $n$. Let $f:H_0\to (K/\CO)^n$ be a group embedding witnessing this.
  
  Let $H=\bigcap\limits_{g\in G} H_0^g$. It is a definable normal abelian subgroup. By Lemma \ref{L:full subgroups have the same torsion}, $\mathrm{Tor}(H_0^g)=\mathrm{Tor}(H_0^h)=f^{-1}((\mathbb F/\CO_{\mathbb F})^n)$, for any $g,h\in G$.  It follows, using compactness and saturation, that there is some  $r_0<\mathbb{Z}$ such that $B_{>r_0}(0)\sub f(H)$. 
  
  Assume that $H$ is definable over some $\widetilde t\in \CK$.
  Note that the statements that $H_{\widetilde t}$ is a normal abelian subgroup of $G$ and that $f_{\widetilde t}(H_{\widetilde t})$ contains a set of the form $B_{>r}(0)\sub (K/\CO)^n$ for some negative $r\in \Gamma$  (not excluding the case $r\in \Zz$) are first order in $\widetilde{t}$.
  
  This gives rise to a $\CK_0$-definable family of group embeddings $f_t:H_t \to (K/\CO)^n$, $t\in T$ 
  each $H_t$ normal abelian whose image in $(K/\CO)^n$ under $f_t$ contains $B_{>r}(0)$ for some $r\in \Gamma^{<0}$.  Let $\eta:T\to \Gamma$ be defined by $\eta(t)=\min\{r\in \Gamma:B_{>r}(0)\sub f(H_t)\}$.  By Lemma \ref{L:full subgroups have the same torsion}, for $t,s\in T$, if $\eta(t), \eta(s)<\mathbb{Z}$ then $\mathrm{Tor}(H_s)=\mathrm{Tor}(H_t)=f_t^{-1}((\mathbb F/\CO_{\mathbb F})^n)$. In particular, $\eta(\widetilde t)\leq r_0<\mathbb Z$, and therefore $\mathrm{Tor}(H_{\widetilde t})=f_{\widetilde t}^{-1}((\mathbb F/\CO_{\mathbb F})^n)$.
  
  Given a negative $r \in \Gamma$, let 
  \[
    G(r):=\bigcap \{H_t:\eta(t)\leq r\}.
   \] 
     
   Note that $G(r)$ is a definable normal abelian subgroup of $G$.  By our above observation, 
   for every $t$ such that $\eta(t)\leq r_0$, we have $f_{\widetilde t}^{-1}((\mathbb F/\CO_{\mathbb F})^n)\sub H_t$, thus $f_{\widetilde t}^{-1}((\mathbb F/\CO_{\mathbb F})^n)\sub G_{r_0}$. By compactness, there exists $r<\mathbb Z$ such that $f_{\widetilde t}^{-1}(B_{>r}(0))\sub G(r_0) $ and therefore   $\nu_{K/\CO}\vdash G(r_0)$ (by Lemma \ref{L: nu}).

  The family $G(r)$, as $r$ varies, is $K_0$-definable and increasing as $r$ tends to $-\infty$; the directed union $N:=\bigcup\limits_{r\in \Gamma_{<0}} G(r)$  is therefore  a $K_0$-definable normal, abelian and $\nu_{K/\CO}\vdash N$. Since the dp-rank of $\nu_{K/\CO}$ is the $K/\CO$-rank of $G$, the conclusion follows. 
\end{proof}

\subsection{Groups locally strongly internal to $\Gamma$}
In the present subsection we assume that $G$ is a $\Gamma$-group, so  locally strongly internal to $\Gamma$. We remind the following. 

\begin{fact}\label{F: def over Z}
For any definable family, $\{X_t\}_{t\in T}$, of subsets of $\Gamma^n$ the family $\{X_t\cap \Zz^n\}_{t\in T}$  is  definable in $\Zz_{Pres}$.
\end{fact}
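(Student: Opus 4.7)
The plan is to combine stable embeddedness of $\Gamma$ in $\CK$, quantifier elimination in Presburger arithmetic, and the specific arithmetic of $\Zz$-groups. As a first step, I would invoke the standard fact that $\Gamma$ is stably embedded in any $p$-adically closed field with its induced structure being pure Presburger. This lets me rewrite the $\CK$-definable family as $X_t=\psi(\Gamma^n,\bar\gamma_t)$ for a fixed quantifier-free Presburger formula $\psi(\bar x,\bar y)$ and a $\CK$-definable assignment $t\mapsto\bar\gamma_t\in\Gamma^m$, reducing the problem to analysing, uniformly in $\bar\gamma$, the trace $\psi(\Zz^n,\bar\gamma)\sub\Zz^n$.

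Next I would unpack $\psi$ as a Boolean combination of atomic Presburger formulas of the form $\sum a_ix_i+\sum b_jy_j<c$, $\sum a_ix_i+\sum b_jy_j=c$, and $\sum a_ix_i+\sum b_jy_j\equiv c\pmod N$, with integer coefficients. Fixing $\bar y=\bar\gamma$ and restricting $\bar x$ to $\Zz^n$, an inequality becomes $\sum a_in_i<\delta$ with $\delta:=c-\sum b_j\gamma_j\in\Gamma$. Here the key structural feature of $\Zz$-groups is decisive: the convex hull of $\Zz$ inside $\Gamma$ coincides with $\Zz$, so $\delta$ either lies in $\Zz$, or is strictly larger than every integer, or is strictly smaller than every integer. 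Accordingly the trace on $\Zz^n$ is, respectively, the ordinary Presburger set $\{\bar n:\sum a_in_i<\delta\}$ with parameter $\delta\in\Zz$, or all of $\Zz^n$, or $\emptyset$. The equality case is analogous (non-empty only when $\delta\in\Zz$), and the congruence case uses the canonical isomorphism $\Gamma/N\Gamma\cong\Zz/N\Zz$, so that the residue $\sum b_j\gamma_j\bmod N$ is a well-defined element of $\Zz/N\Zz$.

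Finally I would extract uniformity by partitioning $T$ into finitely many $\CK$-definable pieces according to the combined \emph{pattern} of cases (trichotomy for each atomic inequality/equality and residue class for each atomic congruence). On each piece the non-integer data is constant, while any remaining integer parameter $\delta$ appearing in a surviving atomic formula is a $\CK$-definable map from the piece into $\Zz$. Substituting back into $\psi$ on each piece yields a single Presburger formula in $\bar x$ with $\Zz$-valued parameters depending $\CK$-definably on $t$, and taking the finite disjunction over the pieces gives the required Presburger-definability of $\{X_t\cap\Zz^n\}_{t\in T}$.

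The only subtlety I anticipate is verifying that the extraction $\bar\gamma_t\mapsto\delta(t)\in\Zz$ on the piece where $\delta$ has finite cut over $\Zz$ is a well-defined $\CK$-definable function; this is immediate once one observes that in a $\Zz$-group an element of $\Gamma$ with finite cut over $\Zz$ must itself lie in $\Zz$. Beyond this point the argument is a routine application of Presburger QE and compactness, so I do not expect any further conceptual obstacle.
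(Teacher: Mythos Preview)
Your approach is essentially correct and more explicit than the paper's. The paper invokes uniform stable embeddedness of $\Gamma$ to reduce to $T\sub\Gamma^k$, and then quotes the known fact that in Presburger arithmetic types over $\Zz$ are uniformly definable (citing Cluckers--Veys and Delon) to conclude immediately. Your argument unpacks exactly this second step by hand: after QE, you analyse each atomic formula using the convexity of $\Zz$ in the $\Zz$-group $\Gamma$ and the isomorphism $\Gamma/N\Gamma\cong\Zz/N\Zz$. This is a perfectly legitimate, more self-contained route, and the case analysis you describe for the atomic traces is accurate.

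There is, however, one incorrect claim in your write-up: the partition of $T$ by ``pattern'' is \emph{not} $\CK$-definable. The condition that $\delta(t)=c-\sum b_j(\gamma_t)_j$ lies in $\Zz$ (as opposed to above or below all of $\Zz$) is not expressible by a first-order formula, since $\Zz$ is not a definable subset of $\Gamma$. Fortunately, you do not need this for the conclusion. The statement only asserts that the family $\{X_t\cap\Zz^n\}_{t\in T}$ is \emph{uniformly} definable in $\Zz_{Pres}$, i.e., that there is a single Presburger formula $\chi(\bar x,\bar w)$ such that each $X_t\cap\Zz^n$ is $\chi(\Zz^n,\bar w)$ for some $\bar w\in\Zz^r$. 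Your analysis already gives this: each trace is an instance of one of finitely many Presburger formulas $\chi_p$ (one per pattern) with integer parameters, and a finite disjunction indexed by a new integer variable yields the single $\chi$. So drop the claim that the pieces are $\CK$-definable and the corresponding claim that $t\mapsto\bar\delta(t)$ is a $\CK$-definable map; what remains is enough.
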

\begin{proof}
Because $\CK$ is $p$-adically closed, $\Gamma$ is stably embedded. By a standard compactness argument, $\Gamma$ is uniformly  stably embedded, so we may assume that $T\sub \Gamma^k$ for some $k$. Since in  Presburger arithmetic types over $\mathbb{Z}$ are (uniformly) definable, the family is definable in $\Zz_{Pres}$. See  \cite[Theorem 0.7]{CoVo} (and also \cite{delon-def}).
\end{proof}

We note a few simple and useful lemmas.

\begin{lemma}\label{L:bound on index}
    Let $\{X_t:t\in T\}$ be a definable family of subsets of $\Gamma^n$ and assume that for all $t\in T$, $X_t\cap \mathbb{Z}^n$ contains a subgroup of $\mathbb{Z}^n$ of finite index. Then there is a uniform upper bound on $l(t)$,  the minimal $l\in \Nn$ such that $X_t\cap \Zz^n$ contains a subgroup $\Zz^n$ of index $l$.  
\end{lemma}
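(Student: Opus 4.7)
The plan is a compactness argument in $\CK$, combined with the reduction to Presburger arithmetic provided by Fact~\ref{F: def over Z}.

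First, I will verify that for each fixed $L \in \Nn$, the condition
\[
\phi_L(t) := \text{``}X_t \cap \Zz^n \text{ contains a subgroup of } \Zz^n \text{ of index at most } L\text{''}
\]
is first-order in $t$ in $\CK$. There are only finitely many subgroups $H_1, \dots, H_{M(L)}$ of $\Zz^n$ of index at most $L$---parametrised, for instance, by integer matrices in Hermite normal form with determinant at most $L$---and each admits an explicit Presburger description with integer parameters. By Fact~\ref{F: def over Z}, the intersection $X_t \cap \Zz^n$ is uniformly of the form $\psi(\Zz^n, s_t)$ for a fixed Presburger formula $\psi$ and a definable parameter $s_t$. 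Combining these, $\phi_L(t)$ becomes a finite disjunction over $i$ of conditions ``$H_i \subseteq X_t \cap \Zz^n$'', and each such disjunct translates to a $\CK$-first-order condition using the Presburger-definability of $H_i$ together with the elementary embedding $\Zz_{Pres} \preceq \Gamma$ (which allows one to push the universal quantifier on $h$ through to the interpretation of the Presburger formula inside $\Gamma$).

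Once first-orderness is established, the rest is standard compactness. Suppose for contradiction that $l(t)$ is unbounded on $T$. Then for each $L \in \Nn$ some $t_L \in T$ witnesses $\neg\phi_L$, and hence the partial type
\[
p(t) := \{t \in T\} \cup \{\neg \phi_L(t) : L \in \Nn\}
\]
is finitely satisfiable in $\CK$. By the standing $(2^{\aleph_0})^+$-saturation of $\CK$, the type $p$ is realized by some $t^* \in T$. Then $X_{t^*} \cap \Zz^n$ contains no subgroup of $\Zz^n$ of any bounded standard index, and in particular no finite-index subgroup at all---contradicting the hypothesis at $t^*$.

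The main obstacle is the first-orderness step. The delicate point is that the universal quantifier ``$\forall h \in H_i$'' implicit in ``$H_i \subseteq X_t \cap \Zz^n$'' ranges over a subset of $\Zz^n$, and $\Zz^n$ is not itself $\CK$-definable, so the quantifier cannot be internalised naively. The resolution will proceed through the Presburger structure: since $H_i$ is Presburger-definable with standard integer parameters and $X_t \cap \Zz^n = \psi(\Zz^n, s_t)$, the condition ``$H_i \subseteq X_t \cap \Zz^n$'' becomes a Presburger statement in the parameter $s_t$, which by the stable embedding of $\Gamma$ in $\CK$ is in turn first-order in $t$.
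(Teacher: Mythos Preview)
Your argument and the paper's are both compactness arguments in the saturated model, but the paper takes a shorter route that entirely bypasses the first-orderness difficulty you spend most of your effort on.

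Instead of your conditions $\phi_L(t)$, the paper works with the type
\[
\rho(t) := \{\, D \not\subseteq X_t : D \subseteq \mathbb{Z}^n \text{ finite, generating a subgroup of finite index}\,\}.
\]
Each formula here is transparently first-order in $t$: since $D=\{d_1,\dots,d_r\}$ is a \emph{fixed finite} subset of $\mathbb{Z}^n\subseteq\Gamma^n$, the condition $D\not\subseteq X_t$ is just the finite disjunction $\bigvee_j (d_j\notin X_t)$. There is no need to invoke Fact~\ref{F: def over Z}, no Presburger translation, no internalisation of a quantifier over $\mathbb{Z}^n$; the entire discussion in your last paragraph becomes unnecessary. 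The contradiction at a realization $t^*$ of $\rho$ is immediate: if $H\subseteq X_{t^*}\cap\mathbb{Z}^n$ were a finite-index subgroup, any finite generating set $D$ of $H$ would satisfy $D\subseteq H\subseteq X_{t^*}$, against $\rho(t^*)$.

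Regarding your own route: the step where you invoke Fact~\ref{F: def over Z} to obtain a \emph{definable parameter} $s_t$ with $X_t\cap\mathbb{Z}^n=\psi(\mathbb{Z}^n,s_t)$ claims more than that fact actually gives. Fact~\ref{F: def over Z} only says that the collection of sets $\{X_t\cap\mathbb{Z}^n : t\in T\}$ coincides with a $\mathbb{Z}_{Pres}$-definable family; it does not hand you a $\CK$-definable assignment $t\mapsto s_t$ with values in $\mathbb{Z}^m$ (indeed $\mathbb{Z}$ is not $\CK$-definable, so such a map would be delicate). Consequently your reduction of ``$H_i\subseteq X_t\cap\mathbb{Z}^n$'' to a first-order condition on $t$ is not fully justified as written. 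The paper's choice of type sidesteps this issue completely by never needing to express containment of an \emph{infinite} subgroup, only of a finite generating set.
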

\begin{proof}
    Assume towards a contradiction  that there is no bound on $l(t)$ for $t\in T$.  So the following type is consistent:
    \[\rho(t):=\{D\not\subseteq X_t: D\subseteq \mathbb{Z}^n \text{ finite, generating a definable subgroup of finite index}\},\] contradicting the assumption.
\end{proof}

\begin{lemma}\label{F: full dp-rank, presburger}
\begin{enumerate}
    \item Let $Y\subseteq \Gamma^n$ be a definable subset. If $Y\cap \mathbb{Z}^n$ contains a subgroup of $\mathbb{Z}^n$ of finite index, then $\dpr(Y)=n$.
    \item Every finite index subgroup $H\leq \Gamma^n$ is definable. 
\end{enumerate}
\end{lemma}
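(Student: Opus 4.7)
The plan for part (2) is to observe that any subgroup $H\le\Gamma^n$ of finite index $l$ must contain $l\Gamma^n$, since Lagrange applied to the finite quotient $\Gamma^n/H$ forces $l\cdot(\Gamma^n/H)=0$. The subgroup $l\Gamma^n$ is $\emptyset$-definable in $\Gamma$ via the $l$-divisibility predicate, and $H$ is a (finite) union of cosets of $l\Gamma^n$ with representatives lying in $\{0,1,\ldots,l-1\}^n\subseteq\Zz^n$; these representatives are themselves $\emptyset$-definable in Presburger arithmetic. Hence $H$ is $\emptyset$-definable as a finite union of definable cosets.

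For part (1), I would work inside the Presburger structure on $\Gamma$, which is legitimate since $\Gamma$ is stably embedded in $\CK$. The hypothesis yields a positive integer $l$ with $l\Zz^n\subseteq Y$ (any finite-index subgroup of $\Zz^n$ contains $l\Zz^n$ for $l$ its index). Applying Presburger cell decomposition, I would write $Y$ as a disjoint union of finitely many cells $C_1,\ldots,C_k$ in $\Gamma^n$. Then $l\Zz^n\subseteq\bigcup_i(C_i\cap\Zz^n)$, and since $\dim(l\Zz^n)=n$ in $\Zz_{Pres}$, sub-additivity of Presburger dimension forces some cell $C_{i_0}$ to satisfy $\dim(C_{i_0}\cap\Zz^n)=n$.

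It remains to show $\dim(C_{i_0})=n$ in $\Gamma^n$ itself; combined with the identity $\dim=\dpr$ in Presburger, this gives $\dpr(Y)\ge n$, and hence $\dpr(Y)=n$. Assuming $\dim(C_{i_0})<n$ for contradiction, the cell is contained in an affine hyperplane $\{L(y)=c\}$ for some non-zero primitive linear form $L$ with $\Zz$-coefficients and some $c\in\Gamma$. Since $L$ takes integer values on $\Zz^n$, the intersection $\{L(y)=c\}\cap\Zz^n$ is either empty (when $c\notin\Zz$) or a coset of an $(n-1)$-dimensional sublattice, contradicting $\dim(C_{i_0}\cap\Zz^n)=n$.

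The main obstacle is the cell-decomposition step, in particular the passage from ``$C_{i_0}$ has Presburger dimension $<n$'' to ``$C_{i_0}$ is contained in a hyperplane defined by a $\Zz$-linear form''. This is a standard feature of Presburger arithmetic — the linear parts of defining conditions always have $\Zz$-coefficients, while parameters appear only as constants in $\Gamma$ — but it requires care to spell out in the presence of non-standard parameters, since bounded intervals in $\Gamma$ with non-standard endpoints still contribute to the dimension.
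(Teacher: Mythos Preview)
Your proof of part (2) is essentially identical to the paper's: both observe that a finite-index subgroup contains $k\Gamma^n$ for some $k\in\Nn$ and is therefore a finite union of cosets of a definable subgroup.

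For part (1), your approach is correct but genuinely different from the paper's. The paper argues in two short steps: first, $Y\cap\Zz^n$ is definable in $\Zz_{Pres}$ (this is Fact~\ref{F: def over Z}, the uniform definability of types over $\Zz$), and since it contains a finite-index subgroup of $\Zz^n$ it has dp-rank $n$; second, it invokes \cite[Lemma~3.10]{HaHaPeVF} to transfer this dp-rank computation from $\Zz$ back to $\Gamma$. Your route via Cluckers' cell decomposition is more explicit and avoids the external citation: you locate a cell $C_{i_0}$ whose trace on $\Zz^n$ has full dimension, then use the key structural fact that the linear parts of Presburger cells have integer coefficients to rule out $C_{i_0}$ lying in any proper affine hyperplane. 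The paper's argument is terser because the transfer lemma packages exactly this phenomenon; your argument unpacks it by hand, which is instructive but longer. Note that your step ``$\dim(C_{i_0}\cap\Zz^n)=n$'' still implicitly uses Fact~\ref{F: def over Z} (to make sense of the dimension of $C_i\cap\Zz^n$ in $\Zz_{Pres}$ when $C_i$ carries non-standard parameters), so you are not entirely avoiding that ingredient---you correctly flag this issue in your final paragraph.
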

\begin{proof}
By Fact \ref{F: def over Z}, $Y\cap \mathbb{Z}^n$ is definable in $\Zz_{Pres}$, as a subset of $\mathbb{Z}^n$. Since it contains a finite index subgroup, it has dp-rank $n$. So Clause (1) now follows by \cite[Lemma 3.10]{HaHaPeVF}. For Clause (2) note that since $H$ has finite index, there is $k\in \mathbb N$ such that $k(\Gamma^n)\leq H$, and then $H$ is a union of finitely many cosets of $k(\Gamma^n)$ so definable.
\end{proof}

\begin{lemma}\label{L: family of functions, presburger}
Let $Y\sub \Gamma^n$ be a definable set such that $Y\cap \Zz^n$ contains a subgroup $H$ of $\Zz^n$ of  finite index. Assume that $\{f_t\}_{t\in T}$ is a definable family of definable functions $f_t:Y \to Y$ whose restrictions to $H$ are group homomorphisms. Then: 

\begin{enumerate}
    \item For every $t\in T$, $f_t(H)\subseteq \mathbb{Z}^n$.
    \item The family $\{f_t\restriction H\}$ is uniformly definable in $\Zz_{Pres}$ and therefore finite.
\end{enumerate}
\end{lemma}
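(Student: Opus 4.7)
The plan is to use Presburger cell decomposition together with the fact that Presburger arithmetic cannot express general multiplication of two variables. Since $\Gamma$ is stably embedded in $\CK^{\eq}$, each $f_t$ is definable in $\Gamma$ with parameters from $\Gamma$, so any one- or two-variable section of $f_t$ admits a finite cell decomposition on whose pieces the function takes the form $(A\mathbf{x} + b)/d$ for an integer matrix $A$, a positive integer $d$, and a parameter $b \in \Gamma^n$.

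For clause (1), fix $t \in T$ and a $\Zz$-basis $h_1,\dots,h_n$ of $H$ (which exists since $H$ is a finite-index subgroup of $\Zz^n$, hence free of rank $n$). Set $g_i := f_t(h_i) \in \Gamma^n$, so the homomorphism property yields $f_t(kh_i) = k g_i$ for every $k \in \Zz$. The map $F_i\colon \lambda \mapsto f_t(\lambda h_i)$ is definable in $\Gamma$ on a definable subset containing $\Zz$. By cell decomposition and pigeonhole, some cell $C$ in its domain meets $\Zz$ in an infinite set, and on $C$ we have $F_i(\lambda) = (a_C\lambda + b_C)/d_C$. Evaluating $F_i(k) = k g_i$ at two distinct $k \in C \cap \Zz$ and using torsion-freeness of $\Gamma$ forces $b_C = 0$ and $d_C g_i = a_C \in \Zz^n$. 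Since $\Zz \prec \Gamma$ as Presburger structures and $\Gamma$ is torsion-free, the unique solution of $d_C x = a_C$ in $\Gamma^n$ must coincide with the unique solution in $\Zz^n$; hence $g_i \in \Zz^n$, and $f_t(H) \subseteq \Zz^n$.

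For clause (2), uniform $\Zz_{Pres}$-definability of the family follows from (1) together with Fact \ref{F: def over Z}, using that $\gra(f_t|_H) = \gra(f_t) \cap (H \times \Zz^n)$. For finiteness, consider the definable function $F(t,k) := f_t(kh_i) = k G_i(t)$, where $G_i(t) := f_t(h_i) \in \Zz^n$ by (1). Cell decomposition gives finitely many cells in the domain of $F$, on each of which $F$ has the form $(A_C' t + a_C k + c_C)/d_C$. For fixed $t \in T$, the $k$-fibers of the cells cover $\Zz$, so at least one cell $C(t)$ has infinite $k$-fiber over $t$; equating $F$ with $k G_i(t)$ as $k$ varies in this infinite fiber forces $G_i(t) = a_{C(t)}/d_{C(t)}$, depending only on $C(t)$. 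Thus each $G_i$ takes only finitely many values, and consequently the family $\{f_t|_H\}$, determined by the tuple $(G_1(t),\dots,G_n(t))$, is finite. The main obstacle is precisely this last step: a Presburger-definable family of integer matrices might appear to be arbitrarily rich, but the product $k G_i(t)$ in the homomorphism relation is an implicit multiplication of two variables, and Presburger can express this only by forcing $G_i$ to be piecewise constant with finitely many values.
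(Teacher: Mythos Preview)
Your argument is correct. Both your proof and the paper's rest on Presburger cell decomposition, but you deploy it differently. For (1), the paper chooses an $A$-generic $c\in Y$, finds an $n$-dimensional generalised box $B\ni c$ on which $f_t$ is $\Zz$-linear (so that $f_t(c+k\bar r)-f_t(c)\in\Zz$ for $\bar r\in\Zz^n$, with $k$ the common denominator of the box), and from this extracts $f_t(km_i)\in\Zz$ for each generator $m_i$ of $H$, whence $f_t(m_i)\in\Zz$. You instead restrict to the one-variable function $\lambda\mapsto f_t(\lambda h_i)$ along each basis direction of $H$ and cell-decompose that; the conclusion $d_C g_i=a_C\in\Zz^n$, combined with $\Zz\prec\Gamma$ and torsion-freeness, drops out immediately. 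Your route sidesteps the passage between the behaviour of $f_t$ on a box around a generic point of $Y$ and its behaviour on the subgroup $H\sub\Zz^n$, which is the more delicate step in the paper's version.

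For (2), the paper simply invokes Fact~\ref{F: def over Z} for uniform $\Zz_{Pres}$-definability and then cites the general fact (via \cite{OnVi}) that any Presburger-definable family of group homomorphisms is finite. Your two-variable cell decomposition unpacks exactly why this holds: the relation $F(t,k)=kG_i(t)$ for integer $k$ forces, on any cell whose $k$-fibre over $t$ contains infinitely many integers, that $G_i(t)$ equals the fixed ratio $a_C/d_C$ attached to that cell. One small imprecision worth correcting: you need the cell $C(t)$ to have \emph{infinitely many integers} in its $k$-fibre over $t$, not merely an infinite fibre in $\Gamma$; but this follows by the same pigeonhole you already used in (1), since the finitely many fibres cover a set containing $\Zz$.
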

\begin{proof}
Assume everything is definable over some parameter set $A$. By stable embeddedness of $\Gamma$, the family $\{f_t: t\in T\}$ is uniformly definable in $\Gamma$ so we may assume that $T\sub \Gamma^k$. Since $H$ is a subgroup of finite index of $\mathbb{Z}^n$ it is generated by some finite set $\{m_1,\dots,m_s\}$.

(1) Fix some $t\in T$.  It will suffice to prove the claim for each coordinate function of $f_t$ separately. So we may assume $f_t:Y\to \Gamma$.  Let $c\in Y$ be $A$-generic in $Y$.

Since $\dpr(Y)=n$ it follows from  cell decomposition,  \cite[Theorem 1]{ClucPresburger}, and  \cite[Lemma 3.4] {OnVi} that there is an $A$-definable $n$-dimensional generalised box, $B=\prod_i J_i\sub Y$, centred at $c=(c_1,\ldots, c_n)\in B$,  such that 
\[
(f_t\restriction B)(x)=\sum_i s_i\left( \frac{x-t_i}{k_i}\right) +\gamma,
\] 
with  $\gamma\in \Gamma^n$, $s_i,t_i,k_i\in \mathbb N$  and $J_i=I_i\cap \{x-t_i\in P_{k_i}\}$, for some infinite interval $I_i$.


By shrinking $B$, if needed (over the same parameters), we may assume that $B$ is a product of boxes of the form $I_i\cap P_k(x_i-t_i)$ (i.e., that $k_i=k$ for all $i$). 

Note that $c+k\bar r\in B$  and that $f_t(c+k\bar r)-f_t(c)\in \mathbb{Z}$, for any $\bar r\in \mathbb{Z}^n$. In particular, if  $m_i$, $1\leq i\leq s$, is any of the above-mentioned generators of $H$, as $km_i\in H$ we conclude that $f_t(km_i)\in \mathbb{Z}$, but since $f_t(km_i)=kf_t(m_i)$ this implies that $f_t( m_i)\in \mathbb{Z}$ and, as this is true of a set of generators of $H$, we see that $f_t(H)\subseteq \mathbb{Z}$, as claimed.

(2) The first part of the claim  is a consequence of Fact \ref{F: def over Z} using Lemma \ref{F: full dp-rank, presburger}. The second part follows from quantifier elimination in Presburger arithmetic, by noting that  any definable family of group homomorphisms is finite (see also \cite[Fact 2.4 and Fact 2.6]{OnVi}). 
\end{proof}

The next results shows, in analogy with Proposition \ref{P:not semisimple K/O}, that groups strongly internal to $\Gamma$ are not semisimple:  
\begin{proposition}\label{P:not semisimple Gamma}
Let  $G$ be a definable infinite group in $\CK$. If $G$ is locally strongly internal to $\Gamma$ then there exists a definable $G_1\trianglelefteq G$ of finite index 
such that $\nu_{\Gamma} \vdash Z(G_1)$. In particular, $\dpr(Z(G_1)) $ is at least the $\Gamma$-rank of $G$. 
\end{proposition}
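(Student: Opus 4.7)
The strategy is to use the canonical local parameterization of Lemma~\ref{L: nu}(2) to reduce the conjugation action of $G$ on $\nu_\Gamma$ to a definable family of local group endomorphisms of $\Gamma^n$ near $0$, and then apply the Presburger finiteness of Lemma~\ref{L: family of functions, presburger}. I would start by fixing a $\Gamma$-critical $X\subseteq G$ with $\nu_\Gamma\vdash X$ and a definable bijection $f\colon X\to Y\subseteq \Gamma^n$ onto a generalised box around $0$, satisfying $f(xy^{\pm 1})=f(x)\pm f(y)$ on $X$. Since $\nu_\Gamma$ is canonically attached to $G$, every inner automorphism $c_g\colon x\mapsto gxg^{-1}$ preserves $\nu_\Gamma$. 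Pushing $c_g$ through $f$ yields a definable family of partial maps $\sigma_g\colon D_g\to \Gamma^n$ with $f(\nu_\Gamma)\subseteq D_g$, each a local group homomorphism: $\sigma_g(u+v)=\sigma_g(u)+\sigma_g(v)$ whenever $u,v,u+v\in D_g$.

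To put all these maps on a common total domain, I would pass to $X_*:=\{x\in X:c_g(x)\in X\text{ for every }g\in G\}$ and $Y_*:=f(X_*)$. Because $c_g(\nu_\Gamma)=\nu_\Gamma\subseteq X$ for all $g$, we still have $\nu_\Gamma\vdash X_*$ and $\dpr(Y_*)=n$, and each $\sigma_g$ is now total on $Y_*$ with image in $Y_*$. By Fact~\ref{F: def over Z} the trace $Y_*\cap \mathbb{Z}^n$ is definable in $\Zz_{Pres}$, and since $Y_*$ has full dp-rank $n$ in $\Gamma^n$, this trace contains a finite-index subgroup $H$ of $\mathbb{Z}^n$ (cf.\ Fact~\ref{F: full dp-rank, presburger}). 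By Lemma~\ref{L: family of functions, presburger}(1) each $\sigma_g$ maps $H$ into $\mathbb{Z}^n$, and by part~(2) the family $\{\sigma_g\restriction H\}_{g\in G}$ is finite.

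The assignment $\alpha\colon G\to \mathrm{End}(H)$, $g\mapsto \sigma_g\restriction H$, is a group homomorphism (since $c_{gh}=c_g\circ c_h$) with finite image, so its kernel $G_1$ is a normal subgroup of finite index, and is definable as a finite union of definable fibers of $\alpha$. To conclude $\nu_\Gamma\vdash Z(G_1)$, fix $g\in G_1$; then $\sigma_g$ is the identity on $H$. Since $\sigma_g$ is a local group homomorphism on $Y_*$ and $\Gamma^n$ is torsion-free, the Presburger description of $\sigma_g$ as a piecewise $\mathbb{Q}$-affine function forces its linear part to be the identity once the restriction to the finite-index subgroup $H\le \mathbb{Z}^n$ is trivial. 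Hence $\sigma_g$ is the identity on all of $Y_*$, in particular on $f(\nu_\Gamma)$; so $c_g$ fixes $\nu_\Gamma$ pointwise, proving $\nu_\Gamma\vdash Z(G_1)$. The dp-rank bound is immediate as $\dpr(\nu_\Gamma)=n$ equals the $\Gamma$-rank of $G$ by Lemma~\ref{L: nu}(2).

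The main obstacle is the propagation step: turning triviality of $\sigma_g$ on the integer subgroup $H$ into triviality on the type $f(\nu_\Gamma)$, which sits in the ``infinitesimal'' portion of $Y_*$. This requires combining the local-homomorphism property of $\sigma_g$ with the Presburger classification of definable functions and torsion-freeness of $\Gamma^n$. A secondary technicality is verifying that $Y_*\cap \mathbb{Z}^n$ genuinely contains a finite-index subgroup of $\mathbb{Z}^n$ and that the family $\{\sigma_g\}$ has the form required by Lemma~\ref{L: family of functions, presburger}; this rests on the dp-rank/Presburger dictionary from Section~\ref{section 2.4}.
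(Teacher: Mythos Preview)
Your overall architecture matches the paper's proof closely: embed a $\Gamma$-critical piece of $G$ into $\Gamma^n$ via Lemma~\ref{L: nu}(2), pass to a conjugation-invariant subset, locate a finite-index subgroup $H\le\mathbb{Z}^n$ inside it, apply Lemma~\ref{L: family of functions, presburger} to conclude the family $\{\sigma_g\restriction H\}_{g\in G}$ is finite, and take $G_1$ to be the kernel of the resulting map into $\mathrm{Aut}(H)$. The paper does exactly this (with $Y=\bigcap_g B^g$ playing the role of your $Y_*$, and using Lemma~\ref{L:bound on index} to produce $H$).

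There are two problems with your write-up. The minor one: your justification that $Y_*\cap\mathbb{Z}^n$ contains a finite-index subgroup of $\mathbb{Z}^n$ cites Fact~\ref{F: full dp-rank, presburger} in the wrong direction. Full dp-rank alone does not give you a finite-index subgroup (a shifted box has full rank and misses $0$). The correct reason is that $\nu_\Gamma\vdash X_*$, so by compactness $Y_*$ contains a generalized box around $0$, and \emph{that} intersected with $\mathbb{Z}^n$ is a finite-index subgroup.

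The substantive gap is the propagation step. From $\sigma_g\restriction H=\id$ together with the local-homomorphism property and cell decomposition you can indeed deduce that on each $n$-dimensional cell $\sigma_g$ has the form $x\mapsto x+\delta_C$ for some constant $\delta_C$ --- the linear part is the identity, as you say. But nothing in your argument forces $\delta_C=0$: the cells meeting $H$ sit inside $\mathbb{Z}^n$, whereas $f(\nu_\Gamma)$ lives in entirely different cells, and the piecewise description does not link the constants across cells. Your sentence ``Hence $\sigma_g$ is the identity on all of $Y_*$'' does not follow. The paper closes this gap differently and more simply: once $H\subseteq Z(G_1)$ and $H$ is $\mathbb{Z}_{\mathrm{Pres}}$-definable, the first-order statement ``$[-a,a]^n\cap H(\Gamma)\subseteq Z(G_1)$'' holds for every $a\in\mathbb{N}$, and overspill (in the saturated $\Gamma$) yields an infinite $a_0$, producing a genuine generalized box around $0$ inside $Z(G_1)$. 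That gives $\nu_\Gamma\vdash Z(G_1)$ immediately. Your route can be salvaged (e.g.\ observe that $\psi:=\sigma_g-\id$ has finite image on the full-rank cells, so $\psi\restriction\nu_\Gamma$ is a homomorphism from the divisible group $\nu_\Gamma$ into a finite subset of the torsion-free group $\Gamma^n$, hence zero), but this is not what you wrote, and the overspill argument is cleaner.
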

\begin{proof}
By Lemma \ref{L: nu} there are a definable subset $X\subseteq G$, with $\nu_{\Gamma}\vdash X$, and a definable function, $f:X\to \Gamma^n$, with $\dpr(X)=n$ for $n$ the $\Gamma$-rank of $G$. For simplicity of notation, identify $X$ with its image in $\Gamma^n$. We may further assume that $G$-multiplication coincides with addition and the same for inverse.
By Lemma  \ref{L: nu}, we may further assume that $\nu_{\Gamma}\vdash X$ is the intersection of generalized boxes around $0$. We fix one such generalized box $B\subseteq X\subseteq \Gamma^n$, $\nu_\Gamma\vdash B$. 
   
   By \cite[Proposition 5.8]{HaHaPeGps}, $g\nu_{\Gamma} g^{-1}=\nu_{\Gamma} $ for every $g\in G$ and thus $\nu_{\Gamma}\vdash B^g\cap B$. By compactness, for every $g\in G$, there exists a generalized box $B_0\subseteq B\cap B^g$ around $0$. As we noted above,   $B\cap \mathbb{Z}^n$ is a subgroup of $\mathbb{Z}^n$ of finite index (though $B^g$ need not be  contained in $\Gamma^n$).
    
    By Lemma \ref{L:bound on index} there is some natural number $k$ such that for any $g\in G$, $B^g\cap B$ contains a box $B_g$ with $B_g\cap \mathbb{Z}^n$ a subgroup of index at most $k$ in $\Zz^n$. Consequently, there exists some subgroup $H\subseteq \mathbb{Z}^n$ of finite index such that $H\subseteq B\cap B^g\cap \mathbb{Z}^n$ for all $g$.

    Let $Y=\bigcap\limits_{g\in G}B^g$. It is a definable set, contained in $B\subseteq \Gamma^n$, invariant under conjugation by all elements of $G$ and containing $H$.
     Let $Y_0:=Y\cap \mathbb{Z}^n$  (note that $H\subseteq Y_0$) and let $\tau_g:Y\to Y$  denote the restriction of conjugation by $g$ (in $G$) to $Y$. By Lemma \ref{L: family of functions, presburger}(1), $\tau_g(H)\subseteq \mathbb{Z}^n$. By Lemma \ref{L: family of functions, presburger}(2), $\{\tau_g\restriction H\}_{g\in G}$ is a  family of group homomorphisms uniformly definable in $\mathbb{Z}$, so it is finite. We may now replace $H$ by the (finite) intersection of all $\tau_g(H)$, and obtain another subgroup of finite index of $\mathbb Z^n$. Thus, we may assume that $H$ is invariant under all $\tau_g$.

    Let $R$ be a finite set of generators for $H$ and let $E(g,h)$ be the definable equivalence relation on $G$ given by $d^g=d^h$ for all $d\in R$. For any $g,h\in G$ both $\tau^g\restriction H$  and $\tau^h\restriction H$  respect addition,  hence $E(g,h)$ holds if and only if $\tau_g\restriction H=\tau_h\restriction H$. The  quotient $G/E$ can be identified with a finite subgroup of $\aut(H)$, and the map $\sigma:G \to G/E$ is a definable group homomorphism. Its kernel, $G_1$, is a definable normal subgroup of $G$ of finite index, which by definition centralizes $H$, hence $H\sub Z(G_1)$. We claim that $\nu_{\Gamma}\vdash Z(G_1)$.
    
    By Lemma \ref{F: full dp-rank, presburger}(2), $H$ is definable in $\Zz_{Pres}$ and  $Z(G_1)$ contains all finite boxes of the form $[-a,a]^n\cap H$, for $a\in \mathbb N$. Since $H$ is definable, $Z(G_1)$ must contain a set of the form $I^n\cap H(\CK)$, for an infinite interval $I\sub \Gamma$, so in particular, it contains a generalized box. It follows that $\nu_{\Gamma}\vdash Z(G_1)$.
\end{proof}

\subsection{$K$-groups}\label{Ss: K-gps}
Let $\CK$ and $G$  be as above, and assume that $G$ is a $K$-group, so locally strongly internal to $K$. As before $\mathbb F$ denotes an elementary submodel of $\CK$ isomorphic to a finite extension of $\qp$. 
In \cite[Theorem 7.11(1)]{HaHaPeGps} we have shown that if $G$ is locally almost strongly internal to $K$ then the associated type-definable infinitesimal subgroup $\nu_K$ can be endowed with a differential structure. In the present section, we develop a very rudimentary Lie theory for $\nu_K$ (equivalently, for local subgroups of $G$, strongly internal to $K$). We do not attempt to develop the theory from its foundations. Rather, working in $p$-adically closed fields, we transfer known results from $\qp$ (and its finite extensions) by working in definable families. This is our main reason for working in $p$-adically closed fields.

Recall the following (compare with \cite[Part II, Chapter IV.3]{serre-lie}):
\begin{definition} \label{D: loc gp}
Let $(K,v)$ be a valued field. A \emph{local group} is a tuple $\mathcal{G}=(X,Y,m,\iota,e;\varphi)$ such that 
\begin{itemize}
    \item The function $\phi: X\to K^n$ is a homeomorphism between the  topological space $X$ and  an open subset $U\sub K^n$;
    \item $Y\subseteq X$ is  open;
    \item $m:Y\times Y\to X$ and $\iota:Y\to Y$ are continuous functions;
    \item $e\in Y$ 
\end{itemize}
such that:
\begin{enumerate}
    \item For any $x\in Y$, $x=m(x,e)=m(e,x)$
    \item For any $x\in Y$, $e=m(x,\iota(x))=m(\iota(x),x)$.
    \item For any $Z\subseteq Y$ open subset containing $e$, with $m(Z\times Z)\subseteq Y$ and for all $x,y,z\in Z$, $m(x,m(y,z))=m(m(x,y),z)$.
\end{enumerate}

The local group $\mathcal{G}$ is differentiable (resp. $\CC^r$, analytic)  if  $\phi( m(\phi^{-1}(x), \phi^{-1}(y))$  and $\phi (\iota(\phi^{-1}(x))$  are differentiable (resp. $\CC^r$, analytic).

The local group  $\mathcal{G}$ is \emph{definable} in $\CK$, if all the objects, morphisms and $\varphi$ are $\CK$-definable. 
\end{definition}

Note that if  $(G,\cdot,  e)$ is a topological group then for any  $X\sub G$, an open neighbourhood of $e$, there exists and open $Y\sub X\sub G$ such that the pair $X,Y$  with $G$-multiplication and inverse restricted to $Y$ form a local group.  Such a pair $(X,Y)$ is \emph{a local subgroup of $G$}. 

\begin{definition}
    Let $\mathcal{G}=(X,Y,m, e, \iota;\varphi )$ and $\mathcal{G}'=(X',Y',m', e', \iota';\varphi' )$ be local groups. A \emph{local homomorphism} $f:\mathcal{G}\to\mathcal{G}'$  is a continuous function $f: U\to X'$, where $U$ is an open neighbourhood of $e$, such that $f(e)=e'$ and $f(m(x,y))=m(f(x), f(y))$ in a neighbourhood of $e$.    If $\CG, \CG'$ are differentiable (resp. $C^n$, analytic) local groups, then such an $f$ is \emph{differentiable} (resp. $\CC^r$, analytic) if $\varphi'\circ f\circ \varphi^{-1}$ is differentiable (resp. $\CC^r$, analyitic). 
\end{definition}

The following statement and its proof are similar to other results of the same nature, see for example \cite[Lemma 3.8]{PilQp} and the main result of  \cite{Pi5}) for details, so we are terse.

\begin{lemma} \label{lemma analytic} Every definable local $C^k$-group in $\mathbb F$ is definably locally $C^k$-isomorphic to a definable local analytic group. 
\end{lemma}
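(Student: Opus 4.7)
The plan is to adapt, in the local setting, the strategy used by Pillay (\cite{Pi5}, \cite[Lemma 3.8]{PilQp}) to put an analytic structure on groups definable in $\qp$. After composing with $\varphi$, we may assume $X,Y$ are definable open subsets of $\mathbb F^n$ with $e=0$ and $\varphi=\mathrm{id}$. The goal is to produce a definable $C^k$-diffeomorphism $\psi$ from a smaller neighborhood of $0$ onto its image, such that $\psi\circ m\circ(\psi^{-1}\times\psi^{-1})$ is analytic.

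The first main step is to extract a Lie algebra from $m$. Since $m$ is $C^k$ (with $k\ge 2$) and definable, its partial derivatives at $(0,0)$ up to order $k$ are definable. Writing the second-order Taylor expansion
\[
m(x,y)=x+y+B(x,y)+R(x,y),
\]
with $B:\mathbb F^n\times\mathbb F^n\to\mathbb F^n$ a definable bilinear map and $R$ a definable $C^{k-2}$ remainder of order $\ge 3$, associativity of $m$ near $e$ forces the antisymmetrization $[x,y]:=B(x,y)-B(y,x)$ to satisfy the Jacobi identity. We thus obtain a definable Lie-algebra structure on $\mathbb F^n$, with bracket defined from the same parameters as $m$.

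Next, the Baker--Campbell--Hausdorff series associated with this bracket converges on a small closed ball $V\subseteq \mathbb F^n$ around $0$ (the radius of $V$ depending only on the ramification of $\mathbb F$ and on a uniform bound for the structure constants of $[\cdot,\cdot]$), where it defines an analytic local group operation $\mu$. Since each term of BCH is a polynomial in the bracket, the restriction of $\mu$ to the closed ball $V$ is analytic and, by \cite{DenvdDr} (or Macintyre's quantifier elimination applied in the semi-algebraic language enlarged by the relevant convergent series), definable in $\mathbb F$. The resulting local group $(V,V',\mu,\ldots)$ is then a definable analytic local group.

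Finally, the classical theorem on canonical coordinates in $p$-adic Lie theory produces a $C^k$-diffeomorphism $\psi$ of a neighborhood of $0$ onto its image, characterized \emph{uniquely} by $\psi(0)=0$, $d\psi(0)=\mathrm{id}$, and the intertwining identity $\psi\circ m=\mu\circ(\psi\times\psi)$. The hard part of the argument is precisely to show that this abstractly-produced $\psi$ lies in the definable category. Following \cite[Lemma 3.8]{PilQp}, one applies the $p$-adic inverse function theorem to a definable system of equations expressing the intertwining relation together with the normalization at $0$; the solution is unique and agrees (by the uniqueness clause of the canonical-coordinate theorem) with the $\psi$ supplied by classical Lie theory, hence $\psi$ is definable. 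This yields the required definable local $C^k$-isomorphism between $\mathcal G$ and a definable analytic local group.
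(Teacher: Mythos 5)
There is a genuine gap, and it is located exactly where your sketch leans hardest on classical Lie theory. In this lemma ``definable'' means definable in $\mathbb F$ as a $p$-adically closed valued field, i.e.\ semialgebraic in the sense of Macintyre \cite{Macp-adic}. The Baker--Campbell--Hausdorff multiplication $\mu$ attached to your bracket, and even more so the canonical-coordinate map $\psi$ intertwining $m$ and $\mu$, are given by genuinely transcendental convergent power series; restricted to a small ball they are analytic but in general \emph{not} semialgebraic (already $x\mapsto\exp(x)$ on a small ball is not definable in $\qp$). Invoking \cite{DenvdDr} or quantifier elimination ``in the language enlarged by the relevant convergent series'' changes the structure: it would at best produce a local group definable in $\qp^{an}$, which is not what the statement asserts and not what its application (Lemma \ref{L:bourbaki for local groups}, where the conclusion must be first order in the original language, in families) needs. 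Moreover, the ``classical theorem on canonical coordinates'' you appeal to is only available in the literature for \emph{analytic} $p$-adic groups; for merely $C^k$ local groups the real-variable construction (one-parameter subgroups, integration of vector fields) has no off-the-shelf $p$-adic analogue --- this is precisely the point of Remark \ref{R: general Lie}. Finally, the proposed definability argument for $\psi$ does not work as stated: the intertwining identity $\psi\circ m=\mu\circ(\psi\times\psi)$ is a functional equation, not a finite system to which the inverse function theorem applies, and uniqueness of its solution does not by itself confer definability.

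The paper's proof avoids introducing any new functions at all, and this is the trick you are missing (it is also what \cite{Pi5} and \cite[Lemma 3.8]{PilQp} actually do). By Scowcroft--van den Dries \cite{Sc-vdDries}, every definable function over $\mathbb F$ is analytic off a definable set of smaller dimension; hence the already definable map $F(x,y,z)=xy^{-1}z$ is analytic on $Y_0^3$ for some definable open $Y_0\subseteq Y$. One then fixes $a\in Y_0$ and simply recenters the chart by the right-translation $x\mapsto x\cdot a$: in this new (definable!) chart the unchanged group operations are analytic near $e$, so the same underlying local group, with the translated chart, is a definable analytic local group, and the identity map is the required local isomorphism; it is $C^k$ because definable maps are $C^k$ on large sets. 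In other words, one does not replace the group law by an analytic model and transport the structure --- one changes coordinates inside the definable category so that the original law becomes analytic.
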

    
    \begin{proof}
         For the sake of clarity, we use standard multiplicative notation to denote the local group operations.  So let  $\mathcal{G}=(X,Y,\cdot ,  ^{-1},1;\varphi )$ be a definable local $C^k$-group in $\mathbb F$. Changing names, we may assume that  $\phi=\id$ and $X$ is an open subset of $\mathbb F^n$.
    
        By \cite[Lemma 1.3]{Sc-vdDries}, if $U\sub \mathbb F^n$ is open, then every definable function $f:U\to V$ in $\mathbb Q_p$ is analytic on a  definable large open $U_0\subseteq U$ (i.e.,  $\dpr(U\setminus U_0)<\dim(U)$). This remains true in  finite extensions of $\mathbb{Q}_p$ by \cite[Section 5]{Sc-vdDries}.  Using this fact, we can apply (the proof of)   \cite[Lemma 4.38]{HaHaPeGps}  to find an open definable subset $Y_0\subseteq Y$ such that the function $F(x,y,z):=xy^{-1}z$  is defined and analytic on $Y_0^3$. Fix $a\in Y_0$, and let $\hat \phi:Y_0a^{-1}\to Y_0$ be given by $x\mapsto x\cdot a$. We now let $X_1=Y_0a^{-1}$ and $Y_1=(Y\cap X_1)\cap (Y\cap X_1)^{-1}$, both are open definable sets. It is easy to verify that $\CG'=(X_1,Y_1,\cdot ,^{-1},e;\hat \phi)$ is a definable analytic local group. Obviously, the identity map between $\CG'$ and $\CG$ is a local homomorphism between local groups. Since definable functions are $C^k$ on large sets, this identity map is a $C^k$-isomorphism from $\CG$ to $\CG'$.
    \end{proof}
    
The next lemma is the main Lie theory result  we need. See a remark after the proof.

\begin{lemma}\label{L:bourbaki for local groups}
        Let $\mathcal{G}=(X,Y,\cdot, ^{-1},e;\varphi)$ be a definable differentiable local group for $X\sub K^n$,  and $f: Y\to X$ a differentiable local group homomorphism. If $D_e(f)=\id$ then $\{y\in Y:f(y)=y\}$ is an open subset of $Y$ containing $e$. 
\end{lemma}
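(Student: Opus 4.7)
My plan is to reduce the statement, via elementary equivalence, to the case where $\mathcal{G}$ is an analytic local group over $\mathbb F$, and then to run the classical formal power-series argument showing that an analytic local homomorphism is determined, on a neighborhood of $e$, by its derivative at $e$.

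First, I will transfer the problem to $\mathbb F$. Both the conclusion---``there exists a ball around $e$ on which $f=\id$''---and the hypothesis $D_ef=\id$ are expressible as first-order formulas in the parameters defining $\mathcal{G}$ and $f$. Indeed, in a coordinate chart $\varphi$, the latter is equivalent to
\[
\forall\alpha\in\Gamma\,\exists\beta\in\Gamma\,\forall y\,\bigl(v(\varphi(y)-\varphi(e))>\beta\Rightarrow v(\varphi(f(y))-\varphi(y))>\alpha+v(\varphi(y)-\varphi(e))\bigr).
\]
Since $K\equiv\mathbb F$, it suffices to prove the lemma when the parameters all lie in $\mathbb F$.

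Over $\mathbb F$ I will upgrade regularity. By the analyticity results of \cite{Sc-vdDries}, every definable differentiable function is analytic on a large definable open subset of its domain; combined with Lemma~\ref{lemma analytic}, after a definable $C^k$-change of coordinates I may assume the multiplication and inversion of $\mathcal{G}$ are analytic, and that $f$ is analytic on a large open $U_0\subseteq Y$. Since $U_0$ is dense in $Y$, there exist $p\in U_0$ arbitrarily close to $e$; for any such $p$, the homomorphism identity $f(y)=f(p)^{-1}\cdot f(py)$, valid for $y$ in a small neighborhood of $e$, expresses $f$ near $e$ as a composition of analytic functions. Hence $f$ is analytic on an open neighborhood of $e$.

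Having secured analyticity at $e$, and working in coordinates with $e=0$, I will expand $f(y)=y+\sum_{k\geq 2}F_k(y)$ with $F_k$ homogeneous of degree $k$, and the multiplication as $\mu(x,y)=x+y+C(x,y)$, where every term of $C$ has bidegree $(i,j)$ with $i,j\geq 1$. An induction on $k\geq 2$, in which one matches terms of total degree $k$ on both sides of $f(\mu(x,y))=\mu(f(x),f(y))$ and uses the inductive vanishing $F_2=\cdots=F_{k-1}=0$, yields $F_k(x+y)=F_k(x)+F_k(y)$; homogeneity of degree $\geq 2$ then forces $F_k=0$. Thus the power series of $f$ at $e$ coincides with the identity, so $f=\id$ on the (open) domain of convergence, giving the desired open neighborhood of fixed points. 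The main obstacle I anticipate is the one addressed in the second step: propagating analyticity of $f$ from a large generic open subset of $Y$ to $e$ itself, which the homomorphism-translation identity resolves by exploiting the homogeneity built into the group structure.
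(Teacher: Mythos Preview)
Your proof is correct and takes a genuinely different route from the paper's after the common opening moves. Both arguments reduce to $\mathbb F$ by first-order expressibility and upgrade $\mathcal G$ to an analytic local group via Lemma~\ref{lemma analytic}. From there they diverge: the paper invokes structural $p$-adic Lie theory---Serre's result that a small enough ball is an analytic Lie group, the analytic inverse function theorem to make $f$ a local automorphism, and then Bourbaki's theorem that the fixed-point set of an analytic automorphism is a Lie subgroup with the same Lie algebra---to conclude openness. You instead (i) push analyticity of $f$ from a large generic open set to $e$ via the translation identity $f(y)=f(p)^{-1}\cdot f(py)$, and (ii) run the direct formal-power-series induction showing that $F_k(x+y)=F_k(x)+F_k(y)$, whence $F_k=0$ in characteristic~$0$ by the $m^k$-versus-$m$ scaling trick.

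What each buys: your argument is more elementary and self-contained, needing no Lie-theory citations beyond those already underlying Lemma~\ref{lemma analytic}; it also makes explicit a point the paper leaves tacit, namely that $f$ itself is analytic near $e$ (the paper applies the analytic inverse function theorem and Bourbaki's fixed-point theorem without separately justifying this, presumably relying on automatic analyticity of homomorphisms between $p$-adic analytic groups). The paper's route, by contrast, yields the stronger structural conclusion that $\{y:f(y)=y\}$ is a Lie subgroup, though only openness is actually used.
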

\begin{proof}
    We prove that $\{y\in Y:f(y)=y\}$ contains a definable open subset, which is enough to conclude. Let $\mathbb{F}$ be a finite extension of $\mathbb{Q}_p$, embedded into $K$. 
    Since the lemma is first order in families, it will suffice to prove it in $\mathbb{F}$. So fix  a local group $\mathcal{G}=(X,Y,\cdot, ^{-1},e;\varphi)$ and a local group homomorphism $f:Y\to X$ all as in the statement. There is no harm in assuming that $\varphi=\id$, i.e. that $Y\subseteq X\subseteq \mathbb{F}^n$ are open balls around $e$.

    By Lemma \ref{lemma analytic}, we may assume, without loss of generality,  that $\CG$ is a definable analytic local group .
     By \cite[Part II, Chapter IV.8]{serre-lie} (see also \cite[Lemma 1.14]{helge-lectures}), there exists some $\gamma\in \Gamma_{\mathbb{F}}$, such that $B_{>\gamma}(e)\subseteq Y_0$ and that $(B_{>\gamma}(e), m,\iota,e)$ is an analytic Lie subgroup of $\mathcal{G}$, and furthermore the same is true for any $\gamma_0>\gamma$. By replacing $\mathcal{G}$  with $B_{>\gamma}(e)$, we may thus assume that $\mathcal{G}$ is an analytic Lie group (rather than a local one).

    We may now apply the inverse function theorem for analytic functions, \cite[Lemma 1.11]{helge-lectures}. Since $D_e(f)=\id$, there is some $\gamma_0>\gamma$ such that $f(B_{>\gamma_0}(e))=B_{>\gamma_0}(e)$ and that $f\restriction B_{>\gamma_0}(e)$ is injective.
    So now $B_{>\gamma_0}(e)$ is an analytic Lie group and $f$ an analytic automorphism thereof.
    
    By \cite[Chapter 3.3.8, Corollary of Proposition 29] {bourbaki-lie}, $W:=\{y\in B_{>\gamma_0}(e): f(y)=y\}$ is a sub Lie group of $B_{>\gamma_0}(e)$ with the same Lie algebra. It now follows from the theory of $p$-adic Lie groups  (e.g.,  \cite[Part II, Chapter III.9]{serre-lie}) that $W$ is open. It follows that $\{y\in Y:f(y)=y\}$ contains an open neighbourhood of $e$ in $Y$, as it is a topological group it implies that it is open as well. 
\end{proof}

\begin{remark} \label{R: general Lie}
The  literature on $p$-adic Lie groups known to us is restricted,  almost solely,  to analytic groups. Thus, in the above argument, we had to work over $\mathbb F$, obtain an analytic local group, use the classical theory of analytic Lie groups to obtain the first order statement we needed, and transfer this statement to arbitrary $p$-adically closed fields. It will be desirable to develop the necessary theory for definable differentiable functions in expansions of $p$-adically closed fields, under the appropriate assumptions. 
\end{remark}

\begin{lemma}\label{L:existence of local group}
    Let $(G,\cdot,^{-1},e)$ be a definable group of positive dimension, $W\ni e$ an open subset of $G$. Then there exists a definable differentiable local subgroup $(X,Y,\cdot, ^{-1},e;\varphi)$ with $X\sub W$ an open subset and $\varphi: X\to U$  a definable bijection, where $U\subseteq K^n$ is a ball.
    
    Moreover, for any $g\in G$ the map $\tau_g: x\mapsto x^g$ induces on $X$ a definable differentiable local group isomorphism.
\end{lemma}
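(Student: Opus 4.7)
The plan is to follow the construction in the proof of Lemma \ref{lemma analytic}, but directly in $\CK$ and only in the differentiable (rather than analytic) category. The input we need is the first-order statement that every definable function is $C^1$ on a definable large open subset of its domain, which transfers from $\mathbb F$ (via \cite[Lemma 1.3]{Sc-vdDries} and \cite[Section 5]{Sc-vdDries}) to $\CK$.

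First, using Lemma \ref{L: lasi=lsi=dim>0} and \cite[Corollary 4.37]{JohnTopQp}, I obtain a $\tau_K$-open $X_0 \sub G$ and a definable homeomorphism $\varphi_0 : X_0 \to U_0 \sub K^n$ with $U_0$ open and $n = \dim(G)$. Right-translating $X_0$ (and adjusting $\varphi_0$) I arrange that $e \in X_0$, and by intersecting with $W$ and restricting $\varphi_0$ I may further assume $X_0 \sub W$. Transferring the group operations to coordinates via $\varphi_0$ gives definable partial functions $m_*, \iota_*$ on $U_0$, and considering $F(x,y,z) = m_*(m_*(x, \iota_*(y)), z)$, the coordinate version of $g h^{-1} k$, the arguments in the proof of \cite[Lemma 4.38]{HaHaPeGps} yield a definable open $V \sub U_0$ such that $F$ is differentiable on $V^3$. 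Fixing $b \in V$ and $a_0 = \varphi_0^{-1}(b)$, I change chart to $\hat\varphi(x) = \varphi_0(x a_0) - b$, under which $\hat\varphi(e) = 0$, the new multiplication $\hat m(u,v) = F(u+b, b, v+b) - b$ is differentiable, and a parallel calculation gives differentiability of inversion (alternatively, apply the implicit function theorem to the already-differentiable multiplication). Restricting $\hat\varphi$ to a small ball $U \subseteq K^n$ around $0$ that is contained in the image of the new chart yields $X := \hat\varphi^{-1}(U) \sub W$ and $\varphi := \hat\varphi\!\rest\! X$; by continuity of the group operations one then extracts an open $Y \sub X$ containing $e$ with $Y \cdot Y \sub X$ and $Y^{-1} \sub Y$, giving the required local subgroup.

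For the moreover part, fix $g \in G$. Conjugation $\tau_g$ is continuous and fixes $e$, so it restricts to a continuous local group homomorphism on some open neighbourhood of $e$ inside $X$. In the $\varphi$-chart this is a definable partial function $\psi_g$, which, by the above transfer of Scowcroft--van den Dries, is differentiable on a definable large open subset $V_g$ of its domain. Pick $c \in V_g$ near $0$ and set $a = \varphi^{-1}(c)$; for $x$ in a small neighbourhood of $e$, $xa$ lies in a neighbourhood of $a$ on which $\psi_g$ is differentiable, and the identity $\tau_g(x) = \tau_g(xa) \cdot \tau_g(a)^{-1}$, together with differentiability of multiplication and inversion in the local group, show that $\psi_g$ is differentiable on an open neighbourhood of $e$. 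Applying the same argument to $\tau_{g^{-1}}$ gives the inverse, so $\tau_g$ restricts to a differentiable local group isomorphism onto its image in $X$. The main difficulty throughout is exactly this step of propagating the Scowcroft--van den Dries differentiability from an arbitrary large open set to a neighbourhood of $e$: for multiplication and inversion it is handled by the right-translation trick of Lemma \ref{lemma analytic} via the auxiliary function $F$, and for conjugation by the group-homomorphism spreading argument using the already-established differentiable local group structure.
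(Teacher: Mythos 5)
Your proposal is correct in substance, but it takes a more hands-on route than the paper. The paper's proof is shorter and leans on machinery already in place: since $W$ is an open neighbourhood of $e$, $\nu_K\vdash W$, so Lemma \ref{L: nu}(1) supplies a definable $X\sub W$ with a chart onto a ball; then \cite[Theorem 7.11(a)]{HaHaPeGps} is quoted to say that $\nu_K$ (in a saturated extension) is already a differentiable group in this chart, $g\nu_K g^{-1}=\nu_K$ handles conjugation (via the same generic-differentiability-plus-homomorphism spreading you use), and compactness extracts the definable $Y$. You instead rebuild the differentiable structure directly at the definable level: Johnson's chart from \cite[Corollary 4.37]{JohnTopQp}, the $xy^{-1}z$ translation trick of \cite[Lemma 4.38]{HaHaPeGps} (exactly as in Lemma \ref{lemma analytic}, but in the differentiable category over $\CK$, using the first-order transfer of Scowcroft--van den Dries generic differentiability), and the homomorphism-spreading argument for $\tau_g$. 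This buys independence from the type-definable group $\nu_K$ and from the compactness step, at the cost of redoing work that \cite{HaHaPeGps} already packages; the underlying ideas (generic differentiability plus group translations) are the same. One small imprecision: after the chart change $\hat\varphi(x)=\varphi_0(xa_0)-b$, the new chart domain $\varphi_0^{-1}(V)a_0^{-1}$ need not lie in $W$, so your earlier arrangement $X_0\sub W$ does not by itself give $X\sub W$; you must choose the final ball $U$ inside $\hat\varphi\bigl(W\cap \varphi_0^{-1}(V)a_0^{-1}\bigr)$, which is open and contains $0$, so this is a one-line fix rather than a gap.
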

\begin{proof}
     By definition, $\nu_K\vdash W$, so by Lemma \ref{L: nu}(1), there exists some definable open subset $X\subseteq W$ and a definable injection $\varphi:  X\to K^n$, with $\varphi(X)$ an open ball and $n$ the $K$-rank of $G$.  
     
     Let $\widehat \CK\succ \CK$ be a $|\CK|^+$ saturated elementary extension. By \cite[Theorem 7.11(a)]{HaHaPeGps}, $\nu_K(\widehat \CK)$ is a (differentiable) Lie group with respect to the structure induced  by $\varphi$. Furthermore, $g\nu_K g^{-1}=\nu_K$ for any $g\in G(\CK)$ (\cite[Proposition 5.8(3)]{HaHaPeGps}). Since definable functions in $\CK$ are generically differentiable (\cite[Lemma 1.3]{Sc-vdDries}), and the conjugation map $\tau_g$, for $g\in G(\CK)$, is a group homomorphism, $\tau_g$ is a differentiable homomorphism of $\nu_K(\widehat \CK)$ (see also \cite[Proposition 4.19(2)]{HaHaPeVF} for a similar argument). We can now find the desired subset $Y\subseteq X$ by compactness. Replacing $Y$ with $Y\cap Y^{-1}$ we obtain our desired local group.
\end{proof}

The above lemma allows us to define the adjoint representation of definable groups: 
\begin{notation}\label{N:Ad}
Let  $(G,\cdot,^{-1},e)$ be a definable group of positive dimension,   $\mathcal{G}=(X,Y,\cdot,^{-1};\varphi)$  a local subgroup, as provided by Lemma \ref{L:existence of local group}. For $g\in G$, we let  $\ad(g):=D_e(\tau_g)\in M_n(K)$, the Jacobian matrix of the map $\tau_g:x\mapsto x^g$ (as a map from $Y$ to $X$) at $e$.
\end{notation}

By the chain rule, $\ad$ is a homomorphism of groups. 
Note that while the matrix $D_e(\tau_g)$ may depend on the choice of $\phi$ (up to conjugation), the definable group  $\ker(\ad)$ does not. From now on, we use $\ker(\ad)$ without  mention of the differentiable local group in the background. \\

The following is needed in the sequel.
\begin{remark}\label{R:U subset of kerAd}
If $U\sub G$  is an open neighbourhood of $e$ such that  $xy=yx$ for all $x,y\in U$ then, by Lemma \ref{L:existence of local group}, there exists a local differentiable abelian subgroup of $G$ (contained in $U$). As $\tau_g\rest U=\id$ for all $g\in U$, we get that $U\subseteq \ker(\ad)$. 
\end{remark}

We can now apply  Lemma \ref{L:existence of local group}.

\begin{proposition}\label{P:dim C_G(g)=dimG}
    Let $G$ be a definable group of positive dimension. If  $g\in \ker(\ad)$ then $\dim C_G(g)=\dim G$.
\end{proposition}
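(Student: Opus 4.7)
The plan is to apply the Lie-theoretic fixed-point lemma (Lemma \ref{L:bourbaki for local groups}) to the conjugation map $\tau_g$, restricted to a suitable local subgroup structure on $G$ near the identity.

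First, I would invoke Lemma \ref{L:existence of local group} to obtain a definable differentiable local subgroup $\mathcal{G}=(X,Y,\cdot,^{-1},e;\varphi)$ of $G$ with $\varphi(X)$ an open ball in $K^n$, where $n=\dim G$. By the ``moreover'' clause of that lemma, for any $g\in G$ the conjugation map $\tau_g\colon x\mapsto x^g$ induces a definable differentiable local group automorphism of $\mathcal{G}$ (after perhaps shrinking $Y$ relative to $g$, which is harmless for the fixed-point conclusion). By Notation \ref{N:Ad}, $D_e(\tau_g)=\ad(g)$, so the assumption $g\in\ker(\ad)$ is precisely $D_e(\tau_g)=\id$.

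Next, apply Lemma \ref{L:bourbaki for local groups} to $f=\tau_g\colon Y\to X$. The conclusion is that the set
\[
F:=\{y\in Y:\tau_g(y)=y\}
\]
is an open subset of $Y$ containing $e$. Unwinding the definition, $\tau_g(y)=y$ is equivalent to $y$ commuting with $g$, so $F=Y\cap C_G(g)$. Thus $C_G(g)$ contains an open (in the admissible topology $\tau_K$) neighbourhood of $e$ in $G$.

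Finally, since that neighbourhood is open in $G$ and $G$ has dimension $n$, by Lemma \ref{L: lasi=lsi=dim>0} (combined with the fact, used in its proof, that an admissible open set around $e$ is locally homeomorphic to an open subset of $K^n$) the neighbourhood has $\dim=n$. Because $C_G(g)\subseteq G$ contains this set, $\dim C_G(g)\ge n=\dim G$, while the reverse inequality is immediate from $C_G(g)\le G$. Hence $\dim C_G(g)=\dim G$. The only subtle step is verifying that $\tau_g$ may be viewed as a differentiable local group homomorphism on a neighbourhood of $e$ independent enough of $g$ to apply Lemma \ref{L:bourbaki for local groups}, but this is already encoded in the ``moreover'' clause of Lemma \ref{L:existence of local group}, so no further work is needed.
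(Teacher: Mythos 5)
Your proposal is correct and follows essentially the same route as the paper: invoke Lemma \ref{L:existence of local group} to get the differentiable local subgroup on which $\tau_g$ acts differentiably, apply Lemma \ref{L:bourbaki for local groups} to see that the fixed-point set of $\tau_g$ is open, and conclude $\dim C_G(g)\ge\dim G$ since that open set has full dimension. The paper's proof is a compressed version of exactly this argument, so no further comment is needed.
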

\begin{proof}
    Let $\mathcal{G}=(X,Y,\cdot,^{-1};\varphi)$ be the definable differentiable local group as provided by Lemma \ref{L:existence of local group}. If $g\in \ker(\ad)$ then by Lemma \ref{L:bourbaki for local groups}, we get that  $W:=\{x\in Y: x^g=x\}$ is open in $Y$. Since $\dim (Y)$ is  the $K$-rank of $G$ (Lemma \ref{L: lasi=lsi=dim>0}), we get that
    \[
    \dim (G)=\dim(Y)=\dim (W)\leq \dim C_G(g)\leq \dim(G)
    ,\] as required.
\end{proof}

Recall that a definable group $G$ is \emph{$K$-pure} if $G$ is locally  strongly internal to $K$ but not locally almost strongly internal to $\Gamma$ or to $K/\CO$. The following is based on an analogous result of Gismatullin, Halupczok and Macpherson: 

\begin{corollary}\label{C:dugaldetal}
    Let $G$ be a definable group, locally strongly internal to $K$ and let $g\in G$. If $G$ is $K$-pure and $\dim(C_G(g))=\dim(G)$ then $[G:C_G(g)]<\infty$. In particular, $[G:C_G(g)]<\infty$ for every $g\in \ker(\ad)$. 
\end{corollary}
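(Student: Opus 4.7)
The plan is to reduce the corollary to the observation that in a $K$-pure set, every zero-dimensional definable subset is finite (Lemma \ref{l: pure 0dim}), applied to the conjugacy class $g^G$.

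First I would identify the conjugacy class $g^G \subseteq G$ with the coset space $G/C_G(g)$ via the definable map $x \mapsto x^{-1}gx$, whose fibres are cosets of $C_G(g)$. Additivity of $\dim^{eq}$ (as recalled in Section \ref{section 2.4}) then gives
\[
\dim(G) = \dim(g^G) + \dim(C_G(g)).
\]
Under the hypothesis $\dim(C_G(g)) = \dim(G)$, this forces $\dim(g^G) = 0$.

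Next, since $g^G$ is a definable subset of $G$ and $G$ is $K$-pure, Lemma \ref{l: pure 0dim} applies to $S = G$: every definable zero-dimensional subset of $G$ is finite. Hence $g^G$ is finite, which means $[G : C_G(g)] = |g^G| < \infty$.

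For the ``in particular'' clause, I would simply invoke Proposition \ref{P:dim C_G(g)=dimG}: any $g \in \ker(\ad)$ satisfies $\dim(C_G(g)) = \dim(G)$, so the hypothesis of the first assertion holds, and finiteness of the index follows. No step here looks delicate; the only thing to be careful about is that additivity of $\dim^{eq}$ applies to the quotient $G/C_G(g)$ (which sits in $\CK^{\eq}$), and that $K$-purity of $G$ is preserved when restricting to a definable subset, both of which are immediate from Section \ref{section 2.4}.
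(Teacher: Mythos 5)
Your argument is correct and is essentially the paper's own proof: the definable bijection $g^G \leftrightarrow G/C_G(g)$, additivity of $\dim^{eq}$ to get $\dim(g^G)=0$, Lemma \ref{l: pure 0dim} via $K$-purity to conclude $g^G$ is finite, and Proposition \ref{P:dim C_G(g)=dimG} for the ``in particular'' clause. Nothing further is needed.
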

\begin{proof}
    The conjugacy class is in definable bijection with the imaginary sort $G/C_G(g)$. By additivity of dimension we get that $\dim(g^G)=\dim(G)-\dim(C_G(g))$. If $\dim(C_G(g))=\dim(G)$ then  $\dim(g^G)=0$.
        By Lemma \ref{l: pure 0dim}, $g^G$ is finite hence $[G:C_G(g)]$ is finite. \end{proof}

Before the next corollary we note the following application of Baldwin-Saxl.

\begin{fact}\label{Baldwin Saxl} 
Let $G$ be a definable group in $\CK$. Assume that $X\sub G$ is a definable set and for every $a\in X$, $C_G(a)$ has finite index in $G$. Then there exists a definable normal subgroup $G_1\trianglelefteq G$ of finite index with $G_1\leq C_G(X)$.
\end{fact}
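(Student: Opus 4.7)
The plan is to apply the Baldwin--Saxl lemma to the uniformly definable family of centralizers $\{C_G(a) : a \in X\}$. Since $\CK$ is $p$-adically closed, $\CK^{eq}$ is NIP (dp-minimal, even), so $G$ is an NIP group. Baldwin--Saxl for NIP groups then provides an integer $n$ such that any finite intersection from this uniformly definable family of subgroups reduces to an intersection of at most $n$ of them. In particular, $C_G(X) = \bigcap_{a \in X} C_G(a)$ equals $\bigcap_{i=1}^{n} C_G(a_i)$ for some $a_1,\dots,a_n \in X$, and hence is definable.

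Next, I would use the assumption that each $C_G(a_i)$ has finite index in $G$ to conclude that $C_G(X)$, being a finite intersection of finite-index subgroups, has finite index in $G$. To obtain normality, take the normal core
\[
G_1 := \bigcap_{g \in G} C_G(X)^g.
\]
Since $C_G(X)$ has finite index in $G$, it has only finitely many conjugates, so $G_1$ is actually a finite intersection of definable subgroups, hence definable; it is normal by construction, has finite index in $G$ (as a finite intersection of finite-index subgroups), and is contained in $C_G(X)$.

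The only non-routine ingredient is the NIP form of Baldwin--Saxl for the uniformly definable family $\{C_G(a):a\in X\}$, which is a standard consequence of NIP; beyond that, the argument is formal. No serious obstacle is expected.
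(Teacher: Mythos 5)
Your overall strategy is the same as the paper's: apply the NIP Baldwin--Saxl lemma to the uniformly definable family $\{C_G(a):a\in X\}$ and then pass to the normal core of $C_G(X)$. The normal-core step is fine (and note that $C_G(X)$ is definable outright, being the centraliser of a definable set, so definability was never the real issue). The gap is in the sentence ``In particular, $C_G(X)=\bigcap_{a\in X}C_G(a)$ equals $\bigcap_{i=1}^{n}C_G(a_i)$'': the form of Baldwin--Saxl you quote (every \emph{finite} intersection is a sub-intersection of at most $n$ of the groups) does not by itself imply that the intersection of the whole infinite family is a finite sub-intersection. A counterexample inside this very structure: take $G=(K,+)$ and the uniformly definable family $\{a\CO:a\in K^{\times}\}$; any finite intersection equals a single member (so Baldwin--Saxl holds with $n=1$), yet the total intersection is $\{0\}$, which is not a finite sub-intersection. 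So the finite-index hypothesis must enter already at this step, not only afterwards when you compute the index.

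The repair, which is what the paper's one-line argument is implicitly doing: since $\CK$ is saturated and $X$ is definable, compactness gives a uniform bound $d$ on $[G:C_G(a)]$ for $a\in X$ (otherwise some $a\in X$ would have an infinite conjugacy class). Combined with Baldwin--Saxl, every finite intersection of the $C_G(a)$'s is an intersection of at most $n$ of them and hence has index at most $d^{n}$. Choosing a finite intersection $H=C_G(a_1)\cap\dots\cap C_G(a_m)$ of maximal index among finite intersections, one gets $H\leq C_G(a)$ for every $a\in X$ (else $H\cap C_G(a)$ would be a finite intersection of strictly larger index), so $H=C_G(X)$ and $[G:C_G(X)]<\infty$. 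From there your normal core argument goes through verbatim and agrees with the paper's.
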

\begin{proof}
Since $\CK$ is an NIP structure, we may apply Baldwin-Saxl (e.g., \cite[Lemma 1.3]{PoiGroups}) to conclude that there is a a finite bound on the index of finite intersections of subgroups of the form $C_G(a)$, $a\in X$.  Consequently, $C_G(X)=\bigcap \{C_G(a): a\in X\}=C_G(a_1)\cap \dots \cap C_G(a_m)$, for some $a_1,\dots, a_m\in X$. In particular, $C_G(X)$ has finite index in $G$. By general group theory,  the intersection of all the conjugates of $C_G(X)$, call it $G_1$, is a normal subgroup of $G$  of finite index. 
\end{proof} 
As a  corollary, we get: 

\begin{corollary}\label{C: ZG=ker}
    Let $G$ be a definable $K$-pure group. Then $C_G(\ker(\ad))$ has finite index in $G$. Moreover, there exists a finite index normal (open) subgroup $G_1\trianglelefteq G$ such that 
    $\ker(\mathrm{Ad}\restriction G_1)\leq Z(G_1)$.
\end{corollary}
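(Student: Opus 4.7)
The plan is to assemble Corollary \ref{C:dugaldetal} with the Baldwin--Saxl argument of Fact \ref{Baldwin Saxl} applied to $X := \ker(\ad)$. First I would note that since $G$ is $K$-pure, Lemma \ref{L: lasi=lsi=dim>0} gives $\dim(G)>0$, so the adjoint map $\ad$ of Notation \ref{N:Ad} is defined on $G$, and $\ker(\ad)\le G$ is a definable subgroup (it is the preimage under a definable map $G\to M_n(K)$ of the identity matrix). By Corollary \ref{C:dugaldetal}, for every $g\in\ker(\ad)$ the centraliser $C_G(g)$ has finite index in $G$.

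Now apply Fact \ref{Baldwin Saxl} with the definable set $X=\ker(\ad)$: NIP of the ambient theory, together with the hypothesis just verified, produces a definable normal subgroup $G_1\trianglelefteq G$ of finite index with $G_1\le C_G(\ker(\ad))$. In particular $C_G(\ker(\ad))\supseteq G_1$ has finite index in $G$, which is the first claim. For the ``moreover'' clause, let $a\in \ker(\mathrm{Ad}\restriction G_1)=\ker(\ad)\cap G_1$. By the defining property of $G_1$, every $b\in G_1$ commutes with $a$, so $a\in Z(G_1)$; this yields $\ker(\mathrm{Ad}\restriction G_1)\le Z(G_1)$ as required.

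It remains to explain the parenthetical ``open''. Since $[G:G_1]<\infty$ and each coset is in definable bijection with $G_1$, we have $\dim(G_1)=\dim(G)$. In the admissible (equivalently, $\tau_K$) topology of Section \ref{ss: topologies}, a definable subset of $G$ of maximal dimension has non-empty interior (this is the same dimension-theoretic fact used implicitly throughout, via \cite[Corollary 4.37]{JohnTopQp} and generic continuity), and any subgroup of a topological group with non-empty interior is automatically open. Hence $G_1$ is open.

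I do not expect a serious obstacle: given that $\ker(\ad)$ is definable and that the centraliser condition from Corollary \ref{C:dugaldetal} holds pointwise on $\ker(\ad)$, the rest is a formal application of Baldwin--Saxl plus the observation that $G_1$ centralising $\ker(\ad)$ forces $\ker(\ad)\cap G_1\subseteq Z(G_1)$. The only point requiring a moment of care is checking that the hypothesis of Corollary \ref{C:dugaldetal} (namely $\dim(C_G(g))=\dim(G)$ for $g\in\ker(\ad)$) is met, but this is exactly Proposition \ref{P:dim C_G(g)=dimG}.
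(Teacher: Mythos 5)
Your proposal is correct and follows essentially the same route as the paper: Corollary \ref{C:dugaldetal} gives $[G:C_G(g)]<\infty$ for all $g\in\ker(\ad)$, Fact \ref{Baldwin Saxl} applied to $X=\ker(\ad)$ yields the finite-index normal $G_1\le C_G(\ker(\ad))$, and then $\ker(\ad)\cap G_1\le Z(G_1)$ and openness of $G_1$ follow as you say. The only cosmetic difference is that you spell out the dimension-theoretic justification that a finite-index definable subgroup is open, where the paper simply asserts it (elsewhere citing Johnson's result that finite-index subgroups are open in the admissible topology).
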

\begin{proof} 
By Corollary \ref{C:dugaldetal}, for every $g\in \ker(\ad)$, $C_G(g)$ has finite index in $G$. By Fact \ref{Baldwin Saxl}, there exists a normal definable subgroup $G_1\trianglelefteq G$ such that  $G_1\le C_G(\ker(\ad))$ so $C_G(\ker(\ad))$, too, has finite index. It follows, also, that $\ker(\ad\restriction G_1)=\ker(\ad)\cap G_1\leq Z(G_1)$. Note that Since $G_1$ has finite index in $G$ it is open.
\end{proof}

\begin{remark}
In the notation of Corollary \ref{C: ZG=ker}, note that if $G_1\leq G$ is an open subgroup, then $\ad\restriction G_1=\ad_{G_1}$.
\end{remark}

The above result implies that for $K$-pure groups, the kernel of $\ad$ has an open normal abelian subgroup of finite index. This is true in particular for  $p$-adic Lie groups definable in the $p$-adic field. Recently,  \cite{helge-example},  Gl\"ockner constructed an example of a $1$-dimensional $p$-adic Lie group $G$ for which this fails. In fact, in his example $\ker (\ad)=G$, but $G$ contains no open normal abelian subgroup.

\section{The Main results}
Recall that a group is \emph{definably simple} if it has no definable normal subgroups, and  \emph{definably semisimple} if it has no  definable infinite normal abelian subgroups. 
The main goal of the present section is to show that definably semisimple groups are, after quotienting by a finite  normal subgroup,  definably isomorphic to $K$-linear groups. 
As a corollary, we show that the notion of a definably semisimple group is elementary, despite the fact that $K^{eq}$ does not eliminate the quantifier $\exists^\infty$. I.e., if $\CK_0\prec \CK$ and $G$  is a $K_0$-definable group, such that $G$ is     definably semisimple in $K_0$ then it remains so in $K$.

As before, $\CK=\CK^{eq}$ is a sufficiently saturated $p$-adically closed field. Throughout the previous section we were working under the assumption that our definable group $G$ is a $D$-group (for some distinguished sort $D$).  As shown in \cite{HaHaPeGps}, this need not be the case. The best we can obtain, in general, is that if $G$ is locally \emph{almost} strongly internal to $D$  then there is a finite normal subgroup $H$ such that $G/H$ is a $D$-group (so in particular, locally strongly internal to $D$), \cite[Proposition 4.35]{HaHaPeGps}. Our first order of business is to verify that taking such quotients is, essentially,  harmless. 

\subsection{Some group theoretic facts}

We need a couple of group theoretic observations on definable groups in our setting. We note for future reference that in both Lemma \ref{L:groups 1} and Corollary \ref{C: semisimple in quotients} below saturation of $\CK$ is not used: 
\begin{lemma}\label{L:groups 1}
Let $N$ be a definable group in $\CK$ and $H\trianglelefteq N$ a definable normal subgroup, such that $N/H$ is abelian. 
For $k\in \mathbb N$, let $N^k=\{g^k:g\in N\}$. Then: 
\begin{enumerate}
    \item For every $k\in \mathbb N$, $N^k H$ is a normal subgroup of $N$ and $N/N^kH$ is finite.
    
    \item If $H$ is finite and central,  and $k=|H|$ then $N^k\sub Z(N)$ and $Z(N)$ has finite index in $N$.
    
\end{enumerate}
\end{lemma}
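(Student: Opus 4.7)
The proof has two parts; I would handle them separately.

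\medskip

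\noindent\textbf{Part (1).} Since $N/H$ is abelian, the set of $k$-th powers $(N/H)^k$ is a subgroup of $N/H$, and its preimage in $N$ under the quotient map is precisely $N^k H$. Hence $N^k H$ is a subgroup of $N$ containing $H$, and it is normal since every subgroup of the abelian group $N/H$ is normal (so its preimage is normal in $N$). The remaining content is the finiteness of
\[
N/N^k H \;\cong\; (N/H)/(N/H)^k,
\]
which is a definable abelian group of exponent dividing $k$ in $\CK^{\eq}$. To establish this, I would argue that any infinite definable abelian group in $\CK^{\eq}$ of bounded exponent leads to a contradiction, by invoking \cite[Lemma 7.10]{HaHaPeGps} to reduce to local almost strong internality to one of the distinguished sorts $K$, $\Gamma$, $K/\CO$, and in each case using the concrete structure: $(K,+)$ is torsion-free (so the $K$-case is excluded), $\Gamma$-internality is controlled by $\Zz_{Pres}$ where bounded-exponent subgroups of a finite-index subgroup of $\Zz^n$ are finite (cf.\ Fact \ref{F: full dp-rank, presburger} and Lemma \ref{L: family of functions, presburger}), and the $k$-torsion of $K/\CO$ is finite by Fact \ref{F: torsion}(3). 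This combined with Lemma \ref{L: nu} shows that the infinitesimal subgroup $\nu_D$ cannot live inside a bounded-exponent group.

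\medskip

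\noindent\textbf{Part (2).} With $H \leq Z(N)$ finite and $k = |H|$, I would set up the standard commutator calculus for central extensions. Since $N/H$ is abelian, $[N, N] \subseteq H \subseteq Z(N)$. For fixed $n \in N$, a direct computation gives
\[
[g_1 g_2, n] \;=\; g_1\, [g_2, n]\, g_1^{-1} \cdot [g_1, n] \;=\; [g_1, n] \cdot [g_2, n],
\]
the second equality using centrality of $[g_2, n] \in H$. Hence $g \mapsto [g, n]$ is a group homomorphism $N \to H$, and symmetrically so is $n \mapsto [g, n]$. Since $H$ has exponent $k$,
\[
[g^k, n] \;=\; [g, n]^k \;=\; 1
\]
for all $g, n \in N$; that is, $g^k \in Z(N)$. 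This gives $N^k \subseteq Z(N)$, and combined with $H \subseteq Z(N)$ we conclude $N^k H \subseteq Z(N)$. Part (1) then forces $Z(N)$ to have finite index in $N$.

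\medskip

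The main obstacle is the finiteness claim in Part (1): the group-theoretic manipulations (subgroup, normality, identification of quotients) are routine and the commutator calculus of Part (2) is standard, but obtaining finiteness of a bounded-exponent definable abelian group in $\CK^{\eq}$ genuinely uses the $p$-adic structure, via the dichotomy on distinguished sorts, rather than any purely algebraic argument.
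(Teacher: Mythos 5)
Your subgroup/normality argument in (1) and your commutator calculus in (2) are correct and essentially the paper's own: the paper verifies $a^kb^k=(ab)^kh_0$ by a direct computation rather than passing to the quotient $N/H$, and in (2) computes $g^{-1}x^kg=(xh)^k=x^kh^k=x^k$, which is your homomorphism argument in compressed form. The real divergence is the finiteness of $N/N^kH$. The paper does not reprove anything at this point: it notes that $N/N^kH$ is a definable group of exponent at most $k$ and quotes \cite[Theorem 7.11]{HaHaPeGps}, which states that every infinite definable group in $\CK$ has unbounded exponent.

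Your attempted re-derivation of that fact has a genuine gap in the $K$-case. Local (almost) strong internality to $K$ only provides a definable finite-to-one map from an infinite definable subset into some $K^n$; it carries no compatibility whatsoever between the group operation and field addition, so torsion-freeness of $(K,+)$ excludes nothing (think of a definable group whose underlying set sits in $K^n$ but whose multiplication is unrelated to $+$). Note also that Lemma \ref{L: nu}(1), unlike clauses (2) and (3), gives no homomorphism-like property of $f$ for $D=K$, so the assertion that ``$\nu_K$ cannot live inside a bounded-exponent group'' does not follow from it. Closing this case requires the Lie-theoretic content of \cite[Theorem 7.11(1)]{HaHaPeGps} (the differentiable group structure on $\nu_K$) together with the fact that a positive-dimensional $p$-adic Lie or local group has elements of arbitrarily large order, transferred from $\mathbb{F}$ by a first-order-in-families argument as in Section \ref{Ss: K-gps} --- or one simply cites the unbounded-exponent statement of \cite[Theorem 7.11]{HaHaPeGps}, as the paper does. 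Your $\Gamma$ and $K/\CO$ cases, by contrast, can be made to work along the lines you indicate, using the additivity in Lemma \ref{L: nu}(2),(3) and Fact \ref{F: torsion}.
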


\begin{proof} (1) Since $N/H$ is abelian, for every $a,b\in N$, $ab=bah$ for some $h\in H$. Because $H$ is normal, for all $g\in G$ and $h\in H$ there is $h'\in H$ such that $hg=gh'$. It follows that $a^2b^2=(ab)^2h_1$, for $h_1\in H$, and by induction, $a^kb^k=(ab)^k h_0$, for some $h_0\in H$. Thus $N^kH$ is a subgroup, clearly normal in $N$.

The order of every  $g\in N/N^kH$ is at most $k$, thus $N/N^kH$ has bounded exponent.
The group $N/N^kH$ is clearly also definable in $\CK$, and in  \cite[Theorem 7.11]{HaHaPeGps} we showed that every infinite such group must have unbounded exponent.
Thus, $N/N^kH$ must be finite.

(2) Assume now that $k=|H|$ and $H$ is central. Since $G/H$ is abelian,  For every $g, x\in N$ we have $g^{-1}xg=x h$ for some $h\in H$, and hence, since $H$ is central,  $g^{-1}x^k g=(xh)^k=x^kh^k=x^k$. Thus $N^k\sub Z(N)$. It follows that $N^kH\sub Z(N)$, so by (1), $Z(N)$ has finite index in $N$.
\end{proof}

The proof of the next corollary is simpler when $H$ is central, but for our needs we have to avoid this assumption.

\begin{corollary}\label{C: semisimple in quotients}
 Let $G$ be a definable group in $\CK$ and $H$ a finite normal subgroup of $G$.
 If $G/H$ contains a definable normal abelian subgroup of dp-rank $k$ then  so does  $G$. In particular, if $G$ is definably semisimple, then so is $G/H$.
\end{corollary}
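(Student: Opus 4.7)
The plan is to pull back a witnessing abelian subgroup from $G/H$ to $G$ and extract from it a definable normal abelian subgroup of $G$ of the same dp-rank, using Lemma \ref{L:groups 1}(2). Let $\pi \colon G \to G/H$ be the canonical projection and let $\bar N \trianglelefteq G/H$ be a definable normal abelian subgroup of dp-rank $k$. Set $N := \pi^{-1}(\bar N)$. Because $\pi\rest N$ is finite-to-one, $N$ is a definable normal subgroup of $G$ with $\dpr(N)=k$, and $N/H = \bar N$ is abelian.

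The main obstacle is that $H$, although finite and normal in $N$, need not be central in $N$ (this is exactly the point flagged in the statement: the proof is ``simpler when $H$ is central''), which prevents an immediate application of Lemma \ref{L:groups 1}(2). To circumvent this, I would use that $\aut(H)$ is finite, so the conjugation homomorphism $N \to \aut(H)$ has a definable kernel $N_0 := C_N(H)$ of finite index in $N$. Moreover $N_0$ is normal in $G$: for any $g \in G$ and $x \in N_0$, normality of $H$ in $G$ gives $g^{-1}hg \in H$ for every $h \in H$, hence $x$ centralises $g^{-1}hg$, so $gxg^{-1}$ centralises $h$.

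Now $H_0 := N_0 \cap H$ is finite and central in $N_0$, and the canonical map $N_0/H_0 \hookrightarrow N/H$ shows that $N_0/H_0$ is abelian. Applying Lemma \ref{L:groups 1}(2) to the pair $(N_0,H_0)$ with $k_0 := |H_0|$ yields that $Z(N_0)$ has finite index in $N_0$. Since $Z(N_0)$ is characteristic in $N_0$ and $N_0 \trianglelefteq G$, the subgroup $Z(N_0)$ is a definable normal abelian subgroup of $G$, and
\[
\dpr(Z(N_0)) \;=\; \dpr(N_0) \;=\; \dpr(N) \;=\; k,
\]
as required.

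For the ``In particular'' clause, suppose $G$ is definably semisimple and, for contradiction, $G/H$ carries a definable infinite normal abelian subgroup. Infinite definable sets in an NIP theory have positive dp-rank, so the first part produces a definable normal abelian subgroup of $G$ of dp-rank at least one, hence infinite, contradicting the definable semisimplicity of $G$.
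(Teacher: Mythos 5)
Your proof is correct and follows essentially the same route as the paper: reduce to the case where the finite kernel is central by passing to a finite-index subgroup centralizing $H$, apply Lemma \ref{L:groups 1}(2), and take its centre, which is normal in $G$ and of full dp-rank. The only difference is that the paper produces the centralizing subgroup via Baldwin--Saxl (Fact \ref{Baldwin Saxl}) applied to $G$, whereas you observe directly that $C_N(H)$ has finite index in $N$ because $\aut(H)$ is finite and check its normality in $G$ by hand --- a slightly more elementary justification of the same step.
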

\begin{proof} 
 By Lemma \ref{Baldwin Saxl}, there exists a definable $G_1\trianglelefteq G$ of finite index such that $G_1\sub C_G(H)$. In particular, $G_1\cap H$ is central in $G_1$.

 Assume that $G/H$ has an infinite definable abelian normal subgroup of the form $N/H$ for $N\trianglelefteq G$. Let  $N_1:=N\cap G_1$, an infinite normal subgroup of $G$ of finite index in $N$ and $H_1:=H\cap N_1$, a central subgroup of $N_1$. The quotient $N_1/H_1$ is isomorphic to $N_1H/H\sub N/H$ so is  abelian. 

By Lemma \ref{L:groups 1} (2), $Z(N_1)$ has finite index in $N_1$ and therefore $\dpr(Z(N_1))=\dpr(N_1)=\dpr(N)=\dpr(N/H)$. Because $N_1$ is normal in $G$ so is $Z(N_1)$. Hence, $Z(N_1)$ is a definable  normal abelian subgroup of $G$ of the same rank as $N_1/H$.
\end{proof}

\subsection{The main results}
In this section we prove the main results of the paper: those claimed in the introduction, as well as some corollaries. Recall from \cite[\S 9.3]{JohnTopQp} that a definable group $G$ is \emph{locally abelian} if there exists $W\ni e$, a neighbourhood of $e$ in $G$ (in the admissible topology) such that $xy=yx$ for all $x,y\in W$. The following is immediate:
\begin{lemma} 
A definable group $G$ in $\CK$ of positive dimension is locally abelian if and only if $\nu_K$ is abelian (meaning that $\nu_K(\CK')$ is an abelian group in every $|K|^+$-saturated $\CK'\succ \CK$).
\end{lemma}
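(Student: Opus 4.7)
The equivalence rests on the description of $\nu_K$ as (the filter generated by) the family of $K$-definable open neighborhoods of $e$ in the admissible topology on $G$, recorded in Lemma \ref{L: nu}(1) together with Lemma \ref{L:topologies coincides} and the paragraph immediately following it. Granted this characterization, each direction follows by a one-step compactness argument.

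For the forward implication, I would assume that $G$ admits an open neighborhood $W\ni e$ in the admissible topology on which multiplication commutes. Since the admissible topology has a uniformly $K$-definable basis of open neighborhoods at $e$, the set $W$ must contain some basic open neighborhood $U\ni e$, and such a $U$ lies in the generating family of $\nu_K$. Hence in any $|K|^+$-saturated $\CK'\succ\CK$ we have $\nu_K(\CK')\subseteq U(\CK')\subseteq W(\CK')$. The first-order sentence ``$\forall x,y\in W\,(xy=yx)$'' holds in $\CK$ and therefore transfers to $\CK'$, so $\nu_K(\CK')$ is abelian.

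For the converse, suppose $\nu_K(\CK')$ is abelian in every $|K|^+$-saturated $\CK'\succ \CK$. Then the partial type $\nu_K(x)\cup\nu_K(y)\cup\{xy\neq yx\}$ over $K$ is inconsistent, and compactness produces two formulas $\phi_1(x),\phi_2(y)$ from the generating family of $\nu_K$ with $\phi_1(x)\wedge\phi_2(y)\to xy=yx$. Setting $W:=\phi_1(\CK)\cap\phi_2(\CK)$ gives a $K$-definable open neighborhood of $e$ on which multiplication commutes, witnessing local abelianness.

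The only mildly delicate point is the use, in the forward direction, of the fact that every (possibly non-$K$-parameter) open neighborhood of $e$ contains a $K$-definable basic one. This is, however, immediate from the admissible topology being defined via a uniformly $K$-definable basis at $e$: an open set is by definition a union of basis elements, and the basis element around $e$ contained in $W$ is $K$-definable by construction. Everything else is purely formal.
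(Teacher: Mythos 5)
Your argument is correct and is precisely the ``immediate'' argument the paper intends: the paper gives no proof for this lemma, and your route---the identification of $\nu_K$ with the partial type of all definable admissible neighbourhoods of $e$ (Lemma \ref{L:topologies coincides} and the remark following it), plus transfer in one direction and compactness applied to $\nu_K(x)\cup\nu_K(y)\cup\{xy\neq yx\}$ in the other---is exactly the expected one. One cosmetic point: in the forward direction the sentence you transfer should be $\forall x,y\in U\,(xy=yx)$ for the definable basic neighbourhood $U\subseteq W$, since $W$ itself need not be definable (so $W(\CK')$ is not meaningful); this is the very point you already address, so the fix is immediate.
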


The next theorem gives conditions under which a definable,  infinite, abelian normal subgroup exists in $G$. 

\begin{theorem}\label{T:johnson question}
    Let $G$ be an infinite group definable over a  $p$-adically closed field $\CK_0\prec \CK$. 
    \begin{enumerate}
        \item If $G$ is $K$-pure and locally abelian then there exists a definable abelian subgroup  $G_1\trianglelefteq G$ of finite index, defined over $\CK_0$. In particular, $G_1$ is open. 
        
        \item If $G$ is not $K$-pure  then there exists a $\CK_0$-definable infinite normal abelian subgroup $N\trianglelefteq G$.
        More precisely, if $G$ is locally almost strongly internal to $\Gamma$ or to $K/\CO$ then $\dpr(N)$ is greater or equal to  the almost $\Gamma$-rank of $G$, or the almost $K/\CO$-rank of $G$, respectively.

    \end{enumerate}
    
    In any case, if $G$ is locally abelian then it contains a definable infinite normal abelian subgroup.

\end{theorem}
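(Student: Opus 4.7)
The plan is to split the proof along the stated dichotomy, with the final ``in any case'' clause following immediately.

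\textbf{Part (1).} Assume $G$ is $K$-pure and locally abelian. Local abelianness provides an open neighborhood $W\ni e$ of commuting elements, and by Remark \ref{R:U subset of kerAd}, $W\subseteq \ker(\ad)$. I would then invoke Corollary \ref{C: ZG=ker} to obtain a $\CK_0$-definable finite-index normal open subgroup $G_1\trianglelefteq G$ with $\ker(\ad\restriction G_1)\leq Z(G_1)$. Consequently $W\cap G_1\subseteq \ker(\ad\restriction G_1)\subseteq Z(G_1)$, so $Z(G_1)$ contains an open neighborhood of $e$ in $G_1$ and is itself open in $G_1$. For every $g\in G_1$ the centralizer $C_{G_1}(g)\supseteq Z(G_1)$ is then open, whence $\dim C_{G_1}(g)=\dim G_1=\dim G$; since $G_1$ is $K$-pure as a definable subset of the $K$-pure $G$ (Lemma \ref{l: pure 0dim}), Corollary \ref{C:dugaldetal} yields $[G_1:C_{G_1}(g)]<\infty$ for every $g\in G_1$. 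Applying Fact \ref{Baldwin Saxl} with $X=G_1$ inside $G_1$ produces a definable finite-index normal $H\trianglelefteq G_1$ with $H\leq C_{G_1}(G_1)=Z(G_1)$, so $H$ is abelian. Passing to the $G$-core of $H$ (a finite intersection of $G$-conjugates, each contained in $Z(G_1)$ and hence abelian) yields the desired $\CK_0$-definable abelian normal subgroup of $G$ of finite index.

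\textbf{Part (2).} If $G$ is not $K$-pure, then by \cite[Lemma 7.10]{HaHaPeGps} it is locally almost strongly internal to some $D\in\{\Gamma,K/\CO\}$, and by \cite[Proposition 4.35]{HaHaPeGps} there is a $\CK_0$-definable finite normal $H\trianglelefteq G$ such that $G/H$ is a $D$-group. I would then apply Proposition \ref{P:not semisimple Gamma} (if $D=\Gamma$) or Proposition \ref{P:not semisimple K/O} (if $D=K/\CO$) to $G/H$ to obtain a $\CK_0$-definable normal abelian $\bar N\trianglelefteq G/H$ whose dp-rank is at least the $D$-rank of $G/H$; since $H$ is finite, this equals the almost $D$-rank of $G$. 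Corollary \ref{C: semisimple in quotients} now lifts $\bar N$ to a $\CK_0$-definable normal abelian $N\trianglelefteq G$ of the same dp-rank, yielding the claimed lower bound.

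\textbf{Final clause.} If $G$ is locally abelian and $K$-pure, part (1) gives an abelian normal subgroup of finite index, which is infinite since $G$ is; otherwise part (2) directly produces a normal abelian subgroup of positive dp-rank, hence infinite.

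The main technical point that will require care is the bookkeeping of parameters: one must verify that each subgroup produced (the Baldwin-Saxl witnesses, the center $Z(G_1)$, the core of $H$, and the lift of $\bar N$ through the finite quotient $G/H$) is indeed definable over $\CK_0$ rather than over auxiliary parameters -- routine, but easy to mishandle.
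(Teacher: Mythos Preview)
Your proposal is correct and follows essentially the same strategy as the paper. A few comparative remarks:

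For Part~(1), the paper bypasses Corollary~\ref{C: ZG=ker} and works directly: from the open abelian $U\subseteq\ker(\ad)$ it applies Corollary~\ref{C:dugaldetal} and Fact~\ref{Baldwin Saxl} once to obtain a finite-index normal $H_0\leq C_G(U)$, observes that then $U\subseteq C_G(h)$ for every $h\in H_0$ (so $\dim C_G(h)=\dim G$), and applies the same pair of results again to get $C_G(H_0)$ of finite index; then $G_1=H_0\cap C_G(H_0)$ is abelian. Your route through Corollary~\ref{C: ZG=ker} amounts to the same double application, just with the first round packaged; note in fact that once you have $Z(G_1)$ open and of finite index you are already done, since $Z(G_1)$ is characteristic in the normal $G_1$ and hence normal in $G$ --- the passage to the $G$-core of $H$ is unnecessary.

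On the parameter bookkeeping you flag: the paper does \emph{not} track parameters through the Baldwin--Saxl witnesses. Instead, having produced in $\CK$ an abelian normal subgroup of some finite index $k$, it observes that the existence of such a subgroup is a first-order property of the parameters defining $G$, and hence a witness exists already over $\CK_0$. This is cleaner than trying to ensure each intermediate object is $\CK_0$-definable, and you should adopt it.

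Part~(2) matches the paper's argument.
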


\begin{proof}
(1) 
     Assume first that $G$ is locally abelian and $K$-pure. The assumption implies that $\dim(G)>0$, hence by Lemma \ref{L: lasi=lsi=dim>0}, $G$ is locally strongly internal to $K$. 
     Since $G$ is locally abelian, by Remark \ref{R:U subset of kerAd} we can find a definable open subset $U\subseteq \ker(\ad)$, this gives $\dim(\ker(\ad))=\dim(G)$.   
     
     The proof that $G$ is abelian-by-finite is an  adaptation of \cite[Proposition 2.3]{PiYao}. By Corollary \ref{C:dugaldetal} $[G:C_G(a)]<\infty$ for all $a\in U$. Since $U$ is definable, by compactness and saturation, there is a uniform bound  on $[G:C_G(a)]$. By Fact \ref{Baldwin Saxl}, there is a definable normal subgroup of finite index $H_0\trianglelefteq G$ such that $H_0\leq C_G(U)$. 
     
     For every $h\in H_0$, $U\sub C_G(h)$ hence $\dim C_G(h)=\dim G$,
     By  Corollary \ref{C:dugaldetal} and $K$-purity, we have $[G:C_G(h)]<\infty$ for every $h\in H_0$. Thus, applying Fact \ref{Baldwin Saxl} again, we see that  $C_G(H_0)$ has finite index in $G$, so in particular,  $G_1=C_G(H_0)\cap H_0$ has finite index in $G$ and is commutative. It follows that $G_1$ is open by \cite[Proposition 5.18]{JohnTopQp}. The fact that $G_1$ is a definable, open, normal abelian, subgroup of index $k$ (some $k\in \Nn)$, is first order, so we can find such $G_1$ defined over  $\CK_0$. 

    (2) Assume now that $G$ is not $K$-pure. 
    By \cite[Lemma 7.10]{HaHaPeGps}, $G$ is locally almost strongly internal to $D=\Gamma$ or $D=K/\CO$. By  \cite[Proposition 4.35]{HaHaPeGps} there exists $H\trianglelefteq G$ a finite normal subgroup such that  $G/H$ is locally strongly internal to $D$ and a $D$-group. Moreover, the $D$-rank of $G/H$ is the almost $D$-rank of $G$. By \cite[Proposition 4.35(2)]{HaHaPeGps} $H$ is invariant under all the automorphisms of $\CK$ over $\CK_0$. Because $H$ is definable, this means that $H$ is $\CK_0$-definable.

    Assume  that $D=\Gamma$. By Proposition  \ref{P:not semisimple Gamma}, we have $\nu_{\Gamma}(G/H)\vdash Z(G/H)$. In particular, $G/H$ contains a normal abelian subgroup whose dp-rank is at least the $\Gamma$-rank of $G/H$ (equivalently, the almost $\Gamma$-rank of $G$). By Corollary \ref{C: semisimple in quotients}, $G$ contains a definable normal abelian subgroup of the same dp-rank.
    
    Finally, assume that $G$ is locally almost strongly internal to $K/\CO$, so $G/H$ is locally strongly internal to $K/\CO$ and its $K/\CO$-rank equals the almost  $K/\CO$-rank of $G$. By Proposition \ref{P:not semisimple K/O}, as $G$ and $H$ are both $\CK_0$-definable, there exists a $\CK_0$-definable infinite normal abelian subgroup of  $G/H$ whose dp-rank is at least the almost $\Gamma$-rank of $G/H$
    By Corollary \ref{C: semisimple in quotients}, $G$ contains a definable normal abelian group of the same rank.
\end{proof}

The following example shows that the assumption of $K$-purity is needed in Theorem \ref{T:johnson question}, in order to find in a definable locally abelian group, a definable open normal abelian subgroup:

\begin{example}\label{E:counter}
Let $\CO^\times$ denote the multiplicative group of $\CO$. Consider the semi-direct product $G=\CO^\times \ltimes K/\CO$, where $(a,b+\CO)\cdot (c,d+\CO)=(ac, b+ad+\CO)$. Then $\dim(G)=1$ and $\dpr(G)=2$. It is locally abelian, as witnessed by $\CO^\times\times \{0\}$. We claim that $G$ has no definable open normal abelian subgroup. Assume, towards a contradiction, that  $H$ is such, in particular by \cite[Theorem 1.4(1)]{JohnTopQp} $\dim(H)=\dim(G)$ so $\pi_1(H)$, the projection on the first coordinate, must be infinite.

Let $(t,0)\in H$ for $t\neq 1$. Since the conjugation of $(t,0)$ by $(a,b+\CO)$ is $(t,b-bt+\CO)$, by letting $b$ vary we conclude that $\pi_2(H)$, the projection on the second coordinate, is equal to $K/\CO$. Thus, $H=U\ltimes K/\CO$ for some infinite definable subgroup $U$ of $\CO^\times$. Every element of $\CO^{\times}$ acts non-trivially on $K/\CO$, thus $U\ltimes K/\CO$ is not abelian unless $U=\{1\}$,  proving that $H$ as required does not exist. 

On the other hand, note that $\{1\}\times K/\CO$ is an infinite definable normal abelian subgroup (that is not open).
\end{example}

Theorem \ref{T:johnson question} together with the above example answer a question of Johnson's \cite[\S 9.3]{JohnTopQp}. As a corollary of Theorem \ref{T:johnson question} we obtain Theorem \ref{T: intro1}: 

\begin{theorem}\label{T:main}
Let $\CK_0$ be a  $p$-adically closed field  and let $G$ be a definable,  definably semisimple group in $\CK_0$. Then there exist  $H_0\le G$ a finite normal subgroup, defined over the same parameters as $G$, such that  $G/H_0$
is definably isomorphic to a definable $K_0$-linear group. 
\end{theorem}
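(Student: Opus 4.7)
My plan is to take $H_0 := \ker(\ad)$, where $\ad$ is the adjoint representation from Notation \ref{N:Ad}, and realize $G/H_0$ inside $\gl_n(K)$ via the induced embedding. Passing to a sufficiently saturated $\CK \succ \CK_0$ with $G$ still $\CK_0$-definable, I first observe that $G$ must be $K$-pure and not locally abelian. If $G$ were not $K$-pure, Theorem \ref{T:johnson question}(2) would produce a $\CK_0$-definable infinite normal abelian subgroup of $G$; and if $G$ were $K$-pure and locally abelian, Theorem \ref{T:johnson question}(1) would produce a $\CK_0$-definable normal abelian subgroup of finite index (hence infinite, as $G$ is infinite). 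Both options contradict the definable semisimplicity of $G$ in $\CK_0$. So $\dim G > 0$ by Lemma \ref{L: lasi=lsi=dim>0}, and by elementarity a local chart around $e$ as in Lemma \ref{L:existence of local group} can be chosen over $\CK_0$; the adjoint representation $\ad\colon G \to \gl_n(K)$ is then $\CK_0$-definable.

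Set $H_0 := \ker(\ad) \trianglelefteq G$, a $\CK_0$-definable normal subgroup. To show $H_0$ is finite, Corollary \ref{C: ZG=ker} gives that $C_G(H_0)$ has finite index in $G$; let $G_1$ be its normal core in $G$, i.e.\ the intersection of its (finitely many) conjugates. Then $G_1$ is a $\CK_0$-definable normal subgroup of $G$ of finite index, contained in $C_G(H_0)$, whence every element of $G_1$ commutes with every element of $H_0$, and in particular $H_0 \cap G_1 \leq Z(G_1)$. Since $Z(G_1)$ is characteristic in $G_1$, it is a $\CK_0$-definable normal abelian subgroup of $G$, and so must be finite by the definable semisimplicity of $G$ in $\CK_0$. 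Consequently $H_0 \cap G_1$ is finite, and as $[H_0 : H_0 \cap G_1] \leq [G : G_1] < \infty$, $H_0$ itself is finite.

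Finally, $\ad$ induces a $\CK_0$-definable injective homomorphism $G/H_0 \hookrightarrow \gl_n(K)$, identifying $G/H_0$ with $\ad(G)$. By stable embeddedness of the valued field sort, $\ad(G) \leq \gl_n(K)$ is definable in $K$ with parameters from $\CK_0$, i.e.\ it is a $K_0$-linear group; hence $G/H_0$ is definably isomorphic to a definable $K_0$-linear group, as required.

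The main obstacle is ensuring the $\CK_0$-definability of the subgroup $G_1$, so that $Z(G_1)$ is $\CK_0$-definable and the hypothesis of definable semisimplicity in $\CK_0$ can be invoked to force $Z(G_1)$ to be finite. Taking $G_1$ to be the normal core of $C_G(H_0)$ handles this cleanly: $C_G(H_0)$ is manifestly $\CK_0$-definable (as $H_0$ is), and its finitely many conjugates intersect in a $\CK_0$-definable set.
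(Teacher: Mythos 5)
Your proposal is correct and follows essentially the same route as the paper: establish $K$-purity via Theorem \ref{T:johnson question}(2), use Corollary \ref{C: ZG=ker} (Baldwin--Saxl) to trap $\ker(\ad)$ inside the centre of a finite-index normal $G_1$, invoke definable semisimplicity over $\CK_0$ to get finiteness, and realise $G/\ker(\ad)$ linearly via $\ad$. The only cosmetic differences are that you apply semisimplicity to $Z(G_1)$ rather than to the normal abelian subgroup $\ker(\ad)\cap G_1$, and you rebuild $G_1$ as a normal core instead of quoting the ``moreover'' clause of Corollary \ref{C: ZG=ker}; both are harmless.
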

\begin{proof}
    Let $\CK\succ \CK_0$ be a sufficiently saturated extension. By Theorem \ref{T:johnson question} (2), $G(\CK)$ must be  $K$-pure for otherwise it has a definable infinite normal abelian subgroup.  
    
    We consider $\ad:G\to \gl_n(K)$ and claim that $\ker(\ad)$ must be finite.
    Indeed, by Corollary \ref{C: ZG=ker}, there exists a  definable  (finite index) subgroup $G_1\trianglelefteq G$ such that $\ker(\ad)\cap G_1\sub Z(G_1(K))$. In particular, $\ker(\ad)\cap G_1$ is abelian. It is normal as the intersection of two normal groups,  hence, by our assumptions, must be finite. Since $G_1$ has finite index in $G$ it follows that $\ker(\ad)$ is finite, as claimed. Finally, the group $G/\ker(\ad)$ is $K$-linear and the homomorphism is defined over $K_0$.
\end{proof}

We isolate from the proof the following useful observation: 
\begin{lemma}\label{L: finite ker}
    Under the same assumptions, if $G$ is a definable, definably semisimple group then $G$ is  $K$-pure and $\ker(\ad)$ is finite. 
\end{lemma}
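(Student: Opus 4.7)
The plan is to extract the argument essentially verbatim from the proof of Theorem \ref{T:main}, where both conclusions are established as intermediate steps. The two assertions — $K$-purity of $G$ and finiteness of $\ker(\ad)$ — are proved in sequence, each relying on a different tool from the preceding section.

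For $K$-purity, I would argue by contradiction. If $G$ were not $K$-pure, then Theorem \ref{T:johnson question}(2) would supply a $\CK_0$-definable infinite normal abelian subgroup $N\trianglelefteq G$, directly contradicting the assumption that $G$ is definably semisimple. Hence $G$ is $K$-pure. Note also that this forces $\dim(G)>0$: if $G$ were infinite and $0$-dimensional, then $G$ itself would be an infinite $0$-dimensional definable subset of itself, which by Lemma \ref{l: pure 0dim} would contradict $K$-purity.

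For the finiteness of $\ker(\ad)$, I would apply Corollary \ref{C: ZG=ker} (now legitimate since $G$ is $K$-pure and of positive dimension): there exists a definable finite-index normal subgroup $G_1\trianglelefteq G$ such that
\[
\ker(\ad\restriction G_1)\;=\;\ker(\ad)\cap G_1\;\leq\;Z(G_1).
\]
The group $\ker(\ad)\cap G_1$ is definable, abelian (being contained in $Z(G_1)$), and normal in $G$ since it is the intersection of two normal subgroups of $G$ — namely $\ker(\ad)$, which is normal as the kernel of the group homomorphism $\ad$, and $G_1$. By definable semisimplicity of $G$, this definable normal abelian subgroup must be finite.

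The final step is purely group-theoretic: since $G_1$ has finite index in $G$, so does $G_1\cap\ker(\ad)$ in $\ker(\ad)$, and the latter intersection is finite; therefore $\ker(\ad)$ itself is finite. I do not anticipate any serious obstacle here — the whole statement is a repackaging of two fragments already proved in Section~3 and in the proof of Theorem \ref{T:main}, and the only mild point that needs verification is the normality of $\ker(\ad)\cap G_1$ in all of $G$ (not just in $G_1$), which is immediate from normality of the two intersected subgroups.
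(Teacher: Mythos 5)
Your proposal is correct and follows the paper's own argument essentially verbatim: $K$-purity from Theorem \ref{T:johnson question}(2) plus definable semisimplicity, then Corollary \ref{C: ZG=ker} to get $\ker(\ad)\cap G_1\leq Z(G_1)$, finiteness of this normal abelian intersection by semisimplicity, and finiteness of $\ker(\ad)$ via the finite index of $G_1$. The paper indeed isolates this lemma from the proof of Theorem \ref{T:main}, so there is nothing to add.
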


As a special case we get: 
\begin{corollary}
    If a group $G$, definable in a $p$-adically closed field $\CK_0$,  is definably simple then it is definably isomorphic to a $K_0$-linear group. 
\end{corollary}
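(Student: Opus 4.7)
The plan is to reduce the corollary almost immediately to Theorem \ref{T:main} (or, more directly, to Lemma \ref{L: finite ker}), by observing that definable simplicity forces definable semisimplicity. I would assume, as is natural and as the paper's definition of ``definably semisimple'' tacitly does, that $G$ is infinite; the finite case is handled trivially by any faithful embedding into $\gl_n(K_0)$ for suitable $n$.

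First, I would note that if $G$ has no proper non-trivial definable normal subgroup, then in particular it has no infinite definable normal abelian one, so $G$ meets the hypothesis of Theorem \ref{T:main}. Applying that theorem, there exists a finite normal definable subgroup $H_0 \trianglelefteq G$, defined over the same parameters as $G$, such that $G/H_0$ is definably isomorphic to a definable $K_0$-linear group. Since $G$ is infinite and $H_0$ is finite, $H_0$ is a proper definable normal subgroup of $G$, so by definable simplicity $H_0 = \{e\}$. Therefore $G = G/H_0$ is itself definably isomorphic to a definable $K_0$-linear group.

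A slightly more informative alternative is to invoke Lemma \ref{L: finite ker} directly: under the definably semisimple hypothesis, $G$ is $K$-pure and $\ker(\ad)$ is finite, while the adjoint representation $\ad \colon G \to \gl_n(K_0)$ is defined over $\CK_0$ (with $n = \dim G$). By definable simplicity, the finite proper normal subgroup $\ker(\ad)$ collapses to $\{e\}$, so $\ad$ is itself a definable injective homomorphism realizing $G$ as a $K_0$-linear group. There is no real obstacle here: the substantive content has already been established in Theorem \ref{T:main} and the preceding local analysis, and the corollary is simply the observation that definable simplicity kills both the finite kernel and any residual ambiguity.
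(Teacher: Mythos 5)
Your proposal is correct and follows essentially the same route as the paper: the corollary is stated there as an immediate special case of Theorem \ref{T:main} / Lemma \ref{L: finite ker}, with definable simplicity forcing the finite normal subgroup ($H_0$, respectively $\ker(\ad)$) to be trivial, exactly as you argue. Both of your variants (quotient by $H_0$ versus observing that $\ad$ itself becomes injective) are faithful to the paper's intended argument, and your tacit step "definably simple $\Rightarrow$ definably semisimple" is the same one the paper makes.
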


We also have the following.

\begin{corollary}\label{C: ss is fo}
    Let $\CK_0$ be a $p$-adically closed field and $G$ a $K_0$-definable group. Then $G(K_0)$ is definably semisimple  if and only if $G(K)$ is.
\end{corollary}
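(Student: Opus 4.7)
My plan is to prove both directions by transfer arguments between $K_0$ and $K$. The direction ``$G(K)$ definably semisimple $\Rightarrow G(K_0)$ definably semisimple'' is straightforward: if $\phi(x;b)$ defines an infinite normal abelian subgroup of $G(K_0)$ for some $b\in K_0$, then the properties ``$\phi(x;b)$ defines a normal abelian subgroup'' and, for each fixed $n\in \Nn$, ``$\phi(x;b)$ has at least $n$ elements'' are all first-order. By elementarity they transfer upward from $K_0$ to $K$, so the same formula defines an infinite normal abelian subgroup of $G(K)$, contradicting definable semisimplicity of $G(K)$.

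For the harder direction, assume $G(K_0)$ is definably semisimple; I will reduce to the case of a linear group in the valued field sort, where uniform finiteness is available. By Theorem \ref{T:main}, there exist a finite $K_0$-definable normal subgroup $H_0 \trianglelefteq G$ and a $K_0$-definable isomorphism from $G/H_0$ onto a $K_0$-linear group $H \leq \gl_n(K_0)$. By Corollary \ref{C: semisimple in quotients}, $H(K_0) \cong G(K_0)/H_0(K_0)$ is also definably semisimple. Since the sentences asserting that $|H_0| = |H_0(K_0)|$ and that the isomorphism $G/H_0 \cong H$ is a group isomorphism are first-order and true in $K_0$, they transfer to $K$: we still have $|H_0(K)| = |H_0(K_0)|$ and $G(K)/H_0(K) \cong H(K) \leq \gl_n(K)$. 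A hypothetical definable infinite normal abelian $N \trianglelefteq G(K)$ would yield $NH_0/H_0 \trianglelefteq G(K)/H_0(K) \cong H(K)$, which is still definable, normal, abelian, and infinite because $H_0$ is finite. Hence it suffices to show that $H(K)$ is definably semisimple whenever $H(K_0)$ is.

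This is the key step, and it works because $K$ is a geometric structure admitting elimination of $\exists^\infty$. For each formula $\phi(x;y)$ with $x$ a $\gl_n$-variable, both of the sets $\{y : \phi(H;y) \text{ is a normal abelian subgroup of } H\}$ and $\{y : \phi(H;y) \text{ is finite}\}$ are uniformly definable in $K$. Therefore the sentence ``for every $y$, if $\phi(H;y)$ defines a normal abelian subgroup of $H$ then it is finite'' is first-order. It holds in $K_0$ by definable semisimplicity of $H(K_0)$, so it holds in $K$ by elementarity. Since every definable subgroup of $H(K)$ is defined by some such formula $\phi(x;b)$ with $b\in K$, ranging over all formulas $\phi$ shows that every definable normal abelian subgroup of $H(K)$ is finite, so $H(K)$ is definably semisimple, completing the reduction.

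The main obstacle, and the reason a direct transfer argument fails, is precisely the point the paper emphasizes: $\CK^{eq}$ does not eliminate $\exists^\infty$, so ``$\phi(G;y)$ defines a normal abelian subgroup which is infinite'' is not, a priori, a first-order property of $y$ for an interpretable $G$. The reduction via Theorem \ref{T:main} is what pushes the question into the valued field sort, where uniform finiteness restores the needed expressive power and the elementarity argument goes through formula by formula.
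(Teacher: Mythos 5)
Your proof is correct and follows essentially the same route as the paper's: the easy direction by direct transfer, and the hard direction by applying Theorem \ref{T:main} and Corollary \ref{C: semisimple in quotients} to reduce to a $K_0$-linear group, transferring definable semisimplicity via uniform finiteness in the valued field sort, and then pulling back along the finite quotient $H_0$. Your write-up merely spells out, formula by formula, the elimination-of-$\exists^\infty$ argument that the paper states tersely.
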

\begin{proof}
    
    If $G(K)$ is definably semisimple, then so is $G(K_0)$.  So we assume that $G(K_0)$ is  definably semisimple and show that so is $G(K)$.

      By Theorem \ref{T:johnson question}(2),  $G$ is $K$-pure; so by Theorem \ref{T:main}, there exists a finite normal subgroup $H_0\trianglelefteq G$ with $G/H_0 (\CK_0)$ definably isomorphic to a $K_0$-linear group. Note that $G/H_0(\CK_0)$ is  definably semisimple by Corollary \ref{C: semisimple in quotients}. As $\CK_0$ has uniform finiteness for definable families of definable subset of $K_0^m$, for any $m$, it follows that $G/H_0(\CK)$ is definably semisimple as well. However, since $H_0$ is finite,  $G(\CK)$ is definably semisimple. 
\end{proof}

The above argument shows in particular:
\begin{corollary}
 Let $\CK_0$ be a $p$-adically closed field and $G$ a $K_0$-definable group.  If $G$ is definably semisimple, then so is $G/\ker(\ad)$.
\end{corollary}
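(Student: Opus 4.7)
The plan is to observe that this corollary is essentially a direct repackaging of ingredients already assembled. First, by Lemma \ref{L: finite ker}, the hypothesis that $G$ is definably semisimple forces $G$ to be $K$-pure and, crucially, forces $\ker(\ad)$ to be finite. Second, $\ker(\ad)$ is normal in $G$ because $\ad$ is a group homomorphism (as noted after Notation \ref{N:Ad}, via the chain rule) and its definition is independent of the auxiliary choices made to set up the local differentiable structure.

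Once these two facts are in hand, I would invoke Corollary \ref{C: semisimple in quotients}: the quotient of a definably semisimple group by a finite normal subgroup is again definably semisimple. Applying this with the finite normal subgroup $\ker(\ad)$ immediately yields that $G/\ker(\ad)$ is definably semisimple.

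There is really no obstacle here; the only thing to check is that the two hypotheses of Corollary \ref{C: semisimple in quotients} are met for the subgroup $\ker(\ad)$, and both are already recorded in the text. Note also that this is exactly the argument implicit in the proof of Theorem \ref{T:main}, where $\ker(\ad)$ played the role of the finite normal subgroup $H_0$ modulo which $G$ becomes $K$-linear; the present corollary simply isolates the semisimplicity of that quotient.
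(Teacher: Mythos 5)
Your proposal is correct and is essentially the paper's own (implicit) argument: the corollary is stated right after the remark that ``the above argument shows in particular,'' i.e.\ $\ker(\ad)$ is a finite normal subgroup by Lemma \ref{L: finite ker} (via the proof of Theorem \ref{T:main}), and Corollary \ref{C: semisimple in quotients} — which, as the paper notes, does not require saturation — then gives definable semisimplicity of $G/\ker(\ad)$.
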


\subsection{Possible generalisations}
The results presented in the present paper build heavily on our earlier works, \cite{HaHaPeGps} and \cite{HaHaPeVF} which were carried out in the context of $P$-minimal, $1$-h-minimal valued fields with definable Skolem functions. Many of the results presented here can be proven in these more general settings, but some obstacles to proving the theorem about definably semisimple groups in these settings do remain:

\begin{enumerate}
    \item The main obstacle to proving our results in  $P$-minimal expansions of $p$-adically closed fields is the one described in Remark \ref{R: general Lie}, namely the lack of the basic Lie Theory in this  setting.

    \item Many of our results can be extended to real closed valued fields and to algebraically closed fields of equi-characteristic $0$. The proof of Theorem \ref{T:johnson question}(1)  goes through almost unaltered, and an analogous  result exists  for the residue field $\bk$. Theorem \ref{T: main into}(3) is also valid in this context (but the proof is significantly different). These will appear in a subsequent paper. We do not, however, know how to extend our results  on $K/\CO$-groups to these settings. 
    The main remaining  obstacle is proving an analogue of Proposition \ref{P:not semisimple K/O} but we conjecture that such a statement is true: If $G$ is locally strongly internal to $K/\CO$ then there exists an infinite, normal, abelian subgroup $N\trianglelefteq G$ such that $\nu_{K/\CO}\vdash N$.
   
\end{enumerate}

\appendix

\section{Coordinate Projections in  $(K/\CO)^n$}\label{A:appendix}

In this section of the appendix we prove a technical result we needed in several places in the paper. Let $(K,v,\dots)$ be a finite dp-rank expansion of a $p$-adically closed.

\begin{lemma}\label{L:local homeo K/O}
If $H\leq (K/\CO)^n$ is a definable infinite subgroup, then there is a subgroup $H_1\leq H$, with $\dpr( H_1)=\dpr (H)=d$ such that $H_1$ projects invectively into some $(K/\CO)^{d}$.
\end{lemma}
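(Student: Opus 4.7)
The plan is to combine Lemma \ref{L: nu}(3) with a local analysis that identifies a coordinate projection becoming injective on a sub-ball.

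First, apply Lemma \ref{L: nu}(3) to $H$ (viewed as a $K/\CO$-group, since an infinite subgroup of $(K/\CO)^n$ is locally strongly internal to $K/\CO$). This produces a definable subgroup $X \leq H$ with $\dpr(X) = d$ together with a definable group isomorphism $f : X \to B$ onto an open ball $B \subseteq (K/\CO)^d$ centred at $0$. For each $i = 1, \ldots, n$, composing with the coordinate projection gives a definable group homomorphism $\chi_i := \pi_i \circ f^{-1} : B \to K/\CO$, and the combined map $\chi = (\chi_1, \ldots, \chi_n) : B \to (K/\CO)^n$ is injective, equalling $f^{-1}$ followed by the inclusion $X \hookrightarrow (K/\CO)^n$.

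The core step is to extract from this injection a subset $S \subseteq \{1, \ldots, n\}$ with $|S| = d$ and a sub-ball $B' \subseteq B$ around $0$ on which $(\chi_i)_{i \in S}$ is injective. Heuristically, since a generic $b \in B$ has $\dpr(b) = d$, only $d$ of the $n$ coordinates of $\chi(b)$ can carry the full rank; the remaining $n-d$ coordinates are generically determined by them. Choosing $S$ to index such $d$ coordinates makes $(\chi_i)_{i \in S}$ generically injective, and restricting to a sufficiently small sub-ball $B'$ should upgrade generic injectivity to actual injectivity. Setting $H_1 := f^{-1}(B')$ then yields a definable subgroup of $H$ of dp-rank $d$ (all open balls around $0$ in $(K/\CO)^d$ share this rank) on which $\pi_S$ is injective, completing the construction.

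The main obstacle is executing the transition from generic to honest injectivity on a sub-ball rigorously. For the $K$-sort the inverse function theorem handles this after a Jacobian computation, but in the $K/\CO$-sort an analogous tool is needed. The most promising route is a structural description of definable group homomorphisms from balls in $(K/\CO)^d$ to $K/\CO$: by the $\CO$-module structure of $K/\CO$, such homomorphisms should be locally of the form ``multiplication by a fixed element'' of a suitable localisation of $\CO$, which reduces the problem to standard linear algebra over $K$. Alternatively, one could invoke Johnson's admissible-topology manifold machinery from \cite{JohnTopQp}. Either way, this is the technical heart of the proof, which is why the statement warrants its own appendix rather than appearing as an immediate consequence of Lemma \ref{L: nu}(3).
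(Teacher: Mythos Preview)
Your proposal is circular. Lemma~\ref{L: nu}(3), in the precise form you invoke (an injective group homomorphism from a subgroup $X\le H$ onto a ball in $(K/\CO)^d$ with $d=\dpr(H)$), is established in Appendix~\ref{AS:nu}, \S\ref{Ss: K/O}, and that argument explicitly relies on Lemma~\ref{L:local homeo K/O} to pass from an embedding into $(K/\CO)^r$ (some $r$) to one into $(K/\CO)^d$. So you cannot take Lemma~\ref{L: nu}(3) as input here. Even setting this aside, you correctly identify the passage from generic injectivity of a coordinate projection to honest injectivity on a sub-ball as the ``main obstacle'', and you do not actually carry it out; the analogies you gesture at (inverse function theorem, $\CO$-linearity of homomorphisms, Johnson's manifold machinery) are not worked into a proof.

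The paper's argument is completely different and more elementary: it proceeds by a double induction, on $n$ and on $\min_i|H^i|$ where $H^i=\ker(\pi^i\restriction H)$ for the projections $\pi^i$ omitting the $i$-th coordinate. If some $H^i$ is trivial one reduces to $(K/\CO)^{n-1}$; if all $H^i$ are infinite then $\dpr(H)=n$ and the identity works. In the remaining case one uses the key fact that every nontrivial finite subgroup of $K/\CO$ contains the $p$-torsion $G_{-1}$, so $(G_{-1})^n\subseteq H$, and then the multiplication-by-$p$ map $\sigma$ strictly shrinks the finite kernels $H^i$, allowing the inner induction to proceed. No differentiability, no $\nu_D$, and no appeal to Lemma~\ref{L: nu} are needed.
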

\begin{proof}

First, note that every finite non-trivial subgroup of $K/\CO$ contains $G_{-1}:=\{g\in K/\CO:v(g)\leq -1\}$. Indeed, all torsion elements of $K/\CO$ are of order $p^m$, for some natural number $m$, and thus every finite group must contain $C_p$, the cyclic group of order $p$. The group of elements of order $p$ is exactly $G_{-1}$.

We use induction on $n$, where the case $n=1$ is trivially true.
Assume that $H\subseteq (K/\CO)^n$. For $i=1,\ldots, n$, let $\pi^i:(K/\CO)^n\to (K/\CO)^{n-1}$ denote the projection onto the remaining $n-1$ coordinates. Let $H^i=\ker(\pi^i\restriction H)$.

Notice that the group product of the $H^i$ in $(K/\CO)^n$ is a direct product, thus if all $H^i$ are infinite then $\dpr(H)=n$ and we are done. Hence, one of the $H^i$'s must be finite.
We use additional induction on $\min_i |H^i|$.

If this minimum is $1$, then one of these projections $\pi^i$ is injective on $H$ and hence it is sufficient to prove the statement for $\pi^i(H)$, and then we may finish by induction. So we may assume that $1<\min_i |H^i|<\infty$.

Assume next that all $H_i$ are finite (and non-trivial). Then they all contain $G_{-1}$.
Consider the (surjective) definable map \[\sigma:(K/\CO)^n\to (K/\CO)^n\,\, \mbox{ defined by } \sigma(x)=p\cdot x.\]
By our assumption, $(G_{-1})^n= \ker(\sigma)\sub H$.  

Let $N=\sigma(H)\sub (K/O)^n$ and $N^i:=\ker(\pi^i\rest N)$.

\begin{claim}
	For every $i$, $|N^i|<|H^i|$.
\end{claim} 

\begin{claimproof}

In fact, we shall see that $\sigma(H^i)=N^i$, with non-trivial kernel $G_{-1}$, so the result follows
Let us see that for $i=1$. Clearly, if $(x,0,\ldots,0)\in H^1$ then $(px,0,\ldots,0)\in N^i$.
Conversely, if $y=(y_1,0,\ldots, 0)\in N^1$ then there is $x=(x_1,x_2,\ldots, x_n)\in H$, such that $\sigma(x)=y$. This implies that $px_1=y_1$ and $x_2,\ldots, x_n\in G_{-1}$.
Because $G_{-1}^n\sub H$ it follows that $x'=(x_1,0,\ldots,0)$ is also in $H$, so in $H^1$,  and we have $\sigma(x')=y$ as we wanted, thus proving our claim. 
\end{claimproof}

We can now apply induction to the group $N$ and conclude that there is $N_1\sub N\sub (K/\CO)^n$ of the same dimension as $N$, and an injective projection of $N_1$ onto some $(K/\CO)^d$, for $d=\dpr N$.

We claim that in fact $N_1\sub H$ as well: Indeed, by the definition of $\sigma$ we clearly have $\sigma(N_1)\subseteq N_1$, and hence $\sigma^{-1}(\sigma(N_1))\sub \sigma^{-1}(N_1)$. However, because $\ker \sigma\subseteq H$, we have
 $\sigma^{-1}(N_1)\subseteq H$, so $N_1\subseteq \sigma^{-1}\sigma(N_1)\sub H$. 
 
 Finally, $\ker \sigma$ is finite so  $\dpr (N_1)=\dpr (H)$.
  This ends the proof when all $H^i$ are finite. 

Assume that one of the $H^i$, say $H^1$, is infinite. Let $\pi_1:(K/\CO)^n\to K^{n-1}$ be the projection on the first coordinate. Let $H_1=\ker \pi_1\cap H$. 

\begin{claim}
	$\dpr H_1=\dpr H-1$.
\end{claim}
\begin{claimproof}
	
 Indeed, for every $y\in \pi_1(H)\subseteq K/\CO$, $\dpr(\pi_1^{-1}(y)\cap H)= \dpr H_1$, so  by sub-additivity,  $\dpr H_1\geq \dpr H-1$.
Also, we have $H\supseteq H_1\oplus H^1$ (since $H_1\subseteq \{0\}\times K^{n-1}$ and $H_1\sub K/\CO\times \{\bar 0_{n-1}\}$), so since $H^1$ is infinite then $\dpr(H_1)=\dpr H-1$.
\end{claimproof}

By identifying $H_1$ with a subgroup of $(K/\CO)^{n-1}$ we may find a definable $H_1'\subseteq H_1$, $\dpr H_1'=\dpr H_1$, and a  projection, call it $\tau^k:(K/\CO)^{n-1}\to (K/\CO)^k$,  onto some $k$ coodrinates among the last $n-1$ ones, where $k=\dpr H_1$, such that $\tau^k\restriction H_1'$ is injective.

The group $H':=H_1'\oplus H^1\sub H$ has the same rank as $H$ and the projection $\pi_1\times \tau^k:(K/\CO)^n\to (K/\CO)^{k+1}$ is injective on $H'$.
\end{proof}

\section{The infinitesimal subgroups $\nu_D$}\label{AS:nu}

In this section, we prove Lemma \ref{L: nu}. As usual, $\CK$ is our $(2^{\aleph_0})^+$-saturated $p$-adically closed field, $D$ a distinguished sort, and we fix a $D$-group $G$.   At the same time, and under the same assumptions we also show: 
\begin{lemma}\label{L: Dsets}
    Let $G$ be an infinite $D$-group defined in  $\CK$. Then there exists a $D$-critical set $X\sub G$ and a definable injection $f: X\to D^m$ for $m=\dpr(X)$. 
\end{lemma}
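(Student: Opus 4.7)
The existence of a $D$-critical $X_0 \sub G$ with a definable injection $f_0 \colon X_0 \to D^n$ (some $n$) is immediate from the $D$-group assumption, since $D$-groups are by definition locally strongly internal to $D$. The task is to reduce the ambient dimension so that $n = m := \dpr(X_0)$. Concretely, it suffices to find a coordinate projection $\pi \colon D^n \to D^m$ and a definable $Y \sub f_0(X_0)$ of dp-rank $m$ on which $\pi$ is injective: then $X := f_0^{-1}(Y) \sub X_0$ is still $D$-critical (same dp-rank, hence still of maximal dp-rank), and $\pi \circ f_0|_X$ is the required injection into $D^m$. I would then split into cases according to $D$.

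For $D = K$, the image $f_0(X_0) \sub K^n$ has $\acl$-dimension $m$, since $\dpr$ and $\dim$ agree on subsets of $K^n$. Applying $p$-adic cell decomposition (Denef, Cluckers) to $f_0(X_0)$ gives a finite partition into cells of dimension at most $m$; any $m$-dimensional cell is in definable bijection with an open subset of $K^m$ via a coordinate projection, furnishing the required $\pi$ and $Y$. The case $D = \Gamma$ is analogous: by stable embeddedness (Fact \ref{F: def over Z}) we view $f_0(X_0)$ as Presburger-definable, and Cluckers' cell decomposition for Presburger arithmetic yields a cell of dp-rank $m$ that projects injectively onto $\Gamma^m$ along some coordinate projection.

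For $D = K/\CO$ the argument is more delicate and is run together with the proof of Lemma \ref{L: nu}(3). The general construction of $\nu_D$ (via the framework of \cite{HaHaPeGps}) produces a definable subgroup $X \le G$ which is $D$-critical, together with a definable injective group homomorphism $f \colon X \to (K/\CO)^n$ whose image is a subgroup. Appendix A's Lemma \ref{L:local homeo K/O} applied to $f(X)$ then furnishes a definable subgroup $H_1 \le f(X)$ of the same dp-rank on which some coordinate projection onto $(K/\CO)^m$ is injective; pulling back through $f$ yields the required $X$ and injection.

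The main obstacle is precisely the $K/\CO$ case: unlike $K$ and $\Gamma$, the sort $K/\CO$ lacks uniform finiteness and admits no clean cell decomposition, so injectivity of a coordinate projection cannot be forced on a generic definable subset of $(K/\CO)^n$. The workaround is the group-theoretic induction in Lemma \ref{L:local homeo K/O}, which exploits multiplication by $p$ and the finite $p$-torsion subgroup $G_{-1}$; this is why the proof of Lemma \ref{L: Dsets} for $D = K/\CO$ has to be combined with the construction of a $D$-critical \emph{subgroup} in Lemma \ref{L: nu}(3), rather than being derived from it a posteriori.
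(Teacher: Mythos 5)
Your proposal is correct, and for two of the three cases it is essentially the paper's own argument: for $D=\Gamma$ the paper's proof is exactly your appeal to Presburger cell decomposition (Fact \ref{F:minimal fibers in Gamma}), and for $D=K/\CO$ the paper likewise takes the definable subgroup supplied by \cite[Theorem 7.11(3)]{HaHaPeGps}, which embeds into some $((K/\CO)^r,+)$, and cuts the ambient power down to the dp-rank via Lemma \ref{L:local homeo K/O} --- including your observation that this case has to be routed through a subgroup precisely because $K/\CO$ lacks uniform finiteness and any cell decomposition. The only divergence is the case $D=K$: instead of Denef cell decomposition, the paper invokes the SW-uniformity result \cite[Proposition 4.6]{SimWal} (applicable by \cite[Example 3.3]{HaHaPeVF}), which shrinks a strongly internal set of dp-rank $n$ to one admitting a definable injection whose image is an \emph{open ball} in $K^n$. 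For Lemma \ref{L: Dsets} itself your route is perfectly adequate --- the projection of an $m$-dimensional cell onto its $m$ ``infinite'' coordinates is injective, though its image need not literally be an open subset of $K^m$ as you state, which is harmless since only injectivity is used; the paper's choice buys the ball-shaped image, and that stronger conclusion is what gets reused in Lemma \ref{L: nu}(1), which is proved simultaneously with this lemma.
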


This lemma is needed for a technical reason that we explain briefly. In order to construct the infinitesimal type-definable group $\nu_D$, we require the existence of a $D$-critical set which is a $D$-set, a term introduced in \cite{HaHaPeGps}. A $D$-set set in an interpretable group $G$  is a $D$-critical set with a map $f: X\to D^m$ witnessing it satisfying the  additional requirement that the image, $f(X)$,  has \emph{minimal fibres} (a technical property that will not be introduced here). It follows immediately from the definition (Definition 4.11, \emph{loc. cit.}) that if $m=\dpr(X)$ then $X$ is a $D$-set. Thus, the lemma will allow us to apply results from \cite{HaHaPeGps}, without getting, any further, into the fine technicalities of $D$-sets. 

The proof of both Lemma \ref{L: nu} and Lemma \ref{L: Dsets} is split according to whether $D$ is $K$, $K/\CO$ or $\Gamma$. We remind the definition of the partial type $\nu_D$:\\

For  an $A$-generic $c\in X$ (i.e. $\dpr(c/A)=\dpr(X)$), let $\nu_X(c)$ be the partial type over $K$,  consisting of all $Y\subseteq X$, satisfying that  $c\in Y$, and $Y$ is definable over some set $B$ such that $\dpr(c/B)=\dpr(X)$. In \cite[Proposition 5.8]{HaHaPeGps}, we show that $\nu_D:=\nu_X(c)c^{-1}$ is a subgroup of $G$, independent of the choice of the $D$-set $X$, the function witnessing it $f$,  or the generic point $c$. 

\begin{remark}\label{R: nu lives on any definable subgroup witnessing}
Note that if $X\subseteq G$ is a $D$-set which is also a definable subgroup then $\nu_D\vdash X$. 
\end{remark}

\subsection{The case of $D=K$}
Assume that $D=K$.

 By \cite[Example 3.3]{HaHaPeVF} the valuation on $K$ makes it an SW-uniformity (the actual definition is immaterial for us here). As a consequence, by \cite[Proposition 4.6]{SimWal}, if $f:X\to K^m$ is a definable injection with $\dpr(X)=n$ then, at the possible cost of shrinking $X$ (but not its dp-rank), there also exists a definable injection $f:X\to K^n$ with $f(X)$ an open ball. So Lemma \ref{L: Dsets} holds for $K$-groups. We proceed to the proof Lemma \ref{L: nu} for $K$-groups:

\begin{fact}\label{F: nu in K}
Let $G$ be a definable $K$-group of $K$-rank $n>0$ and let $X\subseteq G$ be a definable subset and $f:X\to K^n$ a definable injection with $n$ equal the $K$-rank of $G$. For any $c\in X$, generic over $A$, 
\[\nu_X=\{f^{-1}(U)c^{-1}:U\subseteq K^n \text{ open  containing } f(c)\}.\]
\end{fact}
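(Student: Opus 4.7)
The plan is to prove that the partial type $\nu_K = \nu_X(c) c^{-1}$ coincides with the partial type generated by the sets $\{f^{-1}(U) c^{-1} : U \subseteq K^n \text{ open},\ f(c) \in U\}$. After right-translating by $c$, this reduces to showing that $\nu_X(c)$ and the filter base $\{f^{-1}(U) : U \subseteq K^n \text{ open},\ f(c) \in U\}$ generate the same partial type. I will verify this directly by comparing realizations in a sufficiently saturated elementary extension, splitting into two matching inclusions.

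The easier direction is $(\subseteq)$: given $Y \in \nu_X(c)$, i.e.\ $Y$ is $B$-definable, $c \in Y \subseteq X$ and $\dpr(c/B) = n$, I produce an open $U \ni f(c)$ with $f^{-1}(U) \subseteq Y$. Injectivity of $f$ gives $\dpr(f(c)/B) = n$, so $f(c)$ is $B$-generic in the $B$-definable set $f(Y) \subseteq K^n$, and $\dpr(f(Y)) = n$. In a $p$-adically closed field the topological frontier of a definable subset of $K^n$ has strictly smaller dp-rank than the set itself (via cell decomposition, as used in \cite[Lemma 3.6]{HaHaPeGps}), so $f(c)$ lies in the interior of $f(Y)$. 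Any open $U \ni f(c)$ with $U \subseteq f(Y)$ then yields $f^{-1}(U) \subseteq Y$.

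The converse direction $(\supseteq)$ requires, for each open $U \ni f(c)$, exhibiting some $Y \in \nu_X(c)$ with $Y \subseteq f^{-1}(U)$. Shrink $U$ to a closed ball $B_{\geq \gamma}(f(c))$, enlarging $\gamma$ so that it is generic over $Ac$; in particular $\dpr(c/A\gamma) = n$. The key observation is that the ball can be re-centred: $B_{\geq \gamma}(f(c)) = B_{\geq \gamma}(z)$ for any $z$ inside it. In the saturated $\CK$, pick $z$ realizing the generic type of $B_{\geq \gamma}(f(c))$ over $Ac\gamma$, so $\dpr(z/Ac\gamma) = n$. Additivity of dp-rank on tuples from $K$ gives $\dpr(c,z/A\gamma) = 2n$ and hence $\dpr(c/Az\gamma) = n$, so $Y := f^{-1}(B_{\geq \gamma}(z))$ is $\{z,\gamma\}$-definable, contains $c$, satisfies the rank requirement, and equals $f^{-1}(U)$.

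The main obstacle is the re-centring step in the converse direction: a naive description of $U$ as a ball around $f(c)$ uses $f(c)$ itself as a parameter, over which $c$ has dp-rank $0$, so $f^{-1}(U)$ fails to lie in $\nu_X(c)$ with that choice of parameters. Replacing the centre by a generic point of the ball and verifying the rank condition via a short additivity computation is what allows the topological neighbourhoods of $f(c)$ to serve as a basis for the partial type $\nu_X(c)$; combined with the interior argument, this yields the claimed equality of partial types.
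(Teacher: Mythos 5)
Your argument is correct in substance, but it takes a different route from the paper: the paper's proof is essentially a one-line citation of \cite[Proposition 5.6]{HaHaPeGps}, which already identifies $\nu_X(c)$ with $f^{-1}(\mu(f(c)))$ for a generic $c$ (this is where the real work lives, done there in the general SW-uniformity setting), whereas you reprove that identification directly in the $p$-adic case: one entailment via ``generic points are interior points'' (a frontier/dimension argument), the other via the re-centering trick, replacing the centre $f(c)$ of a ball inside $U$ by a point generic in that ball so that $c$ stays generic over the new parameters. This is exactly the content hidden in the cited proposition, so your proof buys self-containedness at the cost of redoing it; the paper's version buys brevity and generality. Three small points to tighten. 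First, ``additivity of dp-rank on tuples from $K$'' is not a primitive fact (dp-rank is only subadditive in general); you should route the computation through additivity of $\dim^{eq}$ together with the identification of dp-rank with dimension for tuples from $K$ --- and note that with $\dim^{eq}$ in hand the genericity of $\gamma$ over $Ac$ is not even needed, since $\dim^{eq}(\gamma/Af(c))=0$ for any $\gamma\in\Gamma$ already gives $\dim^{eq}(f(c)/A\gamma)=n$, and then $\dpr\geq\dim$ does the rest. Second, the frontier fact you invoke for $K^n$ is not \cite[Lemma 3.6]{HaHaPeGps} (that is the $(K/\CO)^n$ statement); the $K$-version is the standard dimension-of-frontier inequality in $p$-adically closed fields, or can be extracted from the SW-uniformity facts used in Appendix B. Third, in the first direction you implicitly use that $f$ and $X$ are $B$-definable (so that $f(Y)$ is $B$-definable and $f(c)$ is $B$-generic in it); this is fine provided you take $B\supseteq A$ in the definition of $\nu_X(c)$, which is the intended reading.
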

\begin{proof}
Assume everything is defined over a parameter set $A$. By \cite[Proposition 5.6]{HaHaPeGps}, for $c\in X$ $A$-generic $\nu_X(c)=f^{-1}(\mu(f(c))$, where $\mu(f(c))$ is the infinitesimal neighbourhood of $f(c)$ in the valuation topology on $K$. The result now follows. 
\end{proof}

\subsection{The case of $D=K/\CO$} \label{Ss: K/O}
Assume that  $D=K/\CO$.

By \cite[Theorem 7.11(3)]{HaHaPeGps}, there exists a definable subgroup $H\subseteq G$ with $\dpr(H)=n$ the $K/\CO$-rank of $G$, definably isomorphic to a subgroup of $((K/\CO)^r,+)$ for some $r>0$. By Lemma \ref{L:local homeo K/O}, we may assume, replacing $H$ with a subgroup of the same dp-rank,  that $r=n$. By \cite[Lemma 3.6]{HaHaPeGps}, we may further assume that $f(H)$ is a ball centered around $0$.

\begin{lemma}\label{F: nu in K/O}
Let $f:H\to (K/\CO)^n$ be a definable injective homomorphism over $A$, and $\dpr(H)=n$  the $K/\CO$-rank of $G$. Then 

 \[\nu_{K/\CO}=\{f^{-1}(U): U\subseteq (K/\CO)^n \text{ an open ball in $(K/\CO)^n$ centred at $0$}\}.\]
\end{lemma}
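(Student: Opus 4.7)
The plan is to reduce the computation of $\nu_{K/\CO}$ to the analogue of the $K$-case (Fact \ref{F: nu in K}), exploiting the fact that $H$ is a subgroup and $f$ a group embedding, which will allow us to ``translate'' the infinitesimal from a generic point $c$ back to the identity via $c^{-1}$.

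First, I would observe that the pair $(H,f)$ is a $K/\CO$-critical $K/\CO$-set of $G$: by construction, $\dpr(H) = n$ is the $K/\CO$-rank of $G$, $f$ is a definable injection into $(K/\CO)^n$, and since $n = \dpr(H)$ equals the ambient sort-dimension of the target, $H$ automatically has minimal fibres over $f(H)$, so it is a $K/\CO$-set in the sense of \cite{HaHaPeGps}. By \cite[Proposition 5.8]{HaHaPeGps}, $\nu_{K/\CO}$ is independent of the chosen critical $D$-set, so we may compute it using $H$.

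Next, fix parameters $A$ over which $H,f$ are defined, and pick $c \in H$ generic over $A$. By the $K/\CO$-analogue of \cite[Proposition 5.6]{HaHaPeGps} (which is essentially built into the construction of $\nu_D$ and was already applied in the valued-field case), the partial type $\nu_H(c)$ is generated, modulo $H$, by the sets $f^{-1}(U')$ where $U'$ ranges over the definable open neighbourhoods of $f(c)$ in $(K/\CO)^n$; by the convention on balls in $K/\CO$, a basis at $f(c)$ is given by balls $B_{>\gamma}(f(c)) \subseteq (K/\CO)^n$ with $\gamma < \Zz$. Hence
\[
\nu_H(c) \;=\; \{\,f^{-1}(U') : U' \text{ an open ball in } (K/\CO)^n \text{ centred at } f(c)\,\}.
\]

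Finally, by the definition of $\nu_{K/\CO}$ we have $\nu_{K/\CO} = \nu_H(c)\cdot c^{-1}$. Since $f$ is a group homomorphism on $H$, for any open ball $U'$ centred at $f(c)$ the set $f^{-1}(U')\cdot c^{-1}$ equals $f^{-1}(U' - f(c))$, which is the $f$-preimage of a ball centred at $0$ of the same radius; and conversely every ball $U$ centred at $0$ arises as $U' - f(c)$ for $U' = U + f(c)$. Translating through yields
\[
\nu_{K/\CO} \;=\; \{\,f^{-1}(U) : U \text{ an open ball in } (K/\CO)^n \text{ centred at } 0\,\},
\]
as required.

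The step I would expect to require the most care is the identification of $\nu_H(c)$ with the pullback of open balls around $f(c)$; this relies on having the correct notion of ``infinitesimal neighbourhood'' in $(K/\CO)^n$ (for which the paper's convention restricting ``balls'' in $K/\CO$ to radii $<\Zz$ is crucial), and on checking that the cited Proposition 5.6 genuinely applies in this sort. Once that is in hand, the translation argument at the end is purely formal, using that $H$ is a subgroup and $f$ a homomorphism.
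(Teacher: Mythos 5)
Your overall architecture (reduce to the critical $D$-set $(H,f)$, identify $\nu_H(c)$ with pullbacks of balls around $f(c)$, then translate by $c^{-1}$ using that $f$ is a homomorphism) matches the paper's, and the final translation step is exactly what the paper does. But the middle step, which you yourself flag as the delicate one, is a genuine gap: you justify the identification
\[
\nu_H(c)=\{f^{-1}(U'): U' \text{ an open ball in } (K/\CO)^n \text{ centred at } f(c)\}
\]
by appeal to a ``$K/\CO$-analogue of Proposition 5.6 of \cite{HaHaPeGps}'', but no such statement is available: that proposition (quoted in this paper as Fact \ref{F: nu in K}) concerns the valuation topology on the field sort $K$, where balls form a definable family. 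In $K/\CO$ the balls in the relevant sense (radius $<\Zz$) are \emph{not} definable in families, and moreover a ball centred exactly at $f(c)$ is defined over parameters including $f(c)$ itself, over which $c$ is certainly not generic; so such a ball does not obviously belong to the type $\nu_H(c)$, whose defining condition is membership in $B$-definable sets with $\dpr(c/B)=\dpr(H)$. This is precisely the content that has to be proved, not cited.

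The paper fills this gap using structural results about $K/\CO$ from \cite{HaHaPeGps}: after arranging (via Lemma 3.6 there) that $f(H)$ is itself a ball, it applies Proposition 3.8 of \cite{HaHaPeGps} to find, inside any ball $U+f(c)$, a ball $Y\ni f(c)$ definable over some $B\supseteq A$ with $\dpr(f(c)/B)=n$; then $f^{-1}(Y)\in\nu_H(c)$ by definition, and $f^{-1}(Y)c^{-1}=f^{-1}(Y-f(c))\sub f^{-1}(U)$ gives $\nu_{K/\CO}\vdash f^{-1}(U)$. The reverse direction (every set in $\nu_H(c)$ contains the pullback of a ball around $f(c)$, so that the ball type translated to $c$ implies $\nu_H(c)$) again uses that full dp-rank definable subsets of $(K/\CO)^n$ contain balls around their generic points. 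To repair your argument you would need to carry out these two directions explicitly, or at least cite the genericity-preserving ball lemma (Proposition 3.8 of \cite{HaHaPeGps}) rather than a nonexistent analogue of Proposition 5.6; as written, the core of the lemma is assumed rather than proved.
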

\begin{proof} 

Let 
\[
    \nu_1:=\{f^{-1}(U): U\subseteq (K/\CO)^n \text{ an open ball in $(K/\CO)^n$ centred at $0$}\}
\]
By definition,  $\nu_{K/\CO}=\nu_H(c)c^{-1}$ for some $A$-generic $c\in H$. Let $H_1 :=f(H)\le  (K/\CO)^n$.
Since $\dpr(H_1)=n$, by \cite[Lemma 3.6]{HaHaPeGps}, we may assume, shrinking $H$ if needed, that  $H_1$ is a ball in $(K/\CO)^n$. We claim  that $\nu_{K/\CO}=\nu_1$.

Let $U\subseteq H_1$ be an open ball, $0\in U$. By \cite[Proposition 3.8]{HaHaPeGps}, there exists a ball $Y\subseteq U+f(c)$, $f(c)\in Y$, definable over some $B\supseteq A$ such that $\dpr(f(c)/B)=n$. Since $H_1$ is a subgroup, we have $Y\subseteq H_1$. Now, as $f$ is group homomorphism, $f^{-1}(Y-f(c))=f^{-1}(Y)c^{-1}\subseteq U$, $c\in f^{-1}(Y)$,  and $\dpr(c/B)=n$. Thus, by the definition of $\nu_{K/\CO}$, we have $\nu_{K/\CO}\vdash f^{-1}(U)$, so $\nu_{K/\CO}\vdash \nu_1$. 

Similarly,  $\nu_1+c\vdash \nu_{H_1}(c)$, with the desired conclusion.   

\end{proof}

This completes the proof of Lemma \ref{L: nu} and of Lemma \ref{L: Dsets} for $K/\CO$-groups. 

\subsection{The case of $D=\Gamma$}
Assume that $D=\Gamma$. By cell decomposition, \cite{ClucPresburger} (see also \cite[Fact 2.4]{OnVi} for a more explicit formulation), and the fact that dimension in Presburger arithmetic coincides with dp-rank (\cite[Theorem 0.3]{Simdp}) we have:
\begin{fact}\label{F:minimal fibers in Gamma} 
Let $Y\subseteq \Gamma^m$ be a definable subset with $\dpr(Y)=n\leq m$ then there exists a definable $Z\subseteq Y$ with $\dpr(Z)=\dpr(Y)$ projecting injectively onto a definable subset of $\Gamma^n$ of full dp-rank.
\end{fact}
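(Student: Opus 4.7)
The plan is to invoke Presburger cell decomposition and extract a cell realising the dp-rank of $Y$. By stable embeddedness of $\Gamma$ in $\CK$ (applied as in Fact \ref{F: def over Z}), the subset $Y\subseteq \Gamma^m$ is definable in the reduct $\Zz_{Pres}$ once we name the relevant parameters from $\Gamma$; we may therefore work inside Presburger arithmetic.

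Next, apply the Presburger cell decomposition of Cluckers \cite{ClucPresburger} (in the explicit form of \cite[Fact 2.4]{OnVi}): $Y$ is a finite disjoint union of cells $C_1,\dots,C_k$, where each $C_j$ has a distinguished tuple of \emph{free} coordinates $i_1<\dots<i_{d_j}$ such that on $C_j$ the remaining coordinates are given by affine functions of the free ones (modulo fixed congruence conditions) while the free coordinates themselves range over a generalised box in $\Gamma^{d_j}$ (in the sense of Definition \ref{D: boxes}). In particular the projection $\pi_j:C_j\to \Gamma^{d_j}$ onto the free coordinates is a bijection onto a generalised box $B_j$, whose dp-rank equals $d_j$ since generalised boxes in $\Gamma^{d_j}$ have full dp-rank $d_j$ (a standard consequence of the coincidence of dp-rank and algebraic dimension in Presburger \cite{Simdp}).

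Since $\dpr(Y)=n=\max_j \dpr(C_j)=\max_j d_j$, pick any cell $C_j$ with $d_j=n$ and set $Z:=C_j$. The corresponding projection $\pi_j$ is then a definable injection of $Z$ onto a generalised box in $\Gamma^n$, which is a definable subset of full dp-rank $n$; composing with a coordinate permutation if needed, we obtain the required injection $Z\hookrightarrow \Gamma^n$ with $\dpr(Z)=\dpr(Y)=n$. The only non-trivial ingredient is the cell decomposition itself, which is imported from \cite{ClucPresburger,OnVi}; once that is in place the argument is essentially bookkeeping.
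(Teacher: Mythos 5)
Your argument is essentially the paper's own route: the paper states this Fact with no further proof, citing exactly the ingredients you use (Presburger cell decomposition from \cite{ClucPresburger}, in the form of \cite[Fact 2.4]{OnVi}, plus the coincidence of dp-rank and dimension from \cite{Simdp}), and your write-up is a correct fleshing-out of that. One harmless imprecision: the base of a Presburger cell is a $(1,\dots,1)$-cell whose later bounds may depend on earlier free coordinates, so it need not be a generalised box; but this does not matter, since once the projection is injective and $\dpr(Z)=n$, the image automatically has full dp-rank $n$.
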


By using \cite[Corollary 3.7]{OnVi} and \cite[Lemma 4.2]{HaHaPeGps} repeatedly (as in the proofs of \cite[Proposition 5.6]{HaHaPeGps} and Lemma \ref{F: nu in K/O}) we get the following.
\begin{lemma}\label{L: inf neigh in Gamma}
    Let $G$ be a definable group and $g:Y\to \Gamma^n$ be a definable injection with $\dpr(Y)=n$, all $A$-definable. For any $A$-generic $c\in Y$, 
    \[\nu_Y(c)=\{g^{-1}(U): U\subseteq \Gamma^n \text{ a generalized box around $g(c)$}\}.\] 
\end{lemma}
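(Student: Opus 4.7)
The plan is to mimic, almost verbatim, the two previous cases $D=K$ (Fact \ref{F: nu in K}) and $D=K/\CO$ (Lemma \ref{F: nu in K/O}), the only change being to replace the corresponding ``balls in $K^n$'' or ``balls in $(K/\CO)^n$'' statements by the generalized-box analogues for $\Gamma^n$. The two structural results doing all the work are the one cited from \cite[Corollary 3.7]{OnVi} (which, roughly, produces generalized boxes around generic points defined over small parameters) and \cite[Lemma 4.2]{HaHaPeGps} (which lets one find a generalized box inside any definable set witnessing full dp-rank at a generic point). So the argument is essentially book-keeping, and the main step is to carefully transfer these two inputs through the injection $g$.

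For the inclusion $\nu_Y(c)\supseteq\{g^{-1}(U):U\text{ a generalized box around }g(c)\}$, I would start with an arbitrary generalized box $U\subseteq\Gamma^n$ around $g(c)$ and apply \cite[Corollary 3.7]{OnVi} (iterated over the coordinates if needed) to obtain a smaller generalized box $U'\subseteq U$ around $g(c)$ which is definable over some $B\supseteq A$ satisfying $\dpr(g(c)/B)=n$. Because $g$ is an $A$-definable injection, $\dpr(c/B)=\dpr(g(c)/B)=n$, so the $B$-definable set $g^{-1}(U')$ contains $c$ and witnesses $\dpr(c/B)=n=\dpr(Y)$. Hence $g^{-1}(U')\in\nu_Y(c)$, and therefore $g^{-1}(U)\supseteq g^{-1}(U')$ lies in $\nu_Y(c)$.

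For the reverse inclusion, take any $V\in\nu_Y(c)$, say $V$ is $B$-definable for some $B\supseteq A$ with $\dpr(c/B)=n$. Then $g(V)$ is $B$-definable, $g(c)\in g(V)\subseteq\Gamma^n$, and $\dpr(g(c)/B)=n$. Applying \cite[Lemma 4.2]{HaHaPeGps} coordinate-by-coordinate to $g(V)$ produces a generalized box $U\subseteq g(V)$ around $g(c)$, whence $g^{-1}(U)\subseteq V$. This gives the containment of partial types and completes the equality.

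The only place that might require care is the ``repeated'' application of the two external facts, since generalized boxes are products of one-dimensional Presburger cells and hence the argument is naturally inductive on the $n$ coordinates of $\Gamma^n$. This is exactly the same inductive step already carried out in the proofs of \cite[Proposition 5.6]{HaHaPeGps} and Lemma \ref{F: nu in K/O}, so once those templates are in hand there is no new obstacle; the argument runs in parallel with the $K/\CO$ case, simply replacing ``ball in $(K/\CO)^n$ around $f(c)$'' by ``generalized box in $\Gamma^n$ around $g(c)$'' throughout.
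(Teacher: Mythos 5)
Your proposal is correct and follows essentially the same route as the paper, whose proof is just the one-line instruction to apply the cited results from \cite{OnVi} and \cite{HaHaPeGps} repeatedly, following the templates of \cite[Proposition 5.6]{HaHaPeGps} and the $K/\CO$ case; your two inclusions (shrink a given box around $g(c)$ to one defined over parameters preserving genericity, and find a box around $g(c)$ inside the image of any set in $\nu_Y(c)$, pulling back via the injection $g$) are exactly the intended details. The only caveat is that your descriptions of what \cite[Corollary 3.7]{OnVi} and \cite[Lemma 4.2]{HaHaPeGps} say are guesses (and the roles may be attributed to the wrong citation), but the two structural inputs you invoke are precisely the ones used elsewhere in the paper for the $\Gamma$ sort, so the argument is sound.
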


As a result: 

\begin{lemma}\label{L: nu in Gamma}
Let $G$ be a definable $\Gamma$-group of $\Gamma$-rank $n>0$. There exists $X\subseteq G$, a $\Gamma$-critical set with $\nu_{\Gamma}\vdash X$, and $f:X\to \Gamma^n$ a definable injection satisfying:
\begin{list}{$\bullet$}{}
    \item $f(X)$ is a generalized box around $0$,
    \item $f(xy^{\pm 1})=f(x)\pm f(y)$ for any $x,y\in X$ with $xy^{\pm 1}\in X$ and
    \item $\nu_\Gamma=\{f^{-1}(U): U\subseteq \Gamma^n \text{ a generalized box around $0$}\}.$
\end{list}
\end{lemma}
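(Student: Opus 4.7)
The plan is to build a chart $f:X\to\Gamma^n$ at $e\in G$ transporting the group operation to addition on a neighbourhood of $e$. I will first take any $\Gamma$-critical chart at a generic point $c\in G$, analyse multiplication in these coordinates via cell decomposition in Presburger arithmetic, and then translate the chart back to $e$.

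By Lemma \ref{L: Dsets}, there are an $A$-definable $\Gamma$-critical $X_0\subseteq G$ and a definable injection $f_0: X_0\to\Gamma^n$ with $\dpr(X_0)=n$; by cell decomposition in Presburger combined with \cite[Fact 2.4]{OnVi} we may shrink $X_0$ so that $f_0(X_0)$ is a generalised box $B_0\subseteq\Gamma^n$. Pick $c\in X_0$ generic over $A$, write $c_0:=f_0(c)$, and consider the $Ac$-definable partial map
\[
\hat\psi:B_0\times B_0\dashrightarrow B_0, \qquad \hat\psi(u,v):=f_0\!\bigl(f_0^{-1}(u)\cdot c^{-1}\cdot f_0^{-1}(v)\bigr),
\]
defined whenever the argument lies in $X_0$. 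Applying cell decomposition at the generic point $(c_0,c_0)$ yields an $Ac$-definable cell $C$ of dp-rank $2n$ containing $(c_0,c_0)$ on which $\hat\psi$ is $\mathbb Q$-affine, i.e.\ $\hat\psi(u,v)=Au+Bv+d$ for rational $n\times n$ matrices $A,B$ and $d\in\Gamma^n$. The group identities $x\cdot c^{-1}\cdot c=x$ and $c\cdot c^{-1}\cdot y=y$ translate to $\hat\psi(u,c_0)=u$ and $\hat\psi(c_0,v)=v$ on a neighbourhood of $c_0$; substituting these into the affine form forces $A=B=I$ and $d=-c_0$, so $\hat\psi(u,v)=u+v-c_0$ on $C$.

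Next, shrink $X_0$ further (over $Ac$) to a sub-box $X_0'$ such that $(f_0(x),f_0(y))\in C$ for all $x,y\in X_0'$, set $Y:=c^{-1}X_0'$ (so $e\in Y$), and define $f:Y\to\Gamma^n$ by $f(y):=f_0(cy)-c_0$. Then $f$ is a definable injection, $f(e)=0$, the image $f(Y)=f_0(X_0')-c_0$ is a generalised box around $0$, and the affine identity from the previous step gives
\[
f(y_1y_2)=f_0(cy_1\cdot c^{-1}\cdot cy_2)-c_0=\hat\psi(f_0(cy_1),f_0(cy_2))-c_0=f(y_1)+f(y_2)
\]
whenever $y_1,y_2,y_1y_2\in Y$; the case $f(y^{-1})=-f(y)$ follows from $f(y)+f(y^{-1})=f(e)=0$. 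Finally, $\nu_\Gamma=\nu_{X_0'}(c)\cdot c^{-1}$ combined with Lemma \ref{L: inf neigh in Gamma} (which describes $\nu_{X_0'}(c)$ as the intersection of preimages under $f_0$ of generalised boxes around $c_0$) yields $\nu_\Gamma=\{f^{-1}(U):U\subseteq\Gamma^n\text{ a generalised box around }0\}$. The main delicacy lies in extracting a clean $\mathbb Q$-affine form for $\hat\psi$ from Presburger cell decomposition on a cell of full dp-rank around $(c_0,c_0)$ and arranging, via further shrinking, that all relevant pairs $(f_0(x),f_0(y))$ fall into that cell; once the affine form is available the group axioms pin down its coefficients and the remaining verifications are routine.
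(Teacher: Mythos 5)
Your overall strategy is genuinely different from the paper's: instead of quoting the earlier result that $\nu_\Gamma$ is definably isomorphic to a type-definable subgroup of $(\Gamma^r,+)$ (\cite[Theorem 7.11(2)]{HaHaPeGps}), which is what the paper uses to obtain a map already satisfying $f(xy^{\pm 1})=f(x)\pm f(y)$, you try to re-derive this local additivity from scratch by transporting multiplication through an arbitrary chart and invoking Presburger cell decomposition. That re-derivation contains a genuine gap. The map $\hat\psi$ is $Ac$-definable, and the point $(c_0,c_0)$ is $Ac$-definable as well, so $\dpr\bigl((c_0,c_0)/Ac\bigr)=0$; even over $A$ alone it has dp-rank at most $n$, not $2n$. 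Hence there is no genericity available, and no reason that the cell of an $Ac$-definable decomposition containing $(c_0,c_0)$ has dp-rank $2n$, let alone that it contains a generalized box around $(c_0,c_0)$. The tool used throughout the paper for producing boxes (\cite[Lemma 3.4]{OnVi}) applies only at points generic over the defining parameters, which is exactly what fails here: the diagonal point can sit on a boundary between cells, or in a lower-dimensional cell, e.g.\ cells of the shape $\{c_0<u<v\}$ approach $(c_0,c_0)$ but have empty intersection with the slices $\{v=c_0\}$ and $\{u=c_0\}$, so the identities $\hat\psi(u,c_0)=u$, $\hat\psi(c_0,v)=v$ give no constraint on the affine data of such a cell, and your coefficient-pinning argument does not go through. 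In effect you would have to re-prove the local linearization of the group operation, which is precisely the nontrivial content of \cite[Theorem 7.11(2)]{HaHaPeGps}.

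By contrast, the paper starts from that theorem: compactness yields a definable $X$ with $\nu_\Gamma\vdash X$ and an injection $f$ already satisfying $f(xy^{\pm1})=f(x)\pm f(y)$; the remaining work is to shrink so that $r=n$ (Fact \ref{F:minimal fibers in Gamma}), to use that $f_*\nu_\Gamma$ is a subgroup together with a generic $d$ and \cite[Lemma 3.4]{OnVi} to make the image a generalized box around $0$, and then to identify $\nu_\Gamma$ with pullbacks of boxes via Lemma \ref{L: inf neigh in Gamma}. Your closing steps (translating the chart to $e$, deducing $f(y^{-1})=-f(y)$, and the description of $\nu_\Gamma$) are fine in spirit and close to the paper's, but they all rest on the unproved affine form of $\hat\psi$ on a full box around $(c_0,c_0)$, so as written the argument does not establish the lemma.
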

\begin{proof}
    By \cite[Theorem 7.11(2)]{HaHaPeGps}, $\nu_\Gamma$ is definably isomorphic to a type-definable subgroup of $(\Gamma^r,+)$ for some $r>0$. By compactness, there is a definable subset $X\subseteq G$, $\nu_{\Gamma}\vdash X$, and a definable injection $f:X\to \Gamma^r$, such that $f(xy^{\pm 1})=f(x)\pm f(y)$ for all $x,y\in X$ with $xy^{\pm 1}\in X$.  By Fact \ref{F:minimal fibers in Gamma} we may further assume that $r=n$, the $\Gamma$-rank of $G$. 
    
    As $f_*\nu_\Gamma$ is a subgroup, by compactness there exists some $W_0\subseteq f(X)$, with $f_*\nu_\Gamma\vdash W_0$, such that $x-y\in f(X)$ for any $x,y\in W_0$. Assume everything is defined over some set $A$. Let $d\in W_0$ be an $A$-generic. By \cite[Lemma 3.4]{OnVi}, there exists a generalized box $B\subseteq W_0$ around $d$, so $B-d\subseteq f(X)$ is a generalized box around $0$. By shrinking $X$, but not its dp-rank, we may assume that $f(X)$ is a generalized box around $0$.
    
    As before, by compactness, there exists some definable subset $Y\subseteq X$, $\nu_{\Gamma}\vdash Y$, with $xy^{-1}\in X$ for any $x,y\in Y$. Assume that everything is still definable over $A$.
    
    Let $c\in Y$ be an $A$-generic. To show that \[\nu_\Gamma=\nu_X(c)c^{-1}=\{f^{-1}(U): U\subseteq \Gamma^n \text{ a generalized box around $0$}\},\] we will show they have the same realizations in some fixed sufficiently saturated elementary extension $\widehat \CK\succ \CK$. 
    
    Let $ac^{-1}\in \nu_\Gamma(\widehat \CK)$, with $a\in \nu_X(c)(\widehat \CK)$ and let $U\subseteq \Gamma^n$ be a generalized box around $0$.  By Lemma \ref{L: inf neigh in Gamma}, $a\in f^{-1}(U+f(c))$ so $f(a)\in U+f(c)$ and as $a,c\in Y$ we get that $ac^{-1}\in f^{-1}(U)$.
    
    The right-to-left inclusion is similar.
\end{proof}

This concludes the proof of Lemma \ref{L: nu} and of Lemma \ref{L: Dsets}.

\bibliographystyle{plain}
\bibliography{harvard}

\end{document}